\DeclareMathAlphabet{\mathbbold}{U}{bbold}{m}{n}
\numberwithin{equation}{section}
\theoremstyle{definition}
\newtheorem{defi}{Definition}[section]
\newtheorem{trm}[defi]{Theorem}
\newtheorem{lem}[defi]{Lemma}
\newtheorem{cor}[defi]{Corollary}
\newtheorem{prop}[defi]{Proposition}
\newcommand{\p}[1]{\mleft(#1\mright)}
\newcommand{\cp}[1]{\mleft\{#1\mright\}}
\newcommand{\comp}[2]{{\normalfont(cf. \cite[#1]{#2})}}
\newcommand{\comptwo}[4]{{\normalfont(cf. \cite[#1]{#2}, \cite[#3]{#4})}}
\newcommand{\inN}{\in\mathbb{N}}
\newcommand{\seq}[2]{\p{#1_{#2}}_{#2\inN}}
\newcommand{\conv}[2]{#1\xrightarrow{\ \, \ }#2}
\newcommand{\GH}[2]{#1\xrightarrow{\mathcal{GH}}#2}
\newcommand{\uni}[2]{#1\xrightarrow{uni.}#2}
\newcommand{\Haus}[2]{#1\xrightarrow{\ \mathcal{H}\ }#2}
\begin{document}
\begin{center}
\Large{Convergence of Riemannian 2-manifolds under a uniform curvature and contractibility bound}\\[0.5cm]
\scriptsize{TOBIAS DOTT}
\end{center}
\begin{abstract}
We consider uniformly semi-locally 1-connected sequences of closed connected Riemannian 2-manifolds. In particular, we assume that the manifolds are homeomorphic to each other and that their total absolute curvature is uniformly bounded. The purpose of this paper is a description of the Gromov-Hausdorff limits of such sequences. Our work extends earlier investigations by Burago and Shioya.    
\end{abstract}
\let\thefootnote\relax\footnotetext{\emph{2020 Mathematics Subject Classification.} Primary 51F99, 53C20, 53C45; Secondary 54F15.}
\let\thefootnote\relax\footnotetext{The author was partially supported by the DFG grant SPP 2026 (LY 95/3-2).}
\section{Introduction}
In the middle of the 20th century, Alexandrov developed the theory of \emph{surfaces with bounded curvature}. One of the central results states that every surface with bounded curvature can be obtained as the uniform limit of Riemannian 2-manifolds with uniformly bounded total absolute curvature \comp{p. 77}{Res93}.\\
From this result a natural question arises: Given a closed surface $S$ and $C>0$. What can we say about the Gromov-Hausdorff limits of Riemannian 2-manifolds $R_n$ satisfying the following properties:
\begin{itemize}
\item[1)] The space $R_n$ is homeomorphic to $S$.
\item[2)] The total absolute curvature of $R_n$ is at most $C$. 
\end{itemize}
\noindent
Burago was the first to address this question: From results in \cite[pp. 143-145]{Bur65} a geometric description for the case that $S$ is homeomorphic to $\mathbb{S}^2$ can be deduced. However, there are no proofs given in the paper and we note that our investigation is completely independent from this work. In the 1990s, Shioya gave a purely topological description of the limit spaces \comp{p. 1767}{Shi99}. Recently, the current author answered the curvature-free formulation of the question \comp{p. 2}{Dot24}.\\
The aim of the present paper is a description of the limit spaces under the following additional assumption:
\begin{itemize}
\item[3)] There is some uniform constant $\varepsilon>0$ such that the following statement applies: Every loop in $R_n$ of diameter at most $\varepsilon$ is contractible in $R_n$. \end{itemize}
We write $\mathcal{R}\p{S,C}$ for the class of all Riemannian 2-manifolds satisfying the first two conditions. If a sequence of metric spaces satisfies the third condition, then we call it \emph{uniformly semi-locally 1-connected}.\\
The third assumption is only used in order to simplify the presentation of the results. The procedure developed in \cite{Dot24} suggests a way to describe the spaces in the entire closure of $\mathcal{R}\p{S,C}$ on basis of our main result.\\
\noindent
In particular, we will address the geometric properties of the limit spaces.\\
Our description uses the language of Whyburn's  so called \emph{cyclic element theory} \comp{pp. 64-87}{Why42}: By a \emph{Peano space} $X$ we mean a compact, connected and locally connected metric space. A subset $A\subset X$ is called \emph{cyclicly connected} if every pair of points in $A$ lies on some topological 1-sphere in $A$. Provided $A$ is maximal with this property and contains more than one point, we denote $A$ as  \emph{maximal cyclic}.\\
The \emph{index} of a point $p\in X$ is defined as the number of connected components in $X\setminus\cp{p}$ and we denote it by $ind_X(p)$. A point with $ind_X(p)>1$ is called a \emph{cut point} of $X$ and we write $Cut_X$ for the set of all these points. If $p$ admits arbitrarily small open neighborhoods whose boundaries consist of exactly one point, then we say that $p$ is an \emph{endpoint} of $X$. We denote the set of all cut and endpoints of $X$ by $Sing_X$.\\
Combining Shioya's description in \cite[p. 1767]{Shi99} with that of the author in \cite[p. 12]{ Dot24}, we directly obtain the following result about the topological structure of the limit spaces:
\begin{trm}\label{trm_topo_struc}
Let $X$ be a space that can be obtained as the Gromov-Hausdorff limit of a uniformly semi-locally 1-connected sequence in $\mathcal{R}\p{S,C}$. Then the following statements apply:
\begin{itemize}
\item[1)] If $S$ is homeomorphic to $\mathbb{S}^2$, then all maximal cyclic subsets of $X$ are homeomorphic to $\mathbb{S}^2$.
\item[2)] If $S$ is not homeomorphic to $\mathbb{S}^2$, then one maximal cyclic subset of $X$ is homeomorphic to $S$ and all others are homeomorphic to $\mathbb{S}^2$.
\item[3)] Every maximal cyclic subset of $X$ contains only finitely many cut points of $X$. Only finitely many of them contain more than two cut points.
\item[4)] The following inequality holds:
\begin{align*}
\sum_{p\in X}\max\cp{ind_X(p)-2,0}\le \frac{C}{2\pi}.    
\end{align*} 
\end{itemize}
\end{trm}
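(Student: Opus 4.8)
The plan is to deduce all four assertions from the two structural descriptions already at our disposal --- Shioya's topological classification of the limit spaces \cite{Shi99} and the cyclic-element description of the closure of $\mathcal{R}\p{S,C}$ from \cite{Dot24} --- and to supply the curvature bookkeeping for the last two items through a Gauss--Bonnet argument carried out along the approximating sequence. As a preliminary I would record that $X$ is a Peano space: assumption 3) is precisely what prevents short essential loops from surviving, so local connectedness is inherited in the limit and Whyburn's cyclic element theory applies. Thus $X$ splits into its maximal cyclic subsets, glued along the points of $Cut_X$, and it remains to identify the pieces and to count the branching.

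For items 1) and 2) I would first observe that every maximal cyclic subset is a closed surface, since it is the image of a two-dimensional piece of $S$ that cannot be separated by removing a single point and, by the Peano property above, carries no extra local topology. The decisive input is again assumption 3): a collapsing loop must have diameter tending to zero, hence is eventually contractible, so only null-homotopic curves --- those bounding disks --- can be pinched in the limit. Collapsing such a curve merely splits off a sphere and leaves the complementary piece homeomorphic to $S$; no handle is ever pinched and the genus cannot be distributed among several pieces. Iterating, when $S\cong\mathbb{S}^2$ every piece is a sphere, and when $S\not\cong\mathbb{S}^2$ exactly one piece retains the topology of $S$ while all others are spheres.

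For item 4) I would localize the total absolute curvature around each branch point. Fix $p$ with $ind_X(p)=k\ge 3$. For large $n$ the $k$ local components meeting at $p$ are approximated in $R_n$ by $k$ macroscopic regions joined through a small region $U_n$ that collapses to $p$; by assumption 3) this region carries no essential handle, so up to lower-order pieces it is a sphere with $k$ disks removed and $\chi\p{U_n}=2-k$. Gauss--Bonnet then gives $\int_{U_n}\lvert K\rvert\,dA\ge 2\pi\p{k-2}$ once the turning along the degenerating boundary is controlled. Taking the regions for distinct branch points pairwise disjoint and summing yields
\begin{align*}
2\pi\sum_{p\in X}\max\cp{ind_X(p)-2,0}\le \liminf_{n\to\infty}\int_{R_n}\lvert K\rvert\,dA\le C,
\end{align*}
which is the desired inequality. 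Item 3) follows in the same spirit but uses the global Gauss--Bonnet balance: every collapsed neck or finger forces a matching quantum of total absolute curvature (a sphere bubble contributes about $4\pi$, a capped finger about $2\pi$), so a fixed maximal cyclic subset can meet only finitely many collapsing regions and hence carries only finitely many cut points; by item 4) only finitely many of these subsets can contain a point of index larger than two.

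The step I expect to be the genuine obstacle is the curvature localization behind items 3) and 4). One must attach to each limiting branch point an honest region $U_n\subset R_n$, guarantee that the regions chosen for different branch points are pairwise disjoint so that no curvature is double counted, and show that the geodesic-curvature contributions along the degenerating boundaries do not destroy the clean constant $2\pi\p{k-2}$. This is exactly where assumption 3) does the essential work: by forbidding curvature from dispersing into arbitrarily small essential loops, it forces each concentration of curvature to be pinned to a single cut point, which is what makes the summation legitimate.
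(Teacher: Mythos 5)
Your overall plan reconstructs, from scratch, results that the paper obtains by pure citation: the paper's ``proof'' of this theorem is one sentence, combining the author's curvature-free Theorem \ref{trm_topo_struc_free} from \cite{Dot24} (which gives items 1 and 2) with Shioya's description in \cite{Shi99} (which contains items 3 and 4). Measured against what those citations actually contain, your sketch has genuine gaps. For items 1) and 2), the assertion that every maximal cyclic subset is a closed surface because it ``carries no extra local topology'' is the substance of the theorem, not an observation: in the paper and in \cite{Dot24} this is extracted from Whyburn's theorem on limits of discs (Proposition \ref{prop_lim_disc}), Jakobsche's $\alpha$-approximation criterion (Theorem \ref{trm_alpha}), and the uniform local 1-connectedness established in Lemma \ref{lem_uni_simply}. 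Your ``only null-homotopic curves can be pinched'' is the right intuition behind the hypothesis, but turning it into the statement that no handle degenerates and that the genus cannot split is exactly what this machinery is for; nothing in your paragraph substitutes for it.

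For items 3) and 4) there are two problems. First, the curvature localization that you yourself flag as the obstacle is not a finishing detail but the mathematical core, and it is left unproven: producing subsurfaces $S_n\subset R_n$ collapsing to a singular point $p$ with $\chi\p{S_n}=2-ind_X(p)$ \emph{and} with boundary rotation bounded from below occupies most of Section \ref{sec_main} of the paper (Lemma \ref{lem_approx_sing_surf}, built on Lemma \ref{lem_approx_curve}, Corollary \ref{cor_approx_point}, the cylinder approximation Proposition \ref{prop_approx_cylinder}, and Burago's rotation-lifting Theorem \ref{trm_rot_lift}); without the rotation control, Gauss--Bonnet gives you nothing, since the boundary terms could swallow the constant $2\pi\p{ind_X(p)-2}$. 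So for item 4) your proposal is a correct plan --- it is in effect a sketch of the paper's proof of the stronger Proposition \ref{prop_sing_formula} --- but not a proof. Second, there is a concrete logical error in your item 3): you deduce the second half (``only finitely many maximal cyclic subsets contain more than two cut points'') from item 4), but a maximal cyclic subset can contain three or more cut points \emph{all of index two}, and such points are invisible to the sum $\sum_p\max\cp{ind_X(p)-2,0}$. The correct accounting is the one in your own first half: charge each complementary branch a curvature quantum (each endpoint of $X$ costs $2\pi$, each spherical cyclic element $4\pi$), bound the number of leaves of the cyclic-element tree by $C/2\pi$, and bound the number of branching cyclic elements by the number of leaves --- an argument about the tree structure, not about indices.
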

\noindent
For the definition of uniform convergence of metric spaces we refer to Section \ref{sec_uni_conv}. The following curvature-free result will create the needed connection between Gromov-Hausdorff and uniform convergence. In particular, it marks the starting point of our investigation. 
\begin{trm}\label{trm_loc_uni_appr}
Let $X$ be a space that can be obtained as the Gromov-Hausdorff limit of a uniformly semi-locally 1-connected sequence of length spaces that are homeomorphic to $S$. Then for every $p\in X$ we either have $p\in \overline{Cut_X}$ or the sequence $X_n$ converges uniformly to $X$ on some open neighborhood of $p$. 
\end{trm}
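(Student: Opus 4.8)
The plan is to prove the contrapositive: assuming $p\notin\overline{Cut_X}$, I will produce on a small neighborhood of $p$ the homeomorphisms with vanishing metric distortion required by the notion of uniform convergence from Section~\ref{sec_uni_conv}. The guiding principle is that a Gromov--Hausdorff limit of surfaces can degenerate only through the pinching of short curves, that the uniform semi-local $1$-connectedness forces every sufficiently short loop to be contractible, and that a contractible loop collapsing to a point produces precisely a cut point (a sphere bubbling off); hence all degeneration is concentrated in $\overline{Cut_X}$ and the surface structure survives away from it. Concretely, since $p\notin\overline{Cut_X}$ there is a radius $R>0$ with $\overline{B}\p{p,R}\cap\overline{Cut_X}=\emptyset$. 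A subregion of a Peano continuum free of cut points is cyclically connected, so by Whyburn's cyclic element theory it lies in a single maximal cyclic element; using this together with local connectedness I would fix a small radius $\rho\in\p{0,\varepsilon/4}$ for which $\overline{B}\p{p,\rho}$ is a topological closed disk $\overline D$ with Jordan boundary $\gamma=\partial D$ of diameter below $\varepsilon/2$, while the shell $B\p{p,2\rho}\setminus\overline{B}\p{p,\rho/2}$ is an annulus separating $p$ from $X\setminus B\p{p,R}$.

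Next I would transport this picture to $X_n$. Let $f_n\colon X_n\to X$ and $g_n\colon X\to X_n$ be $\epsilon_n$-approximations realizing the convergence $\GH{X_n}{X}$, with $\epsilon_n\to 0$. The loop $g_n\p{\gamma}$ has diameter close to $\operatorname{diam}\gamma<\varepsilon/2$, hence below $\varepsilon$ for large $n$, so the uniform semi-local $1$-connectedness makes it contractible in $X_n$. As $X_n$ is a closed surface, an inessential loop that has been put into general position as a simple closed curve bounds an embedded disk by the Jordan--Schoenflies theorem; I would take $D_n\subset X_n$ to be the complementary disk lying $f_n$-close to $\overline D$. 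The annular shell together with the contractibility of all nearby short loops then lets me exclude additional topology inside $D_n$: an extra handle or pinch would surface either as a short non-contractible loop, contradicting the uniform constant $\varepsilon$, or as a cut point persisting in the limit inside $B\p{p,R}$, contradicting the choice of $R$.

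With the disks $\overline D\subset X$ and $D_n\subset X_n$ matched along their boundaries, I would promote the near-isometric approximation to an honest homeomorphism $\phi_n\colon D\to D_n$ --- for example by matching a collar of $\gamma$ to a collar of $\partial D_n$ and extending across the disks by Schoenflies --- and bound its distortion $\sup_{x,y\in D}\bigl|d_{X_n}\p{\phi_n x,\phi_n y}-d_X\p{x,y}\bigr|$ by $\epsilon_n$ plus the collar error, both tending to $0$. Since the spaces are length spaces and $D$ contains no cut points, a geodesic between two nearby points of $D$ cannot leave through $\gamma$ and return, so intrinsic distances inside the disks agree with the restricted metrics up to a vanishing error and the estimate persists for the length metrics. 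This gives $\uni{X_n}{X}$ on the neighborhood $B\p{p,\rho}$ of $p$, as required.

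I expect the technical heart and the main obstacle to be the second and third steps: in the absence of any lower curvature or volume bound, promoting the Gromov--Hausdorff near-isometries to genuine homeomorphisms with vanishing distortion, and above all certifying that each preimage region $D_n$ is a single uncollapsed topological disk rather than something pinched, multiply connected, or collapsed. This is exactly where the two-dimensionality and the uniform contractibility must be combined to rule out local collapse, and making the dichotomy ``extra topology $\Rightarrow$ short essential loop or limiting cut point'' fully rigorous is the crucial and most delicate ingredient.
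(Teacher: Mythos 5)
Your overall scaffolding runs parallel to the paper's: the disc-approximation step (your transport of $\gamma$ to $X_n$ and selection of the disc $D_n$) is Lemma \ref{lem_disc_approx}, and your dichotomy ``extra topology inside $D_n$ yields a short essential loop or a limiting cut point'' is exactly the contradiction argument of Lemma \ref{lem_uni_simply}, where a sequence of small non-contractible loops collapsing to a point is shown to produce a cut point of $X$ via Lemma \ref{lem_lim_int}. (Note that the paper also needs Theorem \ref{trm_topo_struc_free} twice here, once to know that maximal cyclic subsets are surfaces so that small disc neighborhoods of $p\notin\overline{Cut_X}$ exist at all, and once inside Lemma \ref{lem_disc_approx} to decide which complementary component of $J_n$ is a disc; your appeal to cyclic element theory alone does not give that maximal cyclic elements are surfaces.)

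The genuine gap is in your final step, which you correctly identify as the heart of the matter but then dispatch with an argument that does not work: matching collars of $\gamma$ and $\partial D_n$ and ``extending across the disks by Schoenflies'' produces a homeomorphism $\phi_n\colon D\to D_n$ with \emph{no} metric control in the interior. The Schoenflies extension is purely topological; it may send the center of $D$ anywhere in $D_n$, so the distortion $\sup_{x,y}\lvert d_{X_n}\p{\phi_n x,\phi_n y}-d_X\p{x,y}\rvert$ is not bounded by $\epsilon_n$ plus a collar error. What one actually needs is a homeomorphism that is uniformly close \emph{as a map} to the Gromov--Hausdorff approximation $g_n$ on all of $D$ (then the distortion estimate is immediate), and producing such a homeomorphism from Hausdorff closeness is precisely the hard content. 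The paper gets it by combining Petersen's result --- uniform local $1$-connectedness of the discs (your Lemma \ref{lem_uni_simply}-type step) yields $\varepsilon_n$-equivalences $f_n$ with $\varepsilon_n\to 0$ and $\conv{d\p{f_n,id}}{0}$ --- with Jakobsche's two-dimensional $\alpha$-approximation theorem (Theorem \ref{trm_alpha}), which converts these controlled homotopy equivalences into homeomorphisms $\varphi_n$ with $\conv{d\p{\varphi_n,f_n}}{0}$; since Jakobsche's theorem applies to closed surfaces, the paper first doubles the discs along their boundaries (Corollary \ref{cor_uni_conv_disc}). Without this machinery, or some equally substantive substitute (e.g.\ a hands-on Whyburn-style cell-by-cell matching with diameter control), your proposal does not establish uniform convergence, so as written the proof is incomplete at exactly the step the theorem exists to supply.
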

\noindent
For our main result we need a deeper insight into the theory of surfaces with bounded curvature: The \emph{total curvature measure} on a Riemannian 2-manifold $R$ is defined by
\begin{align*}
\omega_R(A)\coloneqq\int_{A}K \,d\mathcal{H}^2 \end{align*} 
\noindent
for all Borel sets $A\subset X$ where $K$ denotes the Gaussian curvature and $\mathcal{H}^2$ the 2-dimensional Hausdorff measure on $R$. We call the total variation measure of this signed measure the \emph{total absolute curvature measure} on $R$ and denote it by $\lvert\omega_R\rvert$.\\
Let $X$ be a surface with bounded curvature. Then $X$ can be obtained as the uniform limit of Riemannian 2-manifolds $R_n$ such that the measures $\lvert\omega_{R_n}\rvert$ are uniformly bounded. The Banach-Alaoglu theorem implies that, after passing to a subsequence, the total curvature measures converge weakly to some signed measure on $X$. Indeed, the limit measure does not depend on the choice of the spaces $R_n$ \comp{p. 240-241}{AZ67} and we refer to it as the \emph{total curvature measure} $\omega_X$ on $X$.\\
The \emph{total angle} of a point $p\in X$ is defined by 
\begin{align*}
\theta_X(p)\coloneqq 2\pi-\omega_X(p).  \end{align*}
Now we are able to state the main result of this paper:
\begin{trm}\label{trm_main}
Let $X$ be a compact length space. If $X$ can be obtained as the Gromov-Hausdorff limit of a uniformly semi-locally 1-connected sequence in $\mathcal{R}\p{S,C}$, then the following statements apply: 
\begin{itemize}
\item[1)] The topological structure of $X$ is given as in Theorem \ref{trm_topo_struc}. 
\item[2)] Every maximal cyclic subset of $X$ is a closed surface with bounded curvature.
\item[3)] Let $\seq{T}{n}$ be an enumeration of the maximal cyclic subsets of $X$. Then the following inequality holds:
\begin{align*}
\sum_{n=1}^{\infty} \lvert\omega_{T_n}\rvert\p{T_n\setminus Cut_X}+\sum_{p\in Sing_X}2\pi\,\lvert ind_X(p)-2\rvert+\sum_{p\in Cut_X}\sum_{n=1}^{\infty}  \mathbbold{1}_{T_n}(p)\,\theta_{T_n}(p)\le C.    
\end{align*}
\end{itemize}
\end{trm}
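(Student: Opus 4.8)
The plan is to treat the three assertions in increasing order of difficulty. Assertion 1) is immediate: the sequence realizing $X$ satisfies the hypotheses of Theorem \ref{trm_topo_struc}, so the topological structure is as described there; in particular each maximal cyclic subset is homeomorphic to $\mathbb{S}^2$ or to $S$ and contains only finitely many cut points of $X$. For assertion 2) I would fix a maximal cyclic subset $T$ and let $p_1,\dots,p_m$ be the finitely many cut points of $X$ lying on it. On $T\setminus\cp{p_1,\dots,p_m}$ Theorem \ref{trm_loc_uni_appr} provides local uniform convergence of the approximating sequence $R_n$ to $X$, and since by Alexandrov's theory a uniform limit of Riemannian $2$-manifolds with uniformly bounded total absolute curvature is a surface with bounded curvature, $T$ carries this structure away from the $p_j$. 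It then remains to cross the finitely many points $p_j$: I would show that in the induced length metric a punctured neighborhood of $p_j$ in $T$ is isometric to a punctured cone, so that $p_j$ is a cone point of finite total angle $\theta_T(p_j)=2\pi-\omega_T\p{\cp{p_j}}$, and adjoining finitely many such cone points preserves boundedness of the curvature. This realizes each maximal cyclic subset as a closed surface with bounded curvature.

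For assertion 3) I would work directly with representatives $R_n$ and carry out a Gauss--Bonnet accounting of where the absolute curvature concentrates. For a fixed singular point $p\in Sing_X$ of index $k=ind_X(p)$ I would isolate a \emph{neck region} $N_p^n\subset R_n$ that collapses to $p$ and whose complement has $k$ components approximating the $k$ components of $X\setminus\cp{p}$; here the crucial input of uniform semi-local $1$-connectedness is that every sufficiently small loop bounds a disk, which forces $N_p^n$ to have genus $0$ and hence Euler characteristic $2-k$. Writing the remaining \emph{block regions} $B_i^n$ (one per maximal cyclic subset $T_i$, with small disks around its tips removed), the sets $\cp{B_i^n}$ and $\cp{N_p^n}$ partition $R_n$, so that by additivity $\lvert\omega_{R_n}\rvert\p{R_n}=\sum_i\int_{B_i^n}\lvert K\rvert\,d\mathcal{H}^2+\sum_p\int_{N_p^n}\lvert K\rvert\,d\mathcal{H}^2$.

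The two families of integrals produce the two groups of terms in the claimed bound. On the block side, local uniform convergence together with lower semicontinuity of the total variation under weak convergence of the curvature measures gives $\liminf_n\int_{B_i^n}\lvert K\rvert\,d\mathcal{H}^2\ge\lvert\omega_{T_i}\rvert\p{T_i\setminus Cut_X}$, which accounts for the first sum. On the neck side I would apply Gauss--Bonnet to $N_p^n$: its boundary consists of $k$ circles, and I must show that the total geodesic curvature of each, oriented as the boundary of $N_p^n$, converges to the cone angle $\theta_{T_i}(p)$ of the adjacent branch, while $2\pi\chi\p{N_p^n}=2\pi\p{2-k}$. Passing to the limit yields $\int_{N_p^n}K\,d\mathcal{H}^2\to 2\pi\p{2-k}-\sum_{i:\,p\in T_i}\theta_{T_i}(p)$, an expression that is non-positive for $k\ge2$ and equal to $2\pi$ at an endpoint, so that $\liminf_n\int_{N_p^n}\lvert K\rvert\,d\mathcal{H}^2\ge\bigl\lvert\int_{N_p^n}K\,d\mathcal{H}^2\bigr\rvert\to 2\pi\lvert k-2\rvert+\sum_{i:\,p\in T_i}\theta_{T_i}(p)$. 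Summing over all singular points and all blocks and using $\lvert\omega_{R_n}\rvert\p{R_n}\le C$ then gives the inequality, the endpoint contributions collapsing the middle sum over $Sing_X$ onto the desired form; the infinite sums are controlled because, by Theorem \ref{trm_topo_struc}, only finitely many singular points carry index greater than $2$ and the block curvatures are summable.

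I expect the neck analysis to be the main obstacle. Making the decomposition $R_n=\bigsqcup_iB_i^n\sqcup\bigsqcup_pN_p^n$ precise, verifying that uniform semi-local $1$-connectedness really does rule out hidden genus in the necks for all large $n$, and above all proving that the boundary geodesic-curvature integrals converge to the limiting cone angles $\theta_{T_i}(p)$, are the steps that require the full strength of the theory of surfaces with bounded curvature, in particular the convergence of angles under uniform convergence. The remaining difficulties --- extracting a single subsequence along which all relevant curvature measures converge weakly, and justifying the interchange of the limit with the infinite sums over blocks and singular points --- I would handle by the finiteness statements in Theorem \ref{trm_topo_struc} together with a diagonal argument.
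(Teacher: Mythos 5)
Your architecture coincides in outline with the paper's: part 1 is quoted from Theorem \ref{trm_topo_struc}; part 2 starts from local uniform convergence away from cut points (Theorem \ref{trm_loc_uni_appr}) plus Alexandrov stability under uniform convergence (Theorem \ref{trm_uni_surf_curv}); and part 3 is a Gauss--Bonnet count on subsurfaces collapsing to each singular point, with $\chi=2-ind_X(p)$ forced by uniform semi-local 1-connectedness --- which is precisely the paper's Lemma \ref{lem_approx_sing_surf} (the paper charges each neck against the atom $\lvert\omega\rvert_\infty\p{\cp{p}}$ of the weak limit of the measures $\lvert\omega_{X_n}\rvert$ rather than against a global block/neck partition of $R_n$; that is only bookkeeping). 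But two load-bearing steps are missing. First, in part 2 your way across the punctures fails as stated: a punctured neighborhood of $p_j$ in $T$ is \emph{not} isometric to a punctured cone (this is false already for smooth Riemannian metrics; points of surfaces with bounded curvature are only asymptotically conical), and, more seriously, the bounded-curvature property does not in general pass to completions --- exactly the caveat the paper states before Theorem \ref{trm_length_surf_curv}. The ingredient your plan never invokes is the uniform quadratic area growth (Proposition \ref{prop_area_growth}, obtained from Proposition \ref{prop_Riem_area} and the bound $C$): it feeds the coarea argument producing arbitrarily short curves separating $p_j$, after which one passes to shortest non-contractible loops, glues in round hemispheres, and uses the Alexandrov--Zalgaller gluing theorem plus stability under uniform convergence to conclude that the completed surface has bounded curvature. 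Without an input of this kind your completion step has no proof.

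Second, in part 3 the convergence of boundary turning you demand is both too strong and, for some branches, meaningless. At a cut point only the branches lying in the $N$ maximal cyclic subsets through $p$ carry a total angle; the remaining $ind_X(p)-N$ branches collapse to arcs in $X$ (and at an endpoint the single branch may instead contain infinitely many small maximal cyclic subsets), so there is no adjacent $\theta_{T_i}(p)$ there. For these one must manufacture simple closed curves Hausdorff-collapsing to interior points of an interval whose rotation is $\ge-2\delta$ from the side of $p_n$ (resp. $\le 2\delta$ in the endpoint case); this is where the bulk of the paper's technical effort goes --- the thin-cylinder lemmas and the first-variation argument behind Corollary \ref{cor_approx_point} --- and your proposal contains no idea for it beyond flagging ``the neck analysis'' as hard. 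For the surface branches, genuine convergence of the turning is neither needed nor what the available tools provide: the paper proves only the one-sided estimate $\tau\p{J_{i,n}}\ge\theta_{T_i}(p)-2\delta$ (Lemma \ref{lem_approx_curve}), whose engine is Burago's rotation-lifting result, Theorem \ref{trm_rot_lift}, applied in a separating cylinder of small curvature that is free of peak points; one-sided bounds suffice because Gauss--Bonnet together with $\lvert\omega_{X_n}\rvert\p{S_n}\le\lvert\omega\rvert_\infty(p)+\delta$ bounds the \emph{sum} of the turnings from above. So your skeleton is the right one, but the two decisive mechanisms --- completion across punctures via area growth, and rotation control on collapsing curves, above all over the interval branches --- are exactly the paper's main content and are absent from the plan.
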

\noindent
We denote the class of all compact length spaces meeting the aforementioned description by $\mathcal{L}\p{S,C}$.\\
Also the following converse statement holds:
\begin{trm}\label{trm_converse}
Let $X$ be a space in $\mathcal{L}\p{S,C}$. Then $X$ can be obtained as the Gromov-Hausdorff limit of a uniformly semi-locally 1-connected sequence $X_n\in\mathcal{R}\p{S,C_n}$ where $\lim\limits_{n\to\infty}C_n\le C$.      
\end{trm}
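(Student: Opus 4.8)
The plan is to reverse the degeneration analysed in Theorem \ref{trm_main}: starting from the cyclic-element decomposition of $X$, I would approximate each maximal cyclic piece by Riemannian metrics in the sense of Alexandrov and then reassemble a smooth surface homeomorphic to $S$ by joining the pieces with thin necks and branching regions placed at the cut points, arranging the neck diameters to tend to zero so that the construction collapses back onto $X$.

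First, I would invoke cyclic element theory to write $X$ as the union of its maximal cyclic subsets $\cp{T_n}$, glued along the cut points in the tree pattern prescribed by Theorem \ref{trm_topo_struc}; by hypothesis every $T_n$ is a closed surface with bounded curvature. Using the Alexandrov approximation theorem quoted in the introduction, I would fix for each $n$ a sequence of Riemannian metrics $T_n^k$ on the underlying surface with $\uni{T_n^k}{T_n}$ as $k\to\infty$ and with $\lvert\omega_{T_n^k}\rvert\p{T_n^k\setminus U}\to\lvert\omega_{T_n}\rvert\p{T_n\setminus U}$ for shrinking neighbourhoods $U$ of the finitely many cut points lying in $T_n$. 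I would moreover normalise these approximations so that near each such cut point $p$ the metric is a circular cone of total angle close to $\theta_{T_n}(p)$, which makes the subsequent gluing explicit.

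The gluing is the heart of the argument. Around a cut point $p$ of index $k=ind_X(p)$ I would excise a small metric disk from each of the $k$ incident approximated pieces and insert one connecting region homeomorphic to a $k$-holed sphere, joining its $k$ geodesic boundary circles to the excised cone circles through thin transition collars. Because attaching a sphere by such a neck realises a connected sum with $\mathbb{S}^2$, and because the gluing graph is a tree, the resulting closed surface stays homeomorphic to $S$; at an endpoint a disk cap is used instead. Making the connecting regions thin and the $T_n^k$ close to $T_n$, one checks that the glued surfaces $X_n$ satisfy $\GH{X_n}{X}$ (in fact uniformly). The curvature budget is then read off Gauss--Bonnet on each constituent: the bulk of each piece contributes $\lvert\omega_{T_n}\rvert\p{T_n\setminus Cut_X}$; the central $k$-holed sphere, of Euler characteristic $2-k$ with geodesic boundary, contributes $2\pi\lvert k-2\rvert$, matching the middle term and the $2\pi$ caps at endpoints; and a transition collar from a cone of angle $\theta_{T_n}(p)$ to its thin boundary circle contributes $\theta_{T_n}(p)$, since Gauss--Bonnet gives $\int K=-\theta_{T_n}(p)$ across it. Setting $C_n\coloneqq\lvert\omega_{X_n}\rvert\p{X_n}$ and summing, the total absolute curvature converges to the left-hand side of the inequality in Theorem \ref{trm_main}, which is at most $C$, so $\lim_{n\to\infty}C_n\le C$.

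The main obstacles I anticipate are threefold. The first is the exact curvature accounting: I must keep the topological (branching) curvature and the angular (collar) curvature sign-separated so that they add in absolute value rather than cancel, and I must show the intermediate geodesic-curvature integrals and the bulk error both vanish as the necks shrink, so that no surplus appears and the limit is exactly the budget. The second is uniform semi-local $1$-connectedness with a scale $\varepsilon$ bounded away from zero: the only short loops introduced by the construction encircle the thin necks, and each such loop bounds a disk on the far side of a sphere piece or through a hole of a branching region, hence is contractible in $X_n$, so the uniform scale is governed by the positive contractibility radius of the fixed approximations and does not degenerate. The third, since the budget only bounds an infinite sum, is that $X$ may carry infinitely many maximal cyclic subsets; I would treat this by a truncation-and-diagonal argument, handling carefully the finitely many pieces whose contribution exceeds a threshold $\eta$, replacing the remaining negligible pieces by thin tubes of vanishing curvature, and then letting $\eta\to0$.
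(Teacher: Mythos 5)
Your proposal is correct in outline and takes essentially the same route as the paper: decompose $X$ via cyclic element theory into its maximal cyclic subsets glued along a tree, reduce to finitely many pieces by a truncation/diagonal argument, approximate the pieces, and reassemble a surface homeomorphic to $S$ through thin necks and branching regions whose curvature cost is charged against the budget of Theorem \ref{trm_main} by Gauss--Bonnet, with the short loops around necks contractible because they bound discs on the all-spheres side of the tree. The paper differs only at the level of implementation: it remains in the category of surfaces with bounded curvature throughout --- resolving each cut point of index $k$ into binary wedge points by inserting stars of intervals, replacing intervals by flat cylinders with lids, and gluing flat trapezoid necks via the Alexandrov--Zalgaller gluing theorem to control $\lvert\omega_{X_n}\rvert$ --- and passes to Riemannian metrics only at the very end by Reshetnyak's approximation, whereas you smooth the pieces first and account for the collar and $k$-holed-sphere contributions by hand; both orders work, though the paper's choice lets the gluing theorem do the curvature bookkeeping that you would otherwise have to carry out explicitly.
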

\noindent
\subsection{Organization}
In Section \ref{sec_pre} we give an introduction to the convergence of metric spaces and surfaces with bounded curvature. Section \ref{sec_loc} is devoted to the proof of Theorem \ref{trm_loc_uni_appr}. The main theorem is discussed in Section \ref{sec_main} and the converse statement is shown in the last section.
\subsection{Acknowledgments}
The author is grateful to Alexander Lytchak for enlightening discussions on the problem.
\section{Preliminaries}\label{sec_pre}
\subsection{Convergence of metric spaces}
In this subsection we discuss several types of convergence for metric spaces.
\subsubsection{Uniform convergence}\label{sec_uni_conv}
As we have already seen, uniform convergence plays an important role in connection with surfaces with bounded curvature. We  now give the formal definition of this type of convergence: Let $\seq{X}{n}$ and $X$ be metric spaces. Then we say that $X_n$ \emph{converges uniformly} to $X$ if there are homeomorphisms $\varphi_n\colon X\to X_n$ such that 
\begin{align*}
\conv{dis\p{\varphi_n}\coloneqq\sup_{x,y\in X} \lvert d_X\p{x,y}-d_{X_n}\p{\varphi_n(x),\varphi_n(y)}\rvert}{0}.   
\end{align*}
We write $\uni{X_n}{X}$ for uniform convergence. Provided the topological embeddings $\varphi_n$ are only defined on some subset $U\subset X$, then we say that $X_n$ \emph{converges uniformly} to $X$ \emph{on} $U$.\\
By an $\varepsilon$-equivalence between two metric spaces $X$ and $Y$, we mean a continuous map $f\colon X\to Y$ satisfying the following properties:
\begin{itemize}
\item[1)] There is a continuous map $g\colon Y\to X$ such that $g\circ f$ and $f\circ g$ are homotopic to $id_X$ and $id_Y$ respectively.
\item[2)] There is a choice of the corresponding homotopies $H_X$ and $H_Y$ such that 
\begin{align*}
d_Y\p{f(x),f\circ H_X\p{x,t}}<\varepsilon \quad\text{and}\quad d_Y\p{y,H_Y\p{y,t}}<\varepsilon   
\end{align*}
for all $x\in X$, $y\in Y$ and $t\in\left[0,1\right]$.
\end{itemize}
As a direct consequence of Jakobsche's 2-dimensional $\alpha$-approximation result in \cite[p. 2]{Jac83}, we derive the following uniform convergence criterion for closed surfaces:
\begin{trm}\label{trm_alpha}
Let $S$ be a closed surface. Moreover let $X$ and $\seq{X}{n}$ be metric spaces that are homeomorphic to $S$. If there are $\varepsilon_n$-equivalences $f_n\colon X\to X_n$, where  $\conv{\varepsilon_n}{0}$, then $\uni{X_n}{X}$. Moreover the homeomorphisms $\varphi_n\colon X\to X_n$ corresponding to the uniform convergence can be chosen such that $\conv{d\p{\varphi_n,f_n}}{0}$.    
\end{trm}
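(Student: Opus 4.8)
The engine of the argument is Jakobsche's two-dimensional $\alpha$-approximation theorem: for a closed surface $N$ and an open cover $\alpha$ of $N$, there is a finer open cover $\beta$ such that every $\beta$-controlled homotopy equivalence $M\to N$ from a closed surface $M$ is $\alpha$-close to a homeomorphism. The definition of an $\varepsilon$-equivalence is tailored to exactly this hypothesis, with $N=X_n$ the target and $M=X$ the source. The plan is therefore to feed each $f_n$ into this machine to obtain homeomorphisms $\varphi_n\colon X\to X_n$ lying uniformly close to $f_n$, and then to convert that closeness into the distortion estimate required for the relation $\uni{X_n}{X}$.

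Concretely, I would first read off the control. For an $\varepsilon_n$-equivalence the tracks $t\mapsto H_{X_n}\p{y,t}$ stay inside the $\varepsilon_n$-ball about $y$, while the tracks $t\mapsto f_n\circ H_X\p{x,t}$ stay inside the $\varepsilon_n$-ball about $f_n(x)$; hence $f_n$ is a $\beta$-controlled homotopy equivalence for the cover $\beta$ of $X_n$ by $\varepsilon_n$-balls. Prescribing a closeness $\delta_n>0$, Jakobsche's theorem returns a required fineness, and since $\varepsilon_n\to 0$ the covers by $\varepsilon_n$-balls eventually refine it; this produces homeomorphisms $\varphi_n\colon X\to X_n$ with $d\p{\varphi_n,f_n}<\delta_n$. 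Letting $\delta_n$ decrease to zero by a diagonal choice against the $\varepsilon_n$ then gives $\conv{d\p{\varphi_n,f_n}}{0}$, which is the second assertion of the theorem.

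It remains to derive uniform convergence. Each $\varphi_n$ is a homeomorphism $X\to X_n$, and the triangle inequality gives $dis\p{\varphi_n}\le dis\p{f_n}+2\,d\p{\varphi_n,f_n}$; since the last term tends to zero, the task reduces to controlling $dis\p{f_n}$, which is the genuinely metric content of the hypothesis and, once established, forces $\conv{dis\p{\varphi_n}}{0}$, that is $\uni{X_n}{X}$. The step I expect to be the main obstacle is precisely the \emph{uniform} invocation of Jakobsche's theorem along the sequence: the fineness-versus-closeness threshold depends a priori on the local geometry of the target, and the targets $X_n$ vary, so I must ensure these thresholds do not degenerate. The leverage is that all $X_n$ are homeomorphic to the fixed surface $S$, so the topological input is uniform; the residual metric dependence can be absorbed by passing to a subsequence and noting that, as the same reasoning applies to every subsequence, the full sequence converges uniformly. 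A subordinate point is the faithful passage between the homotopy-track control of an $\varepsilon_n$-equivalence and the open-cover language in which the $\alpha$-approximation theorem is most naturally phrased.
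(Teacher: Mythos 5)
Your route---feed each $f_n$ into Jakobsche's $\alpha$-approximation theorem to get homeomorphisms $\varphi_n$ with $d\p{\varphi_n,f_n}\to 0$, then estimate $dis\p{\varphi_n}\le dis\p{f_n}+2\,d\p{\varphi_n,f_n}$---is exactly the derivation the paper intends (the paper offers no written proof; the theorem is stated as a direct consequence of \cite[p. 2]{Jac83}). But both obstacles you flag are genuine, and your treatment closes neither. First, $dis\p{f_n}\to 0$ does \emph{not} follow from the hypotheses: the two $\varepsilon_n$-controls in the definition of an $\varepsilon_n$-equivalence are measured entirely in the target $X_n$ and constrain only the homotopy tracks, never distances. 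Concretely, let $X_n$ be $X$ with its metric multiplied by $2$ and let $f_n$ be the identity map; taking $g_n=f_n^{-1}$ and constant homotopies exhibits $f_n$ as an $\varepsilon$-equivalence for \emph{every} $\varepsilon>0$, while $dis\p{f_n}=diam(X)$ and $X_n$ certainly does not converge uniformly to $X$. So the step ``it remains to control $dis\p{f_n}$, which is the metric content of the hypothesis'' is not a reduction that can be completed: the distortion control must enter from outside the definition. In the situations where the paper invokes the theorem (Corollary \ref{cor_uni_conv_disc}, Proposition \ref{prop_approx_cylinder}) this is harmless, because there the spaces sit in a common ambient space with $\Haus{2D_n}{2D}$ and the equivalences satisfy $\conv{d\p{f_n,id}}{0}$, which gives $\lvert d(x,y)-d\p{f_n(x),f_n(y)}\rvert\le 2\,d\p{f_n,id}\to 0$ at once; a self-contained proof would have to import such an assumption explicitly.

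Second, the uniform invocation of Jakobsche's theorem, which you correctly identify as the main obstacle, is not repaired by your subsequence manoeuvre. Jakobsche's statement is for a \emph{fixed} target: for each cover $\alpha$ of $X_n$ there is a finer cover $\beta_n$, depending on the metric local contractibility of $X_n$ and not merely on its homeomorphism type, such that $\beta_n$-controlled equivalences are $\alpha$-close to homeomorphisms. That all $X_n$ are homeomorphic to $S$ gives no lower bound on these thresholds, since local contractibility can degenerate along a sequence of mutually homeomorphic metrics; hence $\conv{\varepsilon_n}{0}$ alone does not make $f_n$ eventually a $\beta_n$-equivalence. The argument ``the same reasoning applies to every subsequence, so the full sequence converges'' is circular: it presupposes that some subsequence admits the desired homeomorphisms, which is precisely what is in question, and passing to subsequences cannot manufacture a non-degenerating threshold. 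What is actually needed is quantitative uniformity of the targets---for instance uniform local $1$-connectedness in Petersen's sense, under which a controlled version of the $\alpha$-approximation theorem applies along the sequence; this is exactly the input available where the paper uses the result (Lemma \ref{lem_uni_simply} together with \cite[p. 501]{Pet93}). Without some such hypothesis, neither conclusion of the theorem is reachable by this method.
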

\subsubsection{Gromov-Hausdorff convergence}
The definition of the Gromov-Hausdorff distance and the Hausdorff  distance can be found in \cite[pp. 252, 254]{BBI01}. In the following we write $\GH{X_n}{X}$ for Gromov-Hausdorff convergence and $\Haus{A_n}{A}$ for Hausdorff convergence.\\Gromov-Hausdorff convergence of compact metric spaces can always be considered as Hausdorff convergence in some compact metric space:
\begin{prop}\label{prop_GH_H}\comp{pp. 64-65}{Gro81} Let $X_n$ be compact metric spaces and $\GH{X_n}{X}$. Then there is a compact metric space $Y$ and isometric embeddings $f_n\colon X_n\to Y$ and $f\colon X\to Y$ such that $\Haus{f_n\p{X_n}}{f(X)}$ in $Y$. 
\end{prop}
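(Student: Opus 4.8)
The plan is to realize all the spaces as isometric copies inside a single space obtained by gluing, and then to establish compactness through a total boundedness argument. Writing $\varepsilon_n\coloneqq d_{GH}\p{X_n,X}$, the hypothesis $\GH{X_n}{X}$ gives $\conv{\varepsilon_n}{0}$. By the definition of the Gromov--Hausdorff distance as an infimum of Hausdorff distances over admissible metrics, for each $n$ there is an \emph{admissible} metric $\rho_n$ on the disjoint union $X_n\sqcup X$ — that is, a metric restricting to $d_{X_n}$ on $X_n$ and to $d_X$ on $X$ — whose associated Hausdorff distance between the two pieces is at most $\varepsilon_n'\coloneqq\varepsilon_n+1/n$. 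In particular every point of $X_n$ lies within $\varepsilon_n'$ of $X$ and vice versa.

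First I would assemble these into one space. Let $Y_0\coloneqq X\sqcup\bigsqcup_{n}X_n$, where the copies of $X$ occurring in the various $\rho_n$ are all identified with one central copy. I define $\rho$ on $Y_0$ by $\rho|_{X_n\sqcup X}=\rho_n$ and, for $x\in X_n$ and $x'\in X_m$ with $m\neq n$, by routing through the central copy:
\begin{align*}
\rho\p{x,x'}\coloneqq\inf_{z\in X}\p{\rho_n\p{x,z}+\rho_m\p{z,x'}}.
\end{align*}
This is the amalgamation of metric spaces along a common closed subspace, so the gluing lemma guarantees that $\rho$ is a genuine metric; two points still want checking. Positivity of the cross-distances uses compactness of $X$: if the infimum above were zero, an infimizing sequence $z_k\in X$ would subconverge to some $z$ with $\rho_n\p{x,z}=0$, forcing $x=z$, which is impossible since $x$ and $z$ sit in different pieces of $X_n\sqcup X$. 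Moreover the amalgamation creates no shortcuts inside any $X_n\sqcup X$, because any chain detouring through a third piece $X_m$ between two points of $X$ has length at least their $d_X$-distance, by the triangle inequality for $\rho_m$. Hence $\rho$ agrees with $\rho_n$ on $X_n\sqcup X$, and the Hausdorff distance between the copies of $X_n$ and $X$ inside $\p{Y_0,\rho}$ remains at most $\varepsilon_n'$.

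The main obstacle is the compactness of the ambient space. Since a metric space is totally bounded exactly when its completion is compact, it suffices to prove that $\p{Y_0,\rho}$ is totally bounded, and I then take $Y$ to be its completion. Given $\delta>0$, I would fix a finite $\tfrac{\delta}{2}$-net $F\subset X$ (possible as $X$ is compact) and choose $N$ with $\varepsilon_n'<\tfrac{\delta}{2}$ for all $n\ge N$. For such $n$ every point of $X_n$ lies within $\tfrac{\delta}{2}$ of $X$, hence within $\delta$ of $F$, so all but finitely many pieces are covered by the $\delta$-balls around $F$. Adjoining finite $\delta$-nets of the finitely many compact pieces $X_1,\dots,X_{N-1}$ produces a finite $\delta$-net of $Y_0$, which yields total boundedness and therefore compactness of $Y$.

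Finally, the inclusions $f_n\colon X_n\hookrightarrow Y$ and $f\colon X\hookrightarrow Y$ are isometric embeddings with compact, hence closed, images, and the estimate above gives Hausdorff distance $d_H\p{f_n\p{X_n},f\p{X}}\le\varepsilon_n'\to 0$, that is $\Haus{f_n\p{X_n}}{f\p{X}}$ in $Y$. The genuinely delicate point is the uniform covering needed for total boundedness, which is precisely where the convergence $\conv{\varepsilon_n}{0}$ enters; everything else is bookkeeping about the glued metric.
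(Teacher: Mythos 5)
Your proof is correct, and since the paper gives no proof of this proposition (it only cites Gromov), the relevant comparison is with the cited standard argument, which proceeds exactly as you do: choose near-optimal admissible metrics on $X_n\sqcup X$, amalgamate them along the common copy of $X$, verify total boundedness via a net in $X$ plus the finitely many remaining compact pieces, and pass to the completion. All the delicate points (positivity of the glued metric, absence of shortcuts, the $1/n$ slack since the infimum defining the Gromov--Hausdorff distance need not be attained) are handled properly, so nothing needs to be added.
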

\noindent
This result enables us to talk about Hausdorff convergence of subsets $A_n\subset X_n$ to some subset $A\subset X$.\\
Now we consider Gromov-Hausdorff convergence under a uniform bound on the local connectivity: A sequence of metric spaces $\seq{X}{n}$ is called \emph{uniformly locally connected} if for every $\varepsilon>0$ there is $\delta>0$ and $N\inN$ such that the following statement applies for all $n\ge N$: Every pair of points  $x,y\in X_n$ with $d_{X_n}\p{x,y}<\delta$ lies in some compact connected subset of $X_n$ with diameter less than $\varepsilon$.\\
With regard to this property we present two results that go back to Whyburn. The first one deals with the convergence of decompositions:
\begin{lem}\label{lem_lim_int}\comptwo{p. 253}{BBI01}{p. 412-413}{Why35}
Let $\seq{X}{n}$ be a uniformly locally connected sequence of compact metric spaces and $\GH{X_n}{X}$. Moreover let $A_n,B_n\subset X$ be closed subsets with $A_n\cup B_n=X$. Then, after passing to a subsequence, there are $A,B\subset X$ such that $\Haus{A_n}{A}$, $\Haus{B_n}{B}$ and $\Haus{A_n\cap B_n}{A\cap B}$. If $\p{A_n\cap B_n}_{n\inN}$ is uniformly locally connected, then also the sequences $\seq{A}{n}$ and $\seq{B}{n}$.  \end{lem}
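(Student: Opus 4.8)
The plan is to prove Lemma~\ref{lem_lim_int} in two stages. The first stage establishes the existence of the Hausdorff limits $A$, $B$ and the identity $\Haus{A_n\cap B_n}{A\cap B}$; the second stage transfers the uniform local connectedness from $\p{A_n\cap B_n}_{n\inN}$ to the limits $\seq{A}{n}$ and $\seq{B}{n}$. Throughout I would use Proposition~\ref{prop_GH_H} to replace the abstract Gromov-Hausdorff convergence by Hausdorff convergence inside a fixed compact metric space $Y$, identifying each $X_n$ with its isometric image $f_n\p{X_n}\subset Y$ and $X$ with $f(X)$. This reduces everything to the study of closed subsets of the single compact space $Y$, where the Hausdorff metric on the hyperspace of closed subsets is itself compact (the Blaschke selection theorem).

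For the first stage, since the closed subsets $A_n,B_n\subset Y$ all lie in the compact hyperspace of closed subsets of $Y$, I would pass to a subsequence along which $\Haus{A_n}{A}$ and $\Haus{B_n}{B}$ for some closed $A,B\subset Y$; a further subsequence makes $A_n\cap B_n$ Hausdorff-converge to some closed set $D$. The inclusion $D\subset A\cap B$ is immediate from the definition of Hausdorff convergence, since a point approximated by points of $A_n\cap B_n$ is approximated by points of both $A_n$ and $B_n$. The reverse inclusion $A\cap B\subset D$ is the crucial place where uniform local connectedness of the sequence $A_n$ and $B_n$ (which follows from that of $X_n$, as every subsequence of a uniformly locally connected sequence remains so, and closed subsets inherit the property via the connecting sets) enters: given $z\in A\cap B$, there are $a_n\in A_n$ and $b_n\in B_n$ converging to $z$, so $d_Y\p{a_n,b_n}\to 0$, and uniform local connectedness produces connected sets of vanishing diameter joining $a_n$ to $b_n$ inside $X_n=A_n\cup B_n$; such a connecting continuum must meet $A_n\cap B_n$ because it cannot be split into the two relatively open pieces it would otherwise decompose into, forcing a point of $A_n\cap B_n$ to lie within $o(1)$ of $z$. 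This is the heart of the argument and is exactly the point imported from Whyburn's cited result.

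For the second stage, I would assume $\p{A_n\cap B_n}_{n\inN}$ is uniformly locally connected and deduce the same for $\seq{A}{n}$ (the case of $\seq{B}{n}$ being symmetric). Fix $\varepsilon>0$. Given two nearby points $x,y\in A_n$, I would connect them by a small continuum $\Gamma\subset X_n$ coming from the uniform local connectedness of $X_n$. If $\Gamma$ already lies in $A_n$ we are done; otherwise $\Gamma$ exits into $B_n\setminus A_n$ and must cross $A_n\cap B_n$ each time it leaves and re-enters $A_n$. Using the uniform local connectedness of $A_n\cap B_n$, I would replace each excursion of $\Gamma$ outside $A_n$ by a small connected subset of $A_n\cap B_n$ joining the two crossing points, thereby rerouting $\Gamma$ into a connected subset of $A_n$ whose diameter is still controlled by a single $\varepsilon$-modulus depending only on $\varepsilon$ through the two given moduli. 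Choosing $\delta$ and $N$ small enough relative to these moduli yields the required uniform local connectedness of $\seq{A}{n}$.

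The main obstacle I anticipate is the inclusion $A\cap B\subset D$ in the first stage and the rerouting construction in the second stage, both of which rest on the topological fact that a connecting continuum cannot avoid the separating set $A_n\cap B_n$ while passing between $A_n$ and $B_n$. Making this quantitative, so that the diameter bounds survive passage to the Hausdorff limit uniformly in $n$, is the delicate point; it is precisely here that the hypothesis cannot be weakened from \emph{uniform} local connectedness to mere local connectedness of each individual space, and where I would lean most heavily on the cited estimates of Whyburn.
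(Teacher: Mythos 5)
Your overall route is the classical one: the paper itself gives no proof of this lemma (it is quoted from Whyburn and from Burago--Burago--Ivanov), so the comparison is with Whyburn's argument, which your two-stage plan reconstructs in outline. Stage 1 is correct: Blaschke selection in the common ambient space $Y$ from Proposition \ref{prop_GH_H} gives the limits $A$, $B$, $D$ after a subsequence, $D\subset A\cap B$ is immediate, and the reverse inclusion follows because a continuum of vanishing diameter joining $a_n\in A_n$ to $b_n\in B_n$ inside $X_n=A_n\cup B_n$ must meet $A_n\cap B_n$ (otherwise it splits into two disjoint nonempty relatively closed pieces). One genuine misstatement there: your parenthetical claim that closed subsets inherit uniform local connectedness ``via the connecting sets'' is false, since the connecting continua supplied by the hypothesis lie in $X_n$, not in $A_n$; if it were true, the second half of the lemma would be vacuous and the hypothesis on $\p{A_n\cap B_n}_{n\inN}$ superfluous. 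Fortunately your Stage 1 argument only ever uses the continua in $X_n$, so this does not damage it.

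The real gap is in Stage 2, at exactly the point you flag as delicate but then pass over. A component $U$ of $\Gamma\setminus A_n$ need not have ``two crossing points'': its frontier in $\Gamma$ is a possibly infinite subset of $A_n\cap B_n$, and there may be infinitely many such components; after replacing each excursion by a connector you must still prove that the resulting set is compact and, above all, \emph{connected}, which your sketch asserts rather than establishes (a naive per-excursion separation argument runs into trouble when infinitely many excursions of non-vanishing diameter accumulate, which a fixed $X_n$ --- not itself assumed locally connected --- permits). The standard repair avoids excursions altogether: set $M_{AB}\coloneqq\Gamma\cap A_n\cap B_n$; if $M_{AB}=\emptyset$ then $\Gamma\subset A_n$ and you are done; otherwise fix $q_0\in M_{AB}$, note every $q\in M_{AB}$ satisfies $d\p{q,q_0}\le diam\p{\Gamma}$, choose continua $C_q\subset A_n\cap B_n$ of diameter $<\varepsilon_2$ containing $q_0$ and $q$, and let $W$ be the closure of $\p{\Gamma\cap A_n}\cup\bigcup_{q\in M_{AB}}C_q$. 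Then $W\subset A_n$ is compact, contains $x$ and $y$, and has diameter at most $diam\p{\Gamma}+2\varepsilon_2$. For connectedness, suppose $W=P\sqcup Q$ is a separation with the connected set $\overline{\bigcup_q C_q}\supset M_{AB}$ contained in $P$; then $Q\subset\p{\Gamma\cap A_n}\setminus B_n=\Gamma\setminus B_n$, and writing $Q=O\cap W$ with $O$ open in the ambient space one gets $Q=O\cap\p{\Gamma\setminus B_n}$ (because $\Gamma\setminus B_n\subset\Gamma\cap A_n\subset W$), so $Q$ is open and closed in the continuum $\Gamma$, forcing $Q=\Gamma$ and contradicting $\emptyset\neq M_{AB}\subset P$. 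With this single-basepoint construction in place of your per-excursion rerouting, Stage 2 closes and your proof matches the cited source.
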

\noindent 
The second result describes the limits of closed topological discs:
\begin{prop}\label{prop_lim_disc}\comp{p. 421-422}{Why35}
Let $\seq{X}{n}$ be a uniformly locally connected sequence of metric spaces that are homeomorphic to the closed disc and $\GH{X_n}{X}$. Moreover we assume that $\p{\partial X_n}_{n\inN}$ is uniformly locally connected and $\Haus{\partial X_n}{J}$ where $diam(J)>0$. Then one maximal cyclic subset $T\subset X$ is homeomorphic to the closed disc and all others are homeomorphic to $\mathbb{S}^2$. Moreover we have $\partial T=J$.  
\end{prop}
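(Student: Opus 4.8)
The plan is to realize the convergence inside one compact space, to prove that the limit set $J$ of the boundary circles is a topological circle, and then to cap off the discs with cones so as to reduce the statement to the analogous assertion for spheres.

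By Proposition~\ref{prop_GH_H} I first pass to a subsequence and embed all $X_n$ and $X$ isometrically into a common compact metric space, so that the convergences $\Haus{X_n}{X}$ and $\Haus{\partial X_n}{J}$ take place there. Since the spaces $X_n$ are Peano continua and the sequence is uniformly locally connected, the limit $X$ is again a Peano continuum: connectedness and compactness are clear, and local connectedness is inherited from the uniform local connectedness of the sequence, by the same reasoning that underlies Lemma~\ref{lem_lim_int}. Throughout I use the standard facts of cyclic element theory \cite{Why42}: two distinct maximal cyclic subsets meet in at most one point, and the union of two cyclicly connected sets sharing at least two points is again cyclicly connected; consequently every cyclicly connected set with more than one point lies in a unique maximal cyclic subset.

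The decisive step is to show $J\cong\mathbb{S}^1$, and this is exactly where the hypothesis that $\seq{\partial X}{n}$ is uniformly locally connected enters. Choosing homeomorphisms $\gamma_n\colon\mathbb{S}^1\to\partial X_n$, uniform local connectivity means that any two points of $\partial X_n$ that are close are joined by an arc of small diameter; this forces the family $\seq{\gamma}{n}$ together with their inverses to be equicontinuous and, in particular, forbids the ``pinching'' of the boundary circle. An Arzel\`a--Ascoli argument then produces a limit parametrization $\gamma\colon\mathbb{S}^1\to J$ which is a homeomorphism, so $J$ is a simple closed curve (the hypothesis $\mathrm{diam}(J)>0$ excludes the degenerate limit). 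I expect this to be the main obstacle: dropping the boundary hypothesis would allow $J$ to degenerate to, say, a figure-eight, and the disc would split into several cyclic elements, destroying the conclusion.

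With $J\cong\mathbb{S}^1$ available I cap off. Gluing the cone $C_n\coloneqq\mathrm{Cone}\p{\partial X_n}$ to $X_n$ along $\partial X_n$ yields spheres $\hat X_n\coloneqq X_n\cup_{\partial X_n}C_n$; uniform local connectivity of $\seq{\partial X}{n}$ guarantees that the cones and hence the $\hat X_n$ stay uniformly locally connected, and Lemma~\ref{lem_lim_int}, applied to the decomposition $\hat X_n=X_n\cup C_n$ with $X_n\cap C_n=\partial X_n$, gives $\GH{\hat X_n}{\hat X}$ with $\hat X=X\cup_J C$, where $C=\mathrm{Cone}(J)$ is a closed disc and $\partial C=J$. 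Invoking the sphere analogue of the present proposition---every maximal cyclic subset of the limit of a uniformly locally connected sequence of $2$-spheres is homeomorphic to $\mathbb{S}^2$---it follows that every maximal cyclic subset of $\hat X$ is a sphere. Now $J$ lies in a unique maximal cyclic subset $\hat T$ of $\hat X$; since the disc $C$ meets $\hat T$ in $J$, the cyclic element facts force $C\subseteq\hat T$, so $T\coloneqq\hat T\cap X=\hat T\setminus\mathrm{int}(C)$ is a sphere with an open disc removed, i.e.\ a closed disc with $\partial T=J$, and a direct check identifies $T$ as the unique maximal cyclic subset of $X$ containing $J$. Finally, any other maximal cyclic subset $Q$ of $X$ is contained in a maximal cyclic subset of $\hat X$ different from $\hat T$; that subset meets $C$ in at most a cut point, hence lies in $X$ and coincides with $Q$, so $Q$ is a sphere. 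This yields exactly one disc $T$ with $\partial T=J$ and spheres otherwise. The bookkeeping in this last paragraph is routine; the real content sits in the proof that $J$ is a circle.
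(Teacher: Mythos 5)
The paper offers no proof of this proposition at all: it is quoted verbatim from Whyburn \cite[pp.\ 421--422]{Why35}, so your attempt can only be measured against the classical argument. Your architecture---first show that the limit $J$ of the boundary circles is a simple closed curve, then cap the discs by cones and reduce to the statement that uniformly locally connected (0-regular) limits of $2$-spheres have only spherical cyclic elements---is sound and in fact parallels the development in Whyburn's paper, where the sphere (``cactoid'') case is established and the disc case derived from it. Your final bookkeeping with cyclic elements (uniqueness of the maximal cyclic subset $\hat T\supset J$, $C\subset\hat T$ since $C\cap\hat T$ has more than one point, $T=\hat T\setminus C^0$ a disc by Jordan--Schoenflies, and the one-point-intersection argument showing every other maximal cyclic subset of $\hat X$ misses $C^0$) is correct.

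However, the two load-bearing steps are asserted rather than proved. First, the crux $J\cong\mathbb{S}^1$: the claim that uniform local connectedness ``forces'' the chosen homeomorphisms $\gamma_n\colon\mathbb{S}^1\to\partial X_n$ and their inverses to be equicontinuous is false as stated, since equicontinuity is destroyed by reparametrization; you must \emph{construct} normalized parametrizations (say via subdivisions into a uniformly bounded number of arcs of small diameter, which uniform local connectedness plus total boundedness of the ambient space supplies), and even then Arzel\`a--Ascoli only yields a continuous surjection $\gamma\colon\mathbb{S}^1\to J$. Injectivity---equicontinuity of the inverses---is precisely the content of the theorem: if $d\p{\gamma_n\p{s_n},\gamma_n\p{t_n}}\to 0$ with the parameters uniformly separated, uniform local connectedness gives a small arc joining the two points, which is the image of one of the two parameter arcs between $s_n$ and $t_n$; you must then exclude that your normalization collapses a long parameter arc, otherwise the limit map may be constant on arcs and nothing follows. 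This separation argument is missing. Second, the ``sphere analogue'' you invoke is nowhere available in the paper---Theorem \ref{trm_topo_struc_free} assumes uniform semi-local $1$-connectedness of length spaces, a different hypothesis---so you are trading the disc statement for an equally unproven Whyburn theorem from the same source; legitimate as a citation, circular as a proof. Finally, $\GH{\hat X_n}{\hat X}$ is mis-attributed: Lemma \ref{lem_lim_int} runs in the opposite direction (it extracts convergent decomposition pieces from an already convergent sequence; it does not show that glued spaces converge to the glued limit), and gluing the cone creates shortcuts, so $X_n$ is not isometrically embedded in $\hat X_n$; one must either build explicit correspondences for the glued metrics or note that a sublimit of $\hat X_n$ decomposes as a union of a continuous bijective (hence homeomorphic) image of $X$ and a disc meeting it in $J$. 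These repairs are routine; the gap in the circle step is not.
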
 
\noindent
\subsubsection{Measured Gromov-Hausdorff convergence}
Our notion of curvature is expressed by a measure. In this context the following notion of convergence for metric measure spaces will be useful: Let $\p{X_n,\mu_n}$ and $\p{X,\mu}$ be compact metric spaces together with finite Borel measures over them. Then we say that  $\p{X_n,\mu_n}$ Gromov-Hausdorff converges to $\p{X,\mu}$ if $\GH{X_n}{X}$ and there is a choice of the common ambient space $Y$ from Proposition \ref{prop_GH_H} such that the measures $\mu_n$ converge weakly to $\mu$ as measures over $Y$.\\
As a direct consequence of the Banach-Alaoglu theorem, we obtain the following result:
\begin{prop}
Let $\p{X_n,\mu_n}$ be compact metric spaces together with finite Borel measures over them and $\GH{X_n}{X}$. If the measures are uniformly bounded, then there is a finite Borel measure $\mu$ over $X$ such that, after passing to a subsequence, we may assume that $\GH{\p{X_n,\mu_n}}{\p{X,\mu}}$. 
\end{prop}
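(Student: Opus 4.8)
The plan is to transport the whole problem into one fixed compact ambient space and then appeal to weak-$*$ sequential compactness. First I would feed $\GH{X_n}{X}$ into Proposition \ref{prop_GH_H}, producing a compact metric space $Y$ and isometric embeddings $f_n\colon X_n\to Y$, $f\colon X\to Y$ with $\Haus{f_n\p{X_n}}{f(X)}$. Pushing the given measures forward, I set $\nu_n\coloneqq\p{f_n}_*\mu_n$; these are finite nonnegative Borel measures on $Y$ with $\nu_n(Y)=\mu_n\p{X_n}$, so the hypothesis that $\p{\mu_n}$ is uniformly bounded translates into a uniform bound on $\p{\nu_n}$.

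Since $Y$ is a compact metric space, $C(Y)$ is separable and, by the Riesz representation theorem, the $\nu_n$ form a norm-bounded sequence in $C(Y)^*$. The sequential form of the Banach-Alaoglu theorem for duals of separable spaces then furnishes a subsequence along which $\nu_n$ converges weakly-$*$ to some $\nu\in C(Y)^*$. Evaluating on nonnegative functions shows that $\nu$ is a positive functional, hence a finite nonnegative Borel measure on $Y$.

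The heart of the argument is to verify that $\nu$ is concentrated on $f(X)$, since only then can it be pulled back to a measure on $X$; this is where the Hausdorff convergence is used. Given a compact set $F\subset Y$ with $d\p{F,f(X)}>0$, the convergence $\Haus{f_n\p{X_n}}{f(X)}$ ensures that $f_n\p{X_n}$ eventually lies in a neighborhood of $f(X)$ disjoint from $F$. Choosing a continuous function that equals $1$ on $F$ and vanishes outside such a neighborhood, its integral against $\nu_n$ is $0$ for all large $n$ (as $\nu_n$ is supported on $f_n\p{X_n}$), so weak-$*$ convergence yields $\nu(F)=0$. Exhausting $Y\setminus f(X)$ by the compacta $\cp{y\in Y:d\p{y,f(X)}\ge 1/k}$ gives $\nu\p{Y\setminus f(X)}=0$. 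I expect this concentration step to be the main obstacle; the surrounding arguments are routine.

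Finally, as $f$ is a homeomorphism onto its image and $\nu$ is concentrated on $f(X)$, I would define the finite Borel measure $\mu\coloneqq\p{f^{-1}}_*\nu$ on $X$, which satisfies $\mu(X)=\nu(Y)\le\sup_n\mu_n\p{X_n}<\infty$ and $f_*\mu=\nu$. Along the chosen subsequence we then have $\p{f_n}_*\mu_n\to f_*\mu$ weakly on $Y$ together with $\Haus{f_n\p{X_n}}{f(X)}$, which is precisely the defining condition for $\GH{\p{X_n,\mu_n}}{\p{X,\mu}}$ relative to the ambient space $Y$ supplied by Proposition \ref{prop_GH_H}.
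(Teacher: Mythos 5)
Your proposal is correct and is exactly the argument the paper intends: the paper offers no written proof beyond declaring the proposition ``a direct consequence of the Banach--Alaoglu theorem,'' and the route it presupposes is precisely yours --- embed into the common compact ambient space $Y$ of Proposition \ref{prop_GH_H}, push the measures forward, extract a weak-$*$ convergent subsequence, and pull the limit back to $X$. Your concentration step (using the Hausdorff convergence $\Haus{f_n\p{X_n}}{f(X)}$ and Urysohn functions to show the limit measure vanishes off $f(X)$) correctly fills in the one detail the paper leaves implicit.
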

\subsection{Surfaces with bounded curvature}
The basics about surfaces with bounded curvature are discussed in the monograph \cite{AZ67}. In particular, we point out that the concept of a surface with bounded curvature and its curvature measure can be defined geometrically and without approximating Riemannian 2-manifolds \comp{pp. 6, 156}{AZ67}.
\subsubsection{Stability under convergence and completions}
This subsection provides results on surfaces with bounded curvature that will be useful for our investigation.\\
We start with the following convergence result:
\begin{trm}\label{trm_uni_surf_curv}\comp{pp. 88, 141, 240-241, 269}{AZ67} Let $\seq{X}{n}$ be a sequence of surfaces with bounded curvature such that the measures $\lvert\omega_{X_n}\rvert$ are uniformly bounded by $C$, $X$ be a length space and $p\in X$. Moreover let $\uni{X_n}{X}$ on some open neighborhood of $p$. We denote the topological embeddings corresponding to the uniform convergence by $\varphi_n$. Then, after passing to a subsequence, there is some $R>0$ such that the following statements apply:   
\begin{itemize}
\item[1)] $B_R(p)$ is isometric to an open subset of some surface with bounded curvature $Y$.  
\item[2)] The following inequalities hold:
\begin{itemize}
\item[a)] $\lvert\omega_Y\rvert\p{B_r(p)}\le \liminf\limits_{n\to \infty}\lvert\omega_{X_n}\rvert\p{ \varphi_n\p{B_r(p)}}$ for every $r\le R$. 
\item[b)] $ \mathcal{H}_Y^2\p{B_r(p)}\le \liminf\limits_{n\to \infty}\mathcal{H}_{X_n}^2\p{\varphi_n\p{B_r(p)}}$ for every $r\le R$.
\end{itemize}
\end{itemize}
\end{trm}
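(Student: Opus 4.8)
The plan is to deduce Theorem~\ref{trm_uni_surf_curv} from the local structure and compactness theory for surfaces with bounded curvature in \cite{AZ67}, applied on small balls around $p$. First I would fix an open neighborhood $U$ of $p$ on which $\uni{X_n}{X}$ holds, with corresponding embeddings $\varphi_n\colon U\to X_n$, and choose $R>0$ so small that $\overline{B_{2R}\p{p}}\subset U$. Since $\conv{dis\p{\varphi_n}}{0}$, each image $\varphi_n\p{B_r\p{p}}$ deviates from a genuine metric ball of $X_n$ by an amount vanishing in $n$; moreover $\varphi_n$ identifies $U$ with an open subset of the $2$-manifold $X_n$, so $B_R\p{p}$ carries an intrinsic length metric and is itself a topological $2$-manifold. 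This reduces the assertion to the local statement that the surfaces $\varphi_n\p{B_R\p{p}}$, equipped with their intrinsic metrics and with total absolute curvature still bounded by $C$, converge uniformly to the intrinsic metric induced by $X$ on $B_R\p{p}$.

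I would then invoke the central convergence theorem of Alexandrov and Zalgaller \comp{pp. 240-241}{AZ67}: a sequence of surfaces with bounded curvature whose total absolute curvatures are uniformly bounded and whose metrics converge uniformly admits, after passing to a subsequence, a limit that is again a surface with bounded curvature, for which both the signed curvature measures and the area measures converge weakly. Applied to the local pieces this produces a surface with bounded curvature $Y$ together with an isometry from $B_R\p{p}$ onto an open subset of $Y$, which is exactly statement~1). Transporting the relevant measures back to $B_R\p{p}$ by the homeomorphisms $\varphi_n^{-1}$, the theorem yields weak convergence $\p{\varphi_n^{-1}}_\ast\omega_{X_n}\to\omega_Y$ of the curvature measures and $\p{\varphi_n^{-1}}_\ast\mathcal{H}_{X_n}^2\to\mathcal{H}_Y^2$ of the area measures over $B_R\p{p}$.

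The two inequalities then follow from lower semicontinuity on open sets. For 2b) I would apply the portmanteau inequality $\mu\p{V}\le\liminf_n\mu_n\p{V}$, valid for weakly converging positive measures and open $V$, with $V=B_r\p{p}$; since $\varphi_n^{-1}$ is bijective one has $\p{\varphi_n^{-1}}_\ast\mathcal{H}_{X_n}^2\p{B_r\p{p}}=\mathcal{H}_{X_n}^2\p{\varphi_n\p{B_r\p{p}}}$, giving the claim for every $r\le R$. For 2a) the same argument is not directly available, because weak convergence of the signed measures $\omega_{X_n}$ does not entail weak convergence of their total variations; instead I would use the lower semicontinuity of total variation under weak convergence, $\lvert\omega_Y\rvert\p{V}\le\liminf_n\lvert\p{\varphi_n^{-1}}_\ast\omega_{X_n}\rvert\p{V}$ for open $V$, and combine it with the identity $\lvert\p{\varphi_n^{-1}}_\ast\omega_{X_n}\rvert=\p{\varphi_n^{-1}}_\ast\lvert\omega_{X_n}\rvert$ to obtain $\lvert\omega_Y\rvert\p{B_r\p{p}}\le\liminf_n\lvert\omega_{X_n}\rvert\p{\varphi_n\p{B_r\p{p}}}$.

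The main obstacle I anticipate lies in the reduction of the first paragraph: matching the uniform convergence used throughout this paper with the convergence hypotheses of the cited compactness theorem, and controlling the discrepancy between the ambient restricted metric and the intrinsic metric of the balls $B_r\p{p}$ uniformly as $r$ ranges up to $R$. Because curvature may concentrate on the boundaries $\partial\varphi_n\p{B_r\p{p}}$, one must ensure the liminf inequalities hold for \emph{every} $r\le R$ rather than only for almost every $r$; working with open balls is precisely what makes both lower semicontinuity estimates come out in the stated direction.
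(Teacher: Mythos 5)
Your proposal is correct and follows essentially the route the paper itself takes: the paper gives no proof of this theorem but cites exactly the chain you reconstruct from \cite{AZ67} --- stability of bounded curvature under uniform convergence (pp.~88, 141) for statement~1), weak convergence of the curvature measures (pp.~240--241) and of the areas (p.~269) for statement~2), localized to small balls around $p$. Your additional bookkeeping --- the portmanteau inequality on the open balls $B_r\p{p}$, lower semicontinuity of the total variation under weak convergence of the signed measures $\omega_{X_n}$ (rather than the false weak convergence of $\lvert\omega_{X_n}\rvert$), and the identity $\lvert\p{\varphi_n^{-1}}_\ast\omega_{X_n}\rvert=\p{\varphi_n^{-1}}_\ast\lvert\omega_{X_n}\rvert$ for the injective maps $\varphi_n^{-1}$ --- is sound and fills in precisely what the citation leaves implicit.
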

\noindent 
For our second result we need the following definition: Let $X$ be a metric space. Then we say that the \emph{area growth of open balls} in $X$ \emph{is at most quadratic} if there are $A,R>0$ such that
\begin{align*}
\mathcal{H}^2\p{B_r(x)}\le Ar^2\ \text{for every $x\in X$ and $r<R$}. 
\end{align*}
For a surface $S$ we denote the set of its interior points by $S^0$. Moreover a point $p$ in a surface with bounded curvature $X$ with $\theta_X(p)=0$ is called a \emph{peak point} of $X$.\\
In general the property of being a surface with bounded curvature does not carry over to the completion of the space. The upcoming result presents a positive example. Following Lytchak's arguments in \cite[pp. 630-631]{LW18} and \cite[pp. 7-8]{Lyt23}, we derive:      
\begin{trm}\label{trm_length_surf_curv}
Let $X$ be a length space that is homeomorphic to some closed surface and $F\subset X$ be a finite subset. Moreover let the area growth of open balls in $X$ be at most quadratic. Then the following statements apply: 
\begin{itemize}
\item[1)] $X\setminus F$ is a length space.
\item[2)] If $X\setminus F$ is a surface with bounded curvature such that the measure $\lvert \omega_{X\setminus F}\rvert$ is finite, then $X$ is also a surface with bounded curvature.  
\end{itemize}
\end{trm}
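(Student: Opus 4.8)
The plan is to treat the two assertions separately, using the quadratic area bound in both. For the first assertion I would show that the restriction of $d_X$ to $X\setminus F$ is already a length metric. Since $X$ is a topological closed surface and $F$ is finite, $X\setminus F$ is path connected, so it suffices to approximate, for $x,y\in X\setminus F$, the infimal length of $d_X$-paths by paths avoiding $F$. Given a nearly minimizing path $\gamma$ in $X$, I would reroute it around each $p\in F$ that it meets. The coarea inequality applied to the distance function $d\p{\cdot,p}$, together with the bound $\mathcal{H}^2\p{B_\eta(p)}\le A\eta^2$, yields by averaging some radius $r<\eta$ with $\mathcal{H}^1\p{\partial B_r(p)}\le A\eta$. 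As $X$ is a $2$-manifold, $\overline{B_r(p)}\setminus\cp{p}$ is path connected, so replacing the portion of $\gamma$ between its first entrance into and last exit from $B_r(p)$ by an arc within $\overline{B_r(p)}\setminus\cp{p}$ of length at most $\mathcal{H}^1\p{\partial B_r(p)}\le A\eta$ changes the length by at most $A\eta$. Summing the finitely many detours over $F$ (with the $B_r(p)$ chosen disjoint) and letting $\eta\to 0$ shows that the length of $\gamma$ is approximated arbitrarily well by paths in $X\setminus F$, so $X\setminus F$ is a length space.

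For the second assertion I would use that the property of being a surface with bounded curvature is local together with finiteness of the total absolute curvature; since $X\setminus F$ already carries such a structure with $\lvert\omega_{X\setminus F}\rvert$ finite, it remains only to understand the finitely many points of $F$ and to check that the local structures glue to a global one of finite total variation. Fix $p\in F$ and a small ball $B_R(p)$. The punctured ball $B_R(p)\setminus\cp{p}$ is a surface with bounded curvature, and because $\lvert\omega_{X\setminus F}\rvert$ is finite the tails decay, $\lvert\omega_{X\setminus F}\rvert\p{B_r(p)\setminus\cp{p}}\to 0$ as $r\to 0$. Following the arguments of Lytchak in \cite[pp. 630-631]{LW18} and \cite[pp. 7-8]{Lyt23}, I would combine this curvature decay near $p$ with the quadratic area bound to show that the tangent cone of $X$ at $p$ exists and is a flat cone of some angle $\theta_X(p)$; the area bound forces $\theta_X(p)\le 2A<\infty$, so a well-defined and finite curvature value $\omega_X(p)=2\pi-\theta_X(p)$ can be attached to $p$.

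Finally I would assemble the global curvature measure as $\omega_X=\omega_{X\setminus F}+\sum_{p\in F}\p{2\pi-\theta_X(p)}\delta_p$, which has finite total variation since $\lvert\omega_{X\setminus F}\rvert$ is finite and $F$ contributes only finitely many finite atoms. Knowing that each $p$ is an honest cone point then permits filling the punctures: either one invokes the removable-singularity theory for surfaces with bounded curvature in \cite{AZ67}, or one constructs Riemannian metrics on $X$ converging uniformly to it with total absolute curvature bounded by $\lvert\omega_X\rvert(X)+o(1)$, obtained by smoothing the bounded-curvature structure on $X\setminus F$ and attaching model cones of angle $\theta_X(p)$ near each puncture; by the characterization of surfaces with bounded curvature as uniform limits of Riemannian $2$-manifolds with uniformly bounded total absolute curvature, this identifies $X$ as such a surface. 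I expect the main obstacle to be the middle step, namely rigorously establishing that each $p$ is a cone point of finite angle. This amounts to controlling the metric near the puncture — the perimeters of small circles and their turning — purely in terms of the curvature measure, via a Gauss–Bonnet estimate in the punctured disc and a passage to the limit $r\to 0$, with the quadratic area bound entering precisely to exclude degenerate, infinite-angle singularities.
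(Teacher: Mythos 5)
Your treatment of part 1 is in substance the same as the paper's: both extract, from the quadratic area bound via the coarea inequality, level curves of arbitrarily small $\mathcal{H}^1$-measure around each $p\in F$ and use them to detour paths around the puncture. One real (though repairable) omission: path-connectedness of $\overline{B_r(p)}\setminus\{p\}$ does not by itself produce a detour of length at most $\mathcal{H}^1\p{\partial B_r(p)}$ --- the level set need not be connected, the entrance and exit points of your path may lie on different components, and an arc passing through the open ball carries no length control. What is needed, and what the paper imports from \cite[p. 623]{LW18}, is the selection of a rectifiable \emph{simple closed curve} $J_n$ inside the level set that separates $p$ from the rest of the disc; since a nearly minimizing path must cross $J_n$ both entering and leaving, the detour can be taken inside the connected curve $J_n$, whose length is controlled by its $\mathcal{H}^1$-measure.

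In part 2 there is a genuine gap, and it sits exactly where you locate it: the existence of a flat tangent cone of finite angle $\theta_X(p)$ at the puncture. You cannot quote this --- $p$ is not yet known to be a point of a surface with bounded curvature, so the tangent-cone theory of \cite{AZ67} does not apply at $p$, and there is no off-the-shelf removable-singularity theorem there to invoke; establishing the cone structure from scratch (perimeter and turning estimates for small circles via Gauss--Bonnet and a limit $r\to 0$) is the hard analytic content, which your proposal defers rather than supplies. Note also that your bound $\theta_X(p)\le 2A$ presupposes the cone already exists, and that your fallback construction --- smoothing plus attaching model cones of angle $\theta_X(p)$ --- degenerates when $\theta_X(p)=0$: cusps (peak points, $\omega(p)=2\pi$) are compatible with both the finiteness of $\lvert\omega_{X\setminus F}\rvert$ and the quadratic area bound, so they cannot be excluded. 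The paper's proof avoids tangent cones altogether: from the short curves $J_n$ of part 1 it takes a shortest non-contractible loop $\gamma_n$ in the cylinder between $J=\partial D$ and $J_n$; this $\gamma_n$ is a simple closed curve bounding a convex disc $A_n$ with $p\in A_n^0$ and $\lim_{n\to\infty}diam\p{A_n}=0$, and --- using only the finiteness of $\lvert\omega_{X\setminus F}\rvert$, after passing to a subsequence --- $\gamma_n$ is free of peak points and has uniformly bounded absolute rotation measured from outside $A_n$. Gluing a round hemisphere of matching boundary length onto $X\setminus A_n^0$ with its induced length metric and applying the gluing theorem of \cite[p. 289]{AZ67} produces surfaces with bounded curvature $X_n$ with uniformly bounded $\lvert\omega_{X_n}\rvert$ and $\uni{X_n}{X}$, whence $X$ has bounded curvature by stability under uniform convergence \cite[pp. 88, 141]{AZ67}. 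To rescue your route you would in effect have to carry out the cone analysis in full, including the cusp case --- considerably more work than this cut-and-cap compactness argument.
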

\begin{proof}  
For the sake of simplicity, we assume that $F=\cp{x}$.\\
1) We follow the arguments in \cite[pp. 630-631]{LW18}: First we choose some closed topological disc $D\subset X$ with $x\in D^0$. Moreover let $y\in J\coloneqq\partial D$.\\
For sufficiently small $r>0$ we have $S_r\coloneqq\partial S_r\subset D^0$ and that $S_r$ separates $x$ and $y$ in $D$. Using the bound on the area growth, the coarea formula \comp{p. 603}{LW18} implies that $\liminf\limits_{r\to 0}\mathcal{H}^1\p{S_r}=0$. Hence there is a sequence $\conv{r_n}{0}$ and simple closed curves $J_n\subset S_{r_n}$ with $\lim\limits_{n\to\infty}length\p{J_n}=0$ such that $J_n$ separates $x$ and $y$ in $D$ \comp{p. 623}{LW18}. In particular, $J_n$ bounds a closed topological disc $D_n$ with $x\in D_n^0$.\\
We conclude that $X\setminus F$ is a length space.\\        2) We follow the arguments in \cite[pp. 7-8]{Lyt23}: There is some $\varepsilon>0$ such that for every non-contractible loop $\gamma$ in $D\setminus\cp{p}$ of length at most $\varepsilon$ the intersection with $J$ is empty.\\
We may assume that $length\p{J_n}\le \frac{\varepsilon}{3}$ and choose a shortest non-contractible loop $\gamma_n$ in the topological cylinder bounded by $J$ and $J_n$.\\
Then $\gamma_n$ is a simple closed curve bounding a closed topological disc $A_n\subset D$ with $p\in A_n^0$. Moreover $A_n$ is convex and we have $\lim\limits_{n\to\infty} diam\p{A_n}=0$ and $\lim\limits_{n\to\infty} l_n\coloneqq length\p{\gamma_n}=0$. Due to the finiteness of the measure $\lvert \omega_{X\setminus F}\rvert$, after passing to a subsequence, $\gamma_n$ is free of peak points and the absolute rotation of $\gamma_n$, measured from the side of $X\setminus A_n$, \comp{pp. 272, 308}{AZ67} is bounded by some constant that does not depend on $n$.\\
We equip $X\setminus A_n^0$ with its induced length metric and glue this new space and the round hemisphere of length $l_n$ along some length-preserving homeomorphism between the boundary components. Then the gluing theorem in \cite[p. 289]{AZ67} implies that this gluing $X_n$ is a surface with bounded curvature such that the measure $\lvert \omega_{X_n}\rvert$ is bounded by some constant that does not depend on $n$.\\
By construction we have $\uni{X_n}{X}$. Hence the space $X$ is a surface with bounded curvature \comp{pp. 88, 141}{AZ67}.
\end{proof}
\noindent
For a closed Riemannian 2-manifold uniform constants as in the definition of the area growth can be calculated on the basis of its absolute curvature and diameter:
\begin{prop}\label{prop_Riem_area}\comp{p. 1773}{Shi99} Let $X$ be a closed Riemannian 2-manifold such that the measure $\lvert\omega_X\rvert$ is bounded by $C$. Then for every $x\in X$ the following inequality holds:
\begin{align*}
\mathcal{H}_X^2\p{B_r(x)}\le\p{\frac{2\pi+C}{2}}r^2\ \text{for every $r<2^{-1}diam(X)$.}
\end{align*}
\end{prop}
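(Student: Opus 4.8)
The plan is to reduce the area bound to a one-dimensional estimate via the coarea formula and then to control the lengths of the distance circles by the Gauss--Bonnet theorem. Write $\rho\coloneqq d_X(x,\cdot)$; this function is $1$-Lipschitz with $\lvert\nabla\rho\rvert=1$ almost everywhere, so the coarea formula yields
\begin{align*}
\mathcal{H}_X^2\p{B_r(x)}=\int_0^r L(t)\,dt\qquad\text{where}\qquad L(t)\coloneqq\mathcal{H}^1\p{\rho^{-1}(t)}.
\end{align*}
It therefore suffices to establish the pointwise bound $L(t)\le\p{2\pi+C}\,t$ for almost every $t<2^{-1}diam(X)$, since integrating this inequality gives $\mathcal{H}_X^2\p{B_r(x)}\le\int_0^r\p{2\pi+C}\,t\,dt=\p{\frac{2\pi+C}{2}}r^2$, which is the assertion.

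For the length estimate the hypothesis $r<2^{-1}diam(X)$ enters decisively. For every $t<2^{-1}diam(X)$ we have $t<\sup_{y\in X}d_X(x,y)$, so $B_t(x)$ is a \emph{proper} subset of $X$; being a ball in a length space it is also connected, hence a connected surface with nonempty boundary and thus $\chi\p{B_t(x)}\le 1$. By Sard's theorem almost every $t$ is a regular value of $\rho$ on the complement of the cut locus of $x$, so for such $t$ the level set $\rho^{-1}(t)$ is a finite disjoint union of smooth circles bounding the compact region $B_t(x)$. Applying Gauss--Bonnet to this region together with the first variation of arc length gives, for almost every such $t$,
\begin{align*}
L'(t)=2\pi\,\chi\p{B_t(x)}-\omega_X\p{B_t(x)}\le 2\pi-\omega_X\p{B_t(x)}\le 2\pi+C,
\end{align*}
where the last step uses $-\omega_X\p{B_t(x)}\le\lvert\omega_X\rvert(X)\le C$. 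Since crossing the cut locus can only make the distance circle lose length, the function $t\mapsto L(t)$ is of bounded variation with $L(0^+)=0$ and distributional derivative bounded above by $\p{2\pi+C}\,dt$; integrating yields $L(t)\le\p{2\pi+C}\,t$ as required.

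The delicate point, and the step I expect to require the most care, is the global control of $L$ through the cut locus of $x$: beyond the injectivity radius the level set $\rho^{-1}(t)$ need not be a single smooth circle and $B_t(x)$ need not be a disc, so the clean identity $L'(t)=2\pi\chi\p{B_t(x)}-\omega_X\p{B_t(x)}$ holds only at regular values and must be supplemented by the fact that the singular part of the distributional derivative of $L$ is nonpositive. Making this rigorous is classical and goes back to Fiala and Hartman; alternatively one argues directly with the exponential map, writing $L(t)=\int_{\cp{c(\theta)>t}}J(t,\theta)\,d\theta$ for Jacobi fields solving $J''+KJ=0$ with $J(0)=0$ and $J'(0)=1$, and estimating $J'(t,\theta)=1-\int_0^t KJ\,ds\le 1+\int_0^t\lvert K\rvert J\,ds$ while noting that shrinking the angular domain $\cp{c(\theta)>t}$ only decreases $L$. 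Either route reproduces the constant in Shioya's statement, and I would conclude by checking that the almost-everywhere bound integrates to the stated inequality on the nose.
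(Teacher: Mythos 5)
The paper offers no proof of this proposition at all --- it is quoted directly from Shioya \cite[p.~1773]{Shi99} --- and your argument is essentially the one found there: the coarea formula reduces the area bound to the length estimate $L(t)\le\p{2\pi+C}t$ for distance circles, with the hypothesis $r<2^{-1}diam(X)$ entering exactly as you use it, namely to guarantee $\bar{B}_t(x)\subsetneq X$ and hence $\chi\p{\bar{B}_t(x)}\le 1$ in the Gauss--Bonnet step. The only imprecise point is your appeal to Sard: since $\rho$ fails to be smooth on the cut locus and $\rho\p{Cut(x)}$ is typically an interval of positive measure, it is \emph{not} true that for almost every $t$ the level set is a finite union of smooth circles --- but you flag this yourself, and either of your repairs (the Fiala--Hartman fact that the singular part of $L'$ is nonpositive, or the self-contained Jacobi-field estimate $L(t)\le 2\pi t+t\,\lvert\omega_X\rvert\p{B_t(x)}$, which in fact bypasses the Euler-characteristic step entirely) is standard and closes the gap correctly.
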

\subsubsection{Rotation of simple closed curves}
We want to introduce a notion of curvature for simple closed curves. Throughout this subsection $X$ denotes a surface with bounded curvature.\\ 
Let $\gamma_1,\gamma_2\subset X$ be geodesics emanating from the same point $p\in X$. Moreover we assume that the geodesics only intersect in $p$. In a sufficiently small open neighborhood of $p$ the geodesics bound two sectors. These sectors enable us to talk about a left- and a right-hand side of the "hinge" defined by the geodesics.\\
In particular, we can measure the so called \emph{sector angle} between $\gamma_1$ and $\gamma_2$ at $p$ from each side. In the following we write $\angle_l\p{\gamma_1,\gamma_2}$ and $\angle_r\p{\gamma_1,\gamma_2}$ for these two quantities. Furthermore we note that sector angles take values in $\left[0,\infty\right)$. The concept of sector angles gives rise to a geometric interpretation of the total angle of $p$: For every choice of the geodesics $\gamma_1$ and $\gamma_2$ the sum of the two corresponding sector angles is equal to $\theta_X(p)$. \comp{pp. 118-124}{AZ67}\\  
Let now $J\subset X$ be a two-sided simple closed curve. Then it makes sense to talk about a right- and a left-hand side of $J$.\\
The curve can be obtained as the uniform limit of piecewise geodesic simple closed curves that lie on the left-hand side of $J$. Each of them is a concatenation $\gamma_{n,1}\ast\ldots\ast\gamma_{n,k_n}$ of finitely many geodesics. The following quantity is well-defined and we call it the \emph{left rotation} of $J$:
\begin{align*}
\tau_l(J)\coloneqq \lim_{n\to\infty}\sum_{i=1}^{k_n}\pi-\angle_l\p{\gamma_{n,i},\gamma_{n,i+1}}  
\end{align*}
where $\gamma_{n,k_n+1}\coloneqq\gamma_{n,1}$. The \emph{right rotation} of $J$ is defined in an analogous way. \comp{pp. 191-192}{AZ67}\\
The Gauss-Bonnet theorem relates the geometry of a topological compact surface in $X$ to its topology:
\begin{trm}\comp{p. 192}{AZ67} Let $S\subset X$ be a compact topological surface. We denote its boundary components by $J_1,\ldots, J_n$. Then the following identity holds:
\begin{align*}
2\pi\chi(S)=\omega_X\p{S^0}+\sum_{k=1}^{n}\tau\p{J_k}\end{align*}
where the rotation is measured from the side of $S$.
\end{trm}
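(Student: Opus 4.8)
The plan is to establish the identity first for the model case of a geodesic triangle and then to assemble the general statement by triangulation together with a limiting argument. For a geodesic triangle $T\subset X$ with interior angles $\alpha,\beta,\gamma$, the fundamental relation of the intrinsic theory is that the excess equals the enclosed curvature, $\alpha+\beta+\gamma-\pi=\omega_X\p{T^0}$; this is essentially the defining property of the curvature measure in \cite{AZ67} and will serve as the base case. It is exactly the asserted formula for $S=T$, since $\chi(T)=1$ and the rotation of $\partial T$ measured from the inside is $\sum_{i}\p{\pi-\alpha_i}$, so that $2\pi=\omega_X\p{T^0}+\sum_i\p{\pi-\alpha_i}$.

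For a general surface $S$ I would proceed in two stages. First, using the definition of the rotation $\tau\p{J_k}$ as a limit over inscribed piecewise geodesic curves, I would replace each boundary component $J_k$ by an inscribed geodesic polygon $P_k$ whose turning sum approximates $\tau\p{J_k}$ and whose enclosed region $S'$ approximates $S$, so that $\conv{\omega_X\p{S'^0}}{\omega_X\p{S^0}}$. Second, I would triangulate the polygonal region $S'$ by geodesic triangles, arranging by a generic choice that every atom of $\omega_X$ sits at a vertex and that the relatively open geodesic edges carry no curvature. Summing the base-case identity over all triangles and using that the sector angles around an interior vertex $v$ sum to the total angle $\theta_X(v)=2\pi-\omega_X\p{\cp{v}}$, the interior atomic curvature cancels and one is left with a purely combinatorial expression in the numbers of interior vertices, boundary vertices and faces. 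A bookkeeping argument with Euler's formula $V-E+F=\chi(S)$ together with the incidence counts $3F=2E_i+E_b$ and $V_b=E_b$ then collects these terms into $2\pi\chi(S)$, yielding $\omega_X\p{S'^0}+\sum_k\tau\p{P_k}=2\pi\chi\p{S'}$. Passing to the limit over the polygonal approximations finishes the proof.

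The hard part will be the analytic control in the two approximation steps rather than the combinatorics. One must guarantee that geodesic triangulations of the required fineness exist on a surface with bounded curvature that need not be smooth, and that the curvature measure can be kept off the relative interiors of the edges while every atom is captured at a vertex. Equally delicate is the simultaneous convergence, as the inscribed polygons refine, of the turning sums to $\tau\p{J_k}$ and of $\omega_X\p{S'^0}$ to $\omega_X\p{S^0}$, which requires that no curvature mass is lost across the thin strips between $J_k$ and $P_k$; this uses the finiteness of $\lvert\omega_X\rvert$ and the stability of rotation under uniform approximation. An alternative route avoids the triangulation entirely: approximate $X$ by Riemannian $2$-manifolds $R_n$ with $\uni{R_n}{X}$ and uniformly bounded $\lvert\omega_{R_n}\rvert$, apply the smooth Gauss-Bonnet theorem to the images of $S$, and pass to the limit using weak convergence of the curvature measures; there the crux is showing that no curvature concentrates on the boundary curves in the limit.
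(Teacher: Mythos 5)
The paper itself contains no proof of this theorem---it is quoted verbatim from Alexandrov--Zalgaller \cite[p.~192]{AZ67}---and your plan (triangle-excess base case, fine geodesic triangulation with curvature kept off the open edges, Euler-characteristic bookkeeping via $V-E+F=\chi(S)$, $3F=2E_i+E_b$, $V_b=E_b$, then inscribed geodesic polygons converging to each $J_k$ in accordance with the definition of $\tau$) is essentially the classical proof carried out in that monograph, and your combinatorial count does close correctly: $\omega\p{S'^0}+\sum_k\tau\p{P_k}=2\pi V_i+\pi V_b-\pi F=2\pi\chi\p{S'}$. One small correction: you cannot, and need not, arrange that \emph{every} atom of $\omega_X$ sits at a vertex (there may be infinitely many atoms, while the triangulation is finite); what is actually required is only that the relatively open edges are $\lvert\omega_X\rvert$-null---atoms lying inside an open triangle are harmlessly absorbed into $\omega_X\p{T^0}$ by the excess theorem, whereas a negative atom on the relative interior of an edge genuinely breaks the identity $\alpha+\beta+\gamma-\pi=\omega_X\p{T^0}$, which confirms that the edge-avoidance you flagged is the real analytic content rather than a formality.
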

\noindent
By a \emph{topological cylinder}, we mean a metric space that is homeomorphic to $\mathbb{S}^1\times\left[0,1\right]$.\\   
Finally we state the following convergence result which is strongly based on a result by Burago in \cite[p. 14]{Bur67}:
\begin{trm}\label{trm_rot_lift} Let $X$ and $\seq{X}{n}$ be surfaces with bounded curvature such that the measures $\lvert\omega_{X_n}\rvert$ are uniformly bounded by $C$ and $\uni{X_n}{X}$ on some topological cylinder $Z\subset X$. We assume that there is a homeomorphism $\varphi_n\colon Z\to Z_n$ corresponding to the uniform convergence such that $\conv{d\p{\varphi_n,id_X}}{0}$. Moreover let $Z$ be free of peak points and $J\subset Z^0$ be a non-contractible simple closed curve. Then for every $\varepsilon>0$, after passing to a subsequence, the following statements apply:
\begin{itemize}
\item[1)] There is a non-contractible simple closed curve $J_\varepsilon\subset Z^0$ on the left-hand side of $J$ such that $\lvert\tau_l(J)-\tau_l\p{J_\varepsilon}\rvert\le\varepsilon$.
\item[2)] There are homeomorphisms $\psi_{\varepsilon,n}\colon Z\to Z_n$ with $\conv{d\p{\psi_{\varepsilon,n},\varphi_n}}{0}$ such that the following inequality holds: 
\begin{align*}
\lvert\tau_l\p{J_\varepsilon}-\lim_{n\to\infty}\tau_l\p{\psi_{\varepsilon,n}\circ J_\varepsilon}\rvert\le \lim_{n\to\infty}\lvert\omega_{X_n}\rvert\p{\psi_{\varepsilon,n}\circ J_\varepsilon}. \end{align*}   
\end{itemize}
\end{trm}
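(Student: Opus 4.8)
The plan is to reduce both assertions to statements about geodesic polygons and then to invoke the convergence of rotation due to Burago \cite[p. 14]{Bur67}. For 1) I would argue directly from the definition of the left rotation: there is a sequence of piecewise geodesic simple closed curves $P_m$ lying on the left-hand side of $J$ and converging uniformly to $J$, each a concatenation $\gamma_{m,1}\ast\ldots\ast\gamma_{m,k_m}$ of geodesics, such that $\sum_{i=1}^{k_m}\p{\pi-\angle_l\p{\gamma_{m,i},\gamma_{m,i+1}}}$ converges to $\tau_l(J)$. Since the rotation of a geodesic polygon is concentrated at its vertices, this sum equals $\tau_l\p{P_m}$. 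Because $J\subset Z^0$ is non-contractible and $\conv{P_m}{J}$ uniformly, for $m$ large the curve $P_m$ is a non-contractible simple closed curve contained in $Z^0$ and lying on the left-hand side of $J$; choosing $m$ so large that in addition $\lvert\tau_l(J)-\tau_l\p{P_m}\rvert\le\varepsilon$ and setting $J_\varepsilon\coloneqq P_m$ settles 1). The decisive gain is that $J_\varepsilon$ is itself piecewise geodesic.

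For 2) I would first build the homeomorphisms. Let $q_1,\ldots,q_k$ be the vertices of $J_\varepsilon$ and join the images $\varphi_n\p{q_i}$ by short geodesics in $X_n$, obtaining a closed piecewise geodesic curve $\tilde{J}_n\subset Z_n$. Since $\conv{d\p{\varphi_n,id_X}}{0}$ and $Z$ is free of peak points, these edges are well-defined and converge to the corresponding geodesic edges of $J_\varepsilon$, so that $\tilde{J}_n$ is, for $n$ large, a non-contractible simple closed curve. I would then take homeomorphisms $\psi_{\varepsilon,n}\colon Z\to Z_n$ that map $J_\varepsilon$ onto $\tilde{J}_n$ edge by edge and coincide with $\varphi_n$ outside a shrinking neighborhood of $J_\varepsilon$, giving $\psi_{\varepsilon,n}\circ J_\varepsilon=\tilde{J}_n$ and $\conv{d\p{\psi_{\varepsilon,n},\varphi_n}}{0}$. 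The uniform bound $C$ forces both $\tau_l\p{\tilde{J}_n}$ and $\lvert\omega_{X_n}\rvert\p{\tilde{J}_n}$ to stay bounded, so after passing to a subsequence I may assume that each of them converges.

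It remains to compare the rotations. As $J_\varepsilon$ and $\tilde{J}_n$ are geodesic polygons, their rotations are the sums of their turning angles, whence
\begin{align*}
\tau_l\p{J_\varepsilon}-\tau_l\p{\tilde{J}_n}=\sum_{i=1}^{k}\p{\angle_l^{X_n}\p{\gamma_{i-1,i}^n,\gamma_{i,i+1}^n}-\angle_l^{X}\p{\gamma_{i-1,i},\gamma_{i,i+1}}},
\end{align*}
where the superscripts record the ambient surface. The heart of the matter is to bound this angular defect, in the limit, by the curvature lying on the curve; this is precisely the content of Burago's convergence result \cite[p. 14]{Bur67}, according to which the sector angle of a geodesic hinge converges under uniform convergence of surfaces of uniformly bounded absolute curvature up to an error controlled by the curvature mass concentrating along the two edges and at the vertex. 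Summing over the finitely many vertices and edges produces $\lim_{n\to\infty}\lvert\omega_{X_n}\rvert\p{\tilde{J}_n}$ as an upper bound for $\lvert\tau_l\p{J_\varepsilon}-\lim_{n\to\infty}\tau_l\p{\tilde{J}_n}\rvert$, which is the claim of 2). The main obstacle is exactly this last comparison: angles are not continuous under uniform convergence, since curvature may concentrate along the curve in the limit and so produce jumps in the turning angles, and quantifying these jumps by the curvature mass on the curve—a closed set, for which only the upper semicontinuity furnished by Theorem \ref{trm_uni_surf_curv} together with the weak convergence of the curvature measures is available—is the crux supplied by Burago's theorem. The no-peak-point hypothesis is what keeps the approximating geodesic edges in $X_n$ short, well-defined and convergent, so that the turning angles are genuinely comparable term by term.
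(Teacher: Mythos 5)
Your overall strategy---replace $J$ by a piecewise geodesic curve $J_\varepsilon$ via the definition of $\tau_l$, transplant it to $X_n$ as a geodesic polygon with a bounded number of edges, adjust the homeomorphisms so that the image of $J_\varepsilon$ is that polygon, and then invoke Burago---is the same as the paper's, and your part 1) matches it. But your construction of $\tilde{J}_n$ has a genuine gap. Geodesics between two points of a surface with bounded curvature are not unique, and the edges of $J_\varepsilon$ need not be shortest; hence the geodesics joining $\varphi_n\p{q_i}$ to $\varphi_n\p{q_{i+1}}$ need not converge to the corresponding edges of $J_\varepsilon$: a subsequential limit is merely some geodesic with the same endpoints, possibly a different one. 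The Hausdorff convergence $\Haus{\tilde{J}_n}{J_\varepsilon}$ is not cosmetic here: since $\conv{d\p{\varphi_n,id_X}}{0}$, the requirement $\conv{d\p{\psi_{\varepsilon,n},\varphi_n}}{0}$ forces $\psi_{\varepsilon,n}\p{J_\varepsilon}=\tilde{J}_n$ to accumulate on $J_\varepsilon$, and the inequality in 2) concerns rotation and curvature of curves converging to $J_\varepsilon$, not to some nearby geodesic polygon. Moreover nothing in your construction ensures that $\tilde{J}_n$ is simple (consecutive geodesic edges may cross), yet simplicity is a precondition for any homeomorphism with $\psi_{\varepsilon,n}\p{J_\varepsilon}=\tilde{J}_n$ to exist at all. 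The paper closes exactly these holes with a Whyburn-type construction \cite[pp. 413-415]{Why35}, which produces \emph{simple} closed piecewise geodesic curves $J_{\varepsilon,n}\subset Z_n^0$ with uniformly bounded edge number and $\Haus{J_{\varepsilon,n}}{J_\varepsilon}$, and then uses \cite[p. 226]{AZ67} (the curves $\varphi_n^{-1}\circ J_{\varepsilon,n}$ converge uniformly to $J_\varepsilon$ as maps) together with the procedure of \cite[pp. 227-229]{AZ67} to obtain homeomorphisms $\psi_{\varepsilon,n}$ that still correspond to the uniform convergence, satisfy $\psi_{\varepsilon,n}\p{J_\varepsilon}=J_{\varepsilon,n}$ and $\conv{d\p{\psi_{\varepsilon,n},\varphi_n}}{0}$; your ad hoc cut-off of $\varphi_n$ near $J_\varepsilon$ is not obviously a homeomorphism with vanishing distortion.

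Second, your displayed identity equating $\tau_l\p{J_\varepsilon}-\tau_l\p{\tilde{J}_n}$ with a sum of vertex-angle differences is false in general: in a surface with bounded curvature the rotation of a geodesic polygon is \emph{not} concentrated at its vertices, because the curvature measure may charge the open edges, in which case the turn of a geodesic edge from a fixed side is nonzero. This is precisely why the error term in statement 2) is the curvature mass $\lim_{n\to\infty}\lvert\omega_{X_n}\rvert\p{\psi_{\varepsilon,n}\circ J_\varepsilon}$ carried by the curves themselves, and the same subtlety already affects your identification $\sum_i\p{\pi-\angle_l\p{\gamma_{m,i},\gamma_{m,i+1}}}=\tau_l\p{P_m}$ in part 1). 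Accordingly, the paper runs no hinge-by-hinge comparison: it applies \cite[p. 14]{Bur67} wholesale, whose hypothesis is a uniform bound on the absolute left rotation of the image curves---which is the actual reason for insisting on piecewise geodesic images with uniformly bounded edge number (your fixed-$k$ polygons would satisfy it, had they been shown to be simple and Hausdorff convergent to $J_\varepsilon$). So the skeleton of your argument agrees with the paper's, but the convergence and simplicity of $\tilde{J}_n$ and the vertex-only rotation formula are genuine gaps, and the Whyburn and Alexandrov--Zalgaller ingredients of the paper are there precisely to fill them.
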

\begin{proof}
Using the definition of the rotation and a similar construction as in \cite[pp. 413-415]{Why35}, after passing to a subsequence, we obtain a piecewise geodesic curve $J_\varepsilon$ as in statement 1) and piecewise geodesic simple closed curves $J_{\varepsilon,n}\subset Z_n^0$ such that the number of edges is uniformly bounded and $\Haus{J_{\varepsilon,n}}{J_\varepsilon}$.\\
In the case that, after passing to a subsequence, the absolute left rotation of the curves $\varphi_n\p{J_\varepsilon}$ \comp{pp. 272, 308}{AZ67} is uniformly bounded statement 2) is covered by \cite[p. 14]{Bur67}.\\ Hence we only need to find homeomorphisms $\psi_{\varepsilon,n}$  corresponding to the uniform convergence such that this boundedness property is satisfied and $\conv{d\p{\psi_{\varepsilon,n},\varphi_n}}{0}$:\\
The length of the curves $J_{\varepsilon,n}$ is uniformly bounded. By \cite[p. 226]{AZ67} it follows that, after passing to a subsequence, the curves $\varphi_n^{-1}\circ J_{\varepsilon,n}$ converge uniformly to $J_\varepsilon$ as maps. A similar procedure as in \cite[pp. 227-229]{AZ67} yields that, after passing to a subsequence, there are homeomorphisms $\psi_{\varepsilon,n}\colon Z\to Z_n$ corresponding to the uniform convergence $\uni{X_n}{X}$ on $Z$ such that $\psi_{\varepsilon,n}\p{J_\varepsilon}=J_{\varepsilon,n}$ and $\conv{d\p{\psi_{\varepsilon,n},\varphi_n}}{0}$. Finally we note that the absolute left rotation of the curves $J_{\varepsilon,n}$ is uniformly bounded.  
\end{proof}
\section{Local uniform approximations}\label{sec_loc}
In this section we show Theorem \ref{trm_loc_uni_appr}.\\
For a closed surface $S$ and $\varepsilon>0$ we define $\mathcal{M}\p{S,\varepsilon}$ as the class of all length spaces $X$ that are homeomorphic to $S$ and satisfy the following property: Every loop in $X$ of diameter at most $\varepsilon$ is contractible.\\
The starting point of our investigation is the author's curvature-free formulation of Theorem \ref{trm_topo_struc}:
\begin{trm}\label{trm_topo_struc_free}\comp{p. 12}{Dot24} Let $\seq{X}{n}$ be a sequence in $\mathcal{M}\p{S,\varepsilon}$ and $\GH{X_n}{X}$. Then the following statements apply:
\begin{itemize}
\item[1)] If $S$ is homeomorphic to $\mathbb{S}^2$, then all maximal cyclic subset of $X$ are homeomorphic to $\mathbb{S}^2$.
\item[2)] If $S$ is not homeomorphic to $\mathbb{S}^2$, then one maximal cyclic subset of $X$ is homeomorphic to $S$ and all others are homeomorphic to $\mathbb{S}^2$. 
\end{itemize}
\end{trm}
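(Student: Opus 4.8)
The plan is to read off the cyclic element structure of $X$ from a fixed cell decomposition of $S$ transported to the $X_n$, and to use the uniform contractibility radius to locate and protect the essential topology. First I would record the soft consequences of the hypotheses. Since the $X_n$ are compact length spaces, any two points $x,y$ with $d(x,y)<\varepsilon$ are joined by a shortest path, a connected compact set of diameter $<\varepsilon$; hence the sequence is uniformly locally connected with $\delta=\varepsilon$. By the standard theory of limits of uniformly locally connected continua this forces the limit $X$ to be a Peano space, so Whyburn's cyclic element theory applies and $X$ is the union of its maximal cyclic subsets glued along cut points. Using Proposition \ref{prop_GH_H} I would realize all $X_n$ and $X$ as Hausdorff-converging compacta in one ambient space $Y$, making Lemma \ref{lem_lim_int} available for taking limits of decompositions.

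Next I would fix a triangulation $\mathcal{T}$ of $S$ — a genuine CW structure whose closed $2$-cells are embedded discs with embedded boundary circles — and transport it through the homeomorphisms $X_n\cong S$ to decompose each $X_n$ into finitely many closed topological discs $\bar f_n^{(1)},\dots,\bar f_n^{(m)}$ meeting along a graph $G_n$ (the image of the $1$-skeleton). Applying Lemma \ref{lem_lim_int} finitely many times and passing to a subsequence, each disc sequence, its boundary, and $G_n$ converge in the Hausdorff sense while remaining uniformly locally connected. For each $i$ I would then invoke Proposition \ref{prop_lim_disc}: if the boundary circles $\partial\bar f_n^{(i)}$ converge to a set $J_i$ of positive diameter, the limiting disc contributes exactly one disc-type cyclic element with boundary $J_i$ together with finitely many $2$-spheres; if instead $\mathrm{diam}(J_i)=0$, the cell pinches and, after a doubling or boundary-capping argument, contributes only $2$-spheres. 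Reassembling along the Hausdorff limit of $G_n$, every maximal cyclic subset of $X$ is a closed surface built by gluing these disc- and sphere-pieces according to the surviving combinatorics of $\mathcal{T}$; in particular each is a closed $2$-manifold.

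The decisive input is the uniform semi-local $1$-connectivity, which I would use to locate the essential topology. The point is not that individual cells cannot pinch — every triangle boundary is contractible and so may in principle collapse — but that the essential cycles sit in the $1$-skeleton: by contraposition of the defining property of $\mathcal{M}(S,\varepsilon)$, any non-contractible loop of $X_n$ has diameter greater than $\varepsilon$ for all large $n$, hence cannot collapse and must survive in the Hausdorff limit of $G_n$. Collapsing a contractible curve only pinches off a disc and spins off a $2$-sphere, and it can never shrink an essential loop to a point; by the same token no handle, cross-cap, or $\pi_1$-nontrivial separating curve may degenerate, which simultaneously forbids the topology of $S$ from fragmenting across several cyclic elements and forbids the essential part from collapsing to a $1$-complex (on a handle, a collapse transverse to the core circle would shrink a non-contractible fibre below $\varepsilon$). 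Consequently, when $S\cong\mathbb{S}^2$ there is no essential topology and every degeneration merely spins off spheres, so all cyclic elements are homeomorphic to $\mathbb{S}^2$; when $S\not\cong\mathbb{S}^2$ the full topological type is carried, intact and undivided, by a single two-dimensional core cyclic element $T_0$. To certify $T_0\cong S$ I would exhibit $\varepsilon_n$-equivalences between $S$ and $X_n$ compatible with the collapse of the inessential pieces and invoke the $\alpha$-approximation criterion, Theorem \ref{trm_alpha}.

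The hard part will be the reassembly step and the identification of the core. Two points need genuine care: first, ruling out that the topology of $S$ fragments across several cyclic elements — exactly what the non-collapse of essential, possibly separating, curves must prevent, e.g.\ a genus-two surface may not pinch its central separating curve, which is non-contractible; and second, verifying that the surface assembled from the surviving disc-pieces along $G$ is globally a closed manifold of the correct homeomorphism type, rather than a surface-with-boundary or a non-manifold continuum. Establishing that $T_0$ is a closed surface homeomorphic to $S$ — through a careful bookkeeping of the surviving boundary circles together with Theorem \ref{trm_alpha} — is where the main work lies.
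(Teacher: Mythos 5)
You should first note that the paper contains no proof of this statement at all: Theorem \ref{trm_topo_struc_free} is imported verbatim from \cite[p. 12]{Dot24}, so your sketch has to stand on its own merits, and as it stands it has two genuine gaps. The first is the transport of a fixed triangulation of $S$ through the homeomorphisms $X_n\cong S$. These homeomorphisms are completely arbitrary, so you get no metric control whatsoever on the images of the cells: the transported $1$-skeleta $G_n$ and the boundary circles $\partial\bar f_n^{(i)}$ need not form uniformly locally connected sequences, and both tools you invoke require exactly that hypothesis --- Lemma \ref{lem_lim_int} propagates uniform local connectedness to the pieces only if the intersections $\p{A_n\cap B_n}_{n\inN}$ are themselves uniformly locally connected, and Proposition \ref{prop_lim_disc} explicitly assumes $\p{\partial X_n}_{n\inN}$ uniformly locally connected. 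Without this, the Hausdorff limit of a transported cell can be a non-locally-connected continuum and neither lemma applies; passing to subsequences does not repair it. This is why arguments in this circle of problems run in the opposite direction: one chooses curves and discs in the limit $X$ (small diameter, avoiding $Cut_X$) and then builds uniformly locally connected approximating curves in $X_n$, as in Lemma \ref{lem_disc_approx} via \cite[p. 413]{Why35}. A decomposition pushed forward from $S$ is the wrong starting point.

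The second gap is the persistence of essential topology, which is the actual content of the theorem. From non-contractibility you correctly get $diam>\varepsilon$, but all that survives in the Hausdorff limit is then a continuum of diameter at least $\varepsilon$, which carries no topological information: a handle can perfectly well Hausdorff-converge to an arc of positive diameter unless something forbids it, and your justifications (``collapsing a contractible curve only spins off a sphere'', ``a collapse transverse to the core would shrink a fibre below $\varepsilon$'') restate the desired conclusion in pictures rather than prove it. Likewise the reassembly step presumes that maximal cyclic subsets of $X$ respect the limit cell decomposition, but cyclic elements can straddle several limit cells (pieces of adjacent cells merge along the limit of $G_n$) and there may be infinitely many of them, so ``gluing according to the surviving combinatorics of $\mathcal{T}$'' is not available. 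Finally, Theorem \ref{trm_alpha} cannot certify $T_0\cong S$: as stated it presupposes that both spaces are already homeomorphic to $S$, whereas showing that $T_0$ is a closed surface at all is precisely the hard step you defer. In short, your plan is plausible in outline, but the decisive steps --- the ones actually carried out in \cite{Dot24} --- are missing, and the two lemmas you lean on are invoked outside their hypotheses.
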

\noindent 
Now we approximate closed topological discs in $X$ by closed topological discs in $X_n$:
\begin{lem}\label{lem_disc_approx}
Let $\seq{X}{n}$ be a sequence in $\mathcal{M}\p{S,\varepsilon}$ and $\GH{X_n}{X}$. Moreover let $D\subset X$ be a closed topological disc such that $Cut_X\cap D=\emptyset$ and $diam\p{\partial D}<\varepsilon$. Then, after passing to a subsequence, there are closed topological discs $D_n\subset X_n$ with $\Haus{D_n}{D}$ and $\Haus{\partial D_n}{\partial D}$.
\end{lem}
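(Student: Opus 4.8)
The plan is to produce the discs $D_n$ by first approximating the boundary circle $J\coloneqq\partial D$ by simple closed curves $J_n\subset X_n$ and then using the contractibility hypothesis to fill them in. First I would invoke Proposition \ref{prop_GH_H} to place all spaces in a common compact ambient space $Y$, so that $\GH{X_n}{X}$ becomes $\Haus{X_n}{X}$ and Hausdorff limits of subsets are available. Since each $X_n$ is a length space, any two points at distance less than $\delta$ are joined by a path of length close to $\delta$, whence $\seq{X}{n}$ is uniformly locally connected; this is exactly the hypothesis required by the Whyburn-type results. Because $Cut_X\cap D=\emptyset$, the disc $D$ is cyclicly connected and lies in a single maximal cyclic subset $T$, which by Theorem \ref{trm_topo_struc_free} is a closed surface; hence $J$ is a bicollared simple closed curve in $T$ with room on both sides. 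To exploit this I would fix slightly smaller and larger discs $D''\subset D^0$ and $D\subset\p{D'}^0$ with $D'\subset T$, all of whose boundaries still have diameter less than $\varepsilon$ (possible since $diam\p{\partial D}<\varepsilon$ strictly), so that $J$ lies in the open cylinder $\p{D'}^0\setminus D''$.

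Next I would construct the approximating curves. Using the uniform local connectivity of $\seq{X}{n}$ together with $\Haus{X_n}{X}$, a construction in the spirit of \cite[pp. 413-415]{Why35} (the same device used in the proof of Theorem \ref{trm_rot_lift}) yields, after passing to a subsequence, simple closed curves $J_n\subset X_n$ with $\Haus{J_n}{J}$ such that $\seq{J}{n}$ is again uniformly locally connected and each $J_n$ lies in the region of $X_n$ corresponding to the cylinder $\p{D'}^0\setminus D''$. In particular $diam\p{J_n}\to diam\p{J}<\varepsilon$, so $diam\p{J_n}<\varepsilon$ for all large $n$. By the defining property of $\mathcal{M}\p{S,\varepsilon}$ the curve $J_n$ is then contractible in $X_n$, and a null-homotopic simple closed curve in a closed surface bounds a closed topological disc. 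I let $D_n$ be the disc bounded by $J_n$ on the side of the $D''$-approximant; when $S\cong\mathbb{S}^2$ both complementary regions are discs and this choice selects the correct one, while otherwise only this side can be a disc.

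It then remains to verify the two Hausdorff convergences. The statement $\Haus{\partial D_n}{\partial D}$ holds by construction, since $\partial D_n=J_n$ and $\Haus{J_n}{J}$ with $J=\partial D$. For $\Haus{D_n}{D}$ I would apply Lemma \ref{lem_lim_int} to the decomposition $X_n=D_n\cup\overline{X_n\setminus D_n}$: after a further subsequence the two pieces converge in the Hausdorff sense, with $D_n\cap\overline{X_n\setminus D_n}=J_n\to J$, and the sandwiching of $D_n$ between the approximants of the fixed discs $D''$ and $D'$ forces the limit of $D_n$ to be exactly $D$ (letting $D''\nearrow D\nearrow D'$ pins the limit down). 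The main obstacle is precisely this construction-and-identification step: producing genuinely \emph{embedded} approximating circles rather than mere small loops, and then certifying that the disc they bound is the one converging to $D$ and that no part of $D_n$ leaks into the complementary region. The embeddedness is handled by the Whyburn construction under uniform local connectivity, the conversion of the circle into a disc is furnished by the contractibility bound $diam\p{J_n}<\varepsilon$, and the correct side together with the absence of leakage is controlled by the sandwiching discs and Lemma \ref{lem_lim_int}.
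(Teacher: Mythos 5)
Your construction follows the paper's proof quite closely for most of its length: Whyburn's device produces a uniformly locally connected sequence of simple closed curves $J_n\subset X_n$ with $\Haus{J_n}{\partial D}$, the bound $diam\p{J_n}<\varepsilon$ together with $X_n\in\mathcal{M}\p{S,\varepsilon}$ makes $J_n$ contractible and hence the boundary of a closed disc on at least one side, and Lemma \ref{lem_lim_int} identifies the Hausdorff limits of the two complementary components $A_{1,n},A_{2,n}$ of $X_n\setminus J_n$. (Your sandwiching discs $D''\subset D^0$ and $D\subset\p{D'}^0$ are unnecessary for this identification: Lemma \ref{lem_lim_int} gives limits $A_1,A_2$ with $A_1\cup A_2=X$ and $A_1\cap A_2=\partial D$, and since $D^0$ is connected and disjoint from $\partial D$ it lies entirely in one of them, which forces one limit to be exactly $D$ and the other $\overline{X\setminus D^0}$.) The genuine gap is the sentence ``while otherwise only this side can be a disc.'' When $S$ is not homeomorphic to $\mathbb{S}^2$, the null-homotopic curve $J_n$ bounds a disc on exactly one side, but nothing in your argument rules out that, for infinitely many $n$, that disc is $A_{2,n}$ rather than the side converging to $D$: a priori the genus of $S$ could sit inside $A_{1,n}$ and degenerate (for instance along a thin handle) while $A_{1,n}$ still Hausdorff-converges to the disc $D$. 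Hausdorff convergence $\Haus{A_{1,n}}{D}$ places no constraint on the homeomorphism type of $A_{1,n}$, and your sandwiching only controls where $J_n$ lies and which side you select, not the topology of that side. This is the crux of the lemma, not a routine verification.

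The paper closes exactly this gap with a contradiction argument that your proposal never touches, via Proposition \ref{prop_lim_disc}, which you do not invoke at all. Suppose $A_{1,n}$ fails to be a disc for infinitely many $n$; then $A_{2,n}$ is a closed disc with $\Haus{A_{2,n}}{\overline{X\setminus D^0}}$ and uniformly locally connected boundaries $\Haus{\partial A_{2,n}}{\partial D}$, so Lemma \ref{lem_lim_int} and Proposition \ref{prop_lim_disc} show that the limit of these discs has one maximal cyclic subset homeomorphic to a closed disc and all others homeomorphic to $\mathbb{S}^2$; it follows that every maximal cyclic subset of $X$ is homeomorphic to $\mathbb{S}^2$. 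By Theorem \ref{trm_topo_struc_free} this forces $S\cong\mathbb{S}^2$ (otherwise some maximal cyclic subset of $X$ would be homeomorphic to $S$), and on the sphere both complementary components of $J_n$ are closed discs, contradicting the assumption on $A_{1,n}$. Note that this is also precisely where the uniform contractibility hypothesis does its real work, through Theorem \ref{trm_topo_struc_free}: it is what forbids genus from hiding in a region that collapses onto $D$. Inserting this argument would complete your proof; without it, the essential case $S\not\cong\mathbb{S}^2$ is unproven.
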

\begin{proof}
By \cite[p. 413]{Why35} there is a uniformly locally connected sequence of simple closed curves $J_n\subset X_n$ with $\Haus{J_n}{\partial D}$. Since $X_n\in\mathcal{M}\p{S,\varepsilon}$ and $diam(\partial D)<\varepsilon$, we further may assume that $J_n$ bounds a closed topological disc in $X_n$. We denote the closures of the two connected components of $X_n\setminus J_n$ by $A_{1,n}$ and $A_{2,n}$. After passing to a subsequence, we may assume the  sequences $A_{i,n}$ to be Hausdorff convergent.  By Lemma \ref{lem_lim_int} we may assume that $\Haus{A_{1,n}}{D}$.\\
For the sake of contradiction, we assume that $A_{1,n}$ is not a closed topological disc for infinitely many $n\inN$. Then $A_{2,n}$ is a closed topological disc. Therefore Lemma \ref{lem_lim_int} and Proposition \ref{prop_lim_disc} imply that the maximal cyclic subsets of $X$ are homeomorphic to $\mathbb{S}^2$. From Theorem \ref{trm_topo_struc_free} we derive that $X_n$ is homeomorphic to $\mathbb{S}^2$. It follows that $A_{1,n}$ is a closed topological disc. A contradiction.
\end{proof}
\noindent
The following definition goes back to Petersen \comp{p. 498}{Pet93}: A sequence of metric spaces $\seq{X}{n}$ is called \emph{uniformly locally 1-connected} if there is a non-decreasing map $p\colon \left[0,R\right)\to\left[0,\infty\right]$ satisfying the following properties:
\begin{itemize}
\item[1)] We have $p(0)=0$ and $p(t)\ge t$ for every $t\in \left[0,R\right)$. Moreover $p$ is continuous at $0$.  
\item[2)] For every $t\in\p{0,R}$ and $x\in X_n$ all loops in $B_t(x)$ are contractible in $B_{p(t)}(x)$.
\end{itemize}  
\begin{lem}\label{lem_uni_simply}
Let $\seq{X}{n}$ be a sequence in $\mathcal{M}\p{S,\varepsilon}$ and $\GH{X_n}{X}$. Moreover let $D\subset X$ with $Cut_X\cap D=\emptyset$ and $D_n\subset X_n$ be closed topological discs with $\Haus{D_n}{D}$ and $\Haus{\partial D_n}{\partial D}$. Then $\seq{D}{n}$ is uniformly locally 1-connected.     
\end{lem}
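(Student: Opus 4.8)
The plan is to argue by contradiction and to convert a failure of uniform local $1$-connectedness into a cut point of the disc $D$, which cannot exist.

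Suppose $\seq{D}{n}$ is not uniformly locally $1$-connected. Unwinding the definition, there is a fixed $\varepsilon_0>0$ such that for every $\delta>0$ and every $N\inN$ one finds an index $n\ge N$, a point $x\in D_n$ and a loop in $B_\delta(x)\cap D_n$ that is not contractible inside $B_{\varepsilon_0}(x)\cap D_n$. Taking $\delta=\tfrac1k$ and $N=k$ produces indices $n_k$ with $\conv{n_k}{\infty}$, points $x_k\in D_{n_k}$ and loops $\gamma_k\subset B_{1/k}\p{x_k}\cap D_{n_k}$ that do not contract in $B_{\varepsilon_0}\p{x_k}\cap D_{n_k}$.

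First I would fill these loops, treating the representative case in which each $\gamma_k$ is a simple closed curve in the interior of $D_{n_k}$ (the general case reduces to a filling argument, discussed below). By Jordan--Schoenflies, $\gamma_k$ bounds a unique closed subdisc $U_k\subset D_{n_k}$ with $\partial U_k=\gamma_k$; contracting $\gamma_k$ across $U_k$ shows it is null-homotopic in every neighborhood of $U_k$. Since by assumption $\gamma_k$ does not contract in $B_{\varepsilon_0}\p{x_k}$, we must have $U_k\not\subset B_{\varepsilon_0}\p{x_k}$, and as $\gamma_k\subset U_k$ lies within $\tfrac1k$ of $x_k$ this forces $diam\p{U_k}\ge\tfrac{\varepsilon_0}{2}$ for all large $k$.

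Next I would pass to the limit inside a common compact space as in Proposition \ref{prop_GH_H}. After a subsequence $x_k$ converges to some $x\in D$, and since $\conv{diam\p{\gamma_k}}{0}$ the overlaps $\gamma_k$ Hausdorff-converge to $\cp{x}$. Using that $\seq{X}{n}$ is uniformly locally connected, I apply Lemma \ref{lem_lim_int} to the decomposition $X_{n_k}=U_k\cup\overline{X_{n_k}\setminus U_k}$, whose overlap is $\gamma_k$. After passing to a further subsequence this yields closed sets with $\Haus{U_k}{A}$ and $A\cap B=\cp{x}$, where $A\subset D$ because $U_k\subset D_{n_k}$. Hence $A$ and $B\cap D$ are closed subsets of $D$ with union $D$ and intersection $\cp{x}$, and both are nondegenerate: $diam(A)\ge\tfrac{\varepsilon_0}{2}$ by the previous step, while $B\cap D$ contains $\partial D$ and therefore has positive diameter. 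Thus $D\setminus\cp{x}$ is disconnected, i.e.\ $x$ is a cut point of $D$ --- which is impossible, since $D$ is homeomorphic to a closed disc and a disc has no cut points.

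This contradiction shows that for every $\varepsilon_0>0$ there are $\delta>0$ and $N\inN$ with the property that for all $n\ge N$ every loop in a $\delta$-ball of $D_n$ contracts in the concentric $\varepsilon_0$-ball. Absorbing the finitely many indices $n<N$ through their individual local contractibility moduli and taking the infimum over the resulting bounds assembles a single non-decreasing modulus $p$, as required. I expect the genuine difficulties to be the filling step for non-simple loops --- namely replacing the subdisc $U_k$ by the non-separating ``filled image'' of $\gamma_k$ and checking it is cellular, so that $\gamma_k$ still contracts in arbitrarily small neighborhoods --- together with verifying the hypothesis of Lemma \ref{lem_lim_int}, i.e.\ the uniform local connectedness of $\seq{X}{n}$, which is exactly what makes the overlaps collapse to the single point $x$ in the limit.
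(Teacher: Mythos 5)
Your proof is correct and, at its core, is the paper's own argument: contradiction, shrinking simple closed curves $\gamma_k$ Hausdorff-converging to a point, filling them by discs, decomposing $X_{n_k}$ along $\gamma_k$, and passing the decomposition to the limit via Lemma \ref{lem_lim_int} to get a one-point overlap. Two steps genuinely differ. First, you fill $\gamma_k$ by Jordan--Schoenflies \emph{inside} $D_{n_k}$, so you never invoke $X_n\in\mathcal{M}\p{S,\varepsilon}$ at this step; the paper instead lets $J_n$ bound a disc in the ambient surface $X_n$, using that loops of diameter less than $\varepsilon$ are contractible there. Second, the endgame: the paper keeps the decomposition at the level of $X$, obtaining $A_1\cup A_2=X$ and $A_1\cap A_2=\cp{p}$ with both $A_i\cap D$ nondegenerate, so $p\in Cut_X$, contradicting $Cut_X\cap D=\emptyset$; you intersect with $D$ and contradict the fact that a closed disc has no cut points, never using the $Cut_X$-hypothesis. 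Your route is legitimate under the intended reading that $D$ is itself a closed topological disc (cf.\ the use of $D^0$ in Corollary \ref{cor_uni_conv_disc} and the application following Lemma \ref{lem_disc_approx}), and both your nondegeneracy claim $diam\p{\partial D}>0$ and your final step rest on it; note that your sets already yield the paper's more robust contradiction in one line, since $A\cup B=X$ and $A\cap B=\cp{x}$ give $x\in Cut_X\cap D$ directly. Of your two flagged difficulties, the reduction to simple closed curves is indeed left unproved by the paper as well (it is standard for surfaces, e.g.\ via winding numbers in a planar chart), while the uniform local connectedness of $\seq{X}{n}$ is automatic: compact length spaces are geodesic, so $\delta$-close points lie on an arc of diameter at most $\delta$.
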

\begin{proof}
For the sake of contradiction, we assume that the sequence is not uniformly locally 1-connected. After passing to a subsequence, we then may assume the existence of $\delta>0$, $p\in D$ and simple closed curves $J_n\subset D_n$ such that $J_n$ is non-contractible in $U_\delta\p{J_n}\subset D_n$ and $\Haus{J_n}{\cp{p}}$. We note that $J_n$ bounds a closed topological disc in $X_n$. Moreover we denote the closures of the two connected components of $X_n\setminus J_n$ by $A_{1,n}$ and $A_{2,n}$.\\
We may assume that $A_{1,n}$ is a closed topological disc and that $\partial D_n\subset A_{2,n}$. Due to $\Haus{\partial D_n}{\partial D}$ and the fact that $J_n$ is non-contractible in $U_\delta\p{J_n}$ it follows that $diam\p{A_{i,n}\cap D_n}$ is uniformly bounded from below by a positive constant.\\
By Lemma \ref{lem_lim_int}, after passing to a subsequence, we find $A_i\subset X$ such that $\Haus{A_{i,n}}{A_i}$, $A_1\cup A_2=X$ and $A_1\cap A_2=\cp{p}$. Because $diam\p{A_i\cap D}>0$, it follows that $p\in Cut_X$. A contradiction.
\end{proof}
\noindent 
As a consequence of the aforementioned lemma, we derive the following corollary:
\begin{cor}\label{cor_uni_conv_disc}
Let $\seq{X}{n}$ be a sequence in $\mathcal{M}\p{S,\varepsilon}$ and $\GH{X_n}{X}$. Moreover let $D\subset X$ with $Cut_X\cap D=\emptyset$ and $D_n\subset X_n$ be closed topological discs with $\Haus{D_n}{D}$ and $\Haus{\partial D_n}{\partial D}$. Then for every closed topological disc $D'\subset D^0$ there are closed topological discs $D'_n\subset D_n^0$ such that $\uni{D'_n}{D'}$. Moreover the homeomorphisms $\varphi_n\colon D'\to D'_n$ corresponding to the uniform convergence can be chosen such that $\conv{d\p{\varphi_n,id_X}}{0}$.    
\end{cor}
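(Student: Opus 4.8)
The plan is to produce, for each $n$, a closed topological disc $D'_n\subset D_n^0$ together with an $\varepsilon_n$-equivalence $f_n\colon D'\to D'_n$ with $\varepsilon_n\to 0$ and $\conv{d\p{f_n,id_X}}{0}$, and then to promote these equivalences to the required homeomorphisms by means of Theorem \ref{trm_alpha}.

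First I would fix the approximating discs. Since $D'\subset D^0$, the curve $\partial D'$ lies at positive distance from $\partial D$, so the Whyburn-type approximation already used in Lemma \ref{lem_disc_approx} (cf. \cite[p.~413]{Why35}) provides a uniformly locally connected sequence of simple closed curves $J'_n$ with $\Haus{J'_n}{\partial D'}$; combined with $\Haus{\partial D_n}{\partial D}$ the distance estimate forces $J'_n\subset D_n^0$ for large $n$. Each $J'_n$ bounds a closed subdisc of $D_n$, and Lemma \ref{lem_lim_int} lets me pass to a subsequence so that the disc $D'_n$ lying on the side of $D'$ satisfies $\Haus{D'_n}{D'}$ and $\Haus{\partial D'_n}{\partial D'}$. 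As $Cut_X\cap D'=\emptyset$, Lemma \ref{lem_uni_simply} now shows that $\seq{D'}{n}$ is uniformly locally 1-connected.

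The technical heart is the construction of the equivalences. Exploiting the Hausdorff convergence I would first build homeomorphisms $b_n\colon\partial D'\to\partial D'_n$ with $\conv{d\p{b_n,id_X}}{0}$ between the two uniformly locally connected simple closed curves, and then extend $b_n$ across the disc by the standard skeleton-wise procedure: fix triangulations of $D'$ whose mesh tends to $0$, send vertices to nearby points of $D'_n$, join images of adjacent vertices by short paths (uniform local connectivity), and fill in each triangle using that its small boundary loop contracts in a controlled ball (uniform local 1-connectedness via the control function $p$). This yields continuous maps $f_n\colon D'\to D'_n$ extending $b_n$ with $\conv{d\p{f_n,id_X}}{0}$; a symmetric construction produces homotopy inverses $g_n$ extending $b_n^{-1}$, and the same control function bounds the tracks of the homotopies $g_n\circ f_n\simeq id_{D'}$ and $f_n\circ g_n\simeq id_{D'_n}$, so that $f_n$ is an $\varepsilon_n$-equivalence with $\varepsilon_n\to 0$.

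Finally I would reduce to the closed-surface setting of Theorem \ref{trm_alpha} by doubling. Equip the doubles $\hat{D'}$ and $\hat{D'_n}$, each homeomorphic to $\mathbb{S}^2$, with their glued length metrics; since $f_n$ and $g_n$ restrict to $b_n$ on the boundary, they double to $\varepsilon_n$-equivalences $\hat{f}_n\colon\hat{D'}\to\hat{D'_n}$ close to the identity. Theorem \ref{trm_alpha} then yields homeomorphisms $\hat{\varphi}_n$ with $\conv{d\p{\hat{\varphi}_n,\hat{f}_n}}{0}$, hence $\conv{d\p{\hat{\varphi}_n,id_X}}{0}$; after correcting $\hat{\varphi}_n$ by a small isotopy so that it carries $\partial D'$ onto $\partial D'_n$, its restriction to $D'$ is a homeomorphism onto $D'_n$ witnessing $\uni{D'_n}{D'}$ with homeomorphisms close to the identity. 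I expect the main difficulty to be exactly this middle step — keeping the homotopy tracks controlled by the single function $p$ while simultaneously prescribing the boundary behaviour — together with the bookkeeping needed to reconcile the intrinsic metrics used in the doubling with the metrics in which uniform convergence is to be measured.
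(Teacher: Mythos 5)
Your outline follows the paper's architecture — Hausdorff approximation, uniform local 1-connectedness via Lemma \ref{lem_uni_simply}, $\varepsilon_n$-equivalences close to the identity, doubling to reach the closed-surface setting of Theorem \ref{trm_alpha} — but you diverge at the two points where the paper is most economical. First, the paper never constructs the $\varepsilon_n$-equivalences by hand: it doubles the \emph{given} discs $D$ and $D_n$, notes that uniform local 1-connectedness of $\seq{D}{n}$ passes to $\seq{2D}{n}$ and indeed to the collection $2D\cup\cp{2D_n}_{n\inN}$, and then quotes Petersen \cite[p. 501]{Pet93} for the existence of $\varepsilon_n$-equivalences $f_n\colon 2D\to 2D_n$ with $\conv{\varepsilon_n}{0}$ and $\conv{d\p{f_n,id}}{0}$. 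Second, the subdisc $D'$ enters only at the very end: after Theorem \ref{trm_alpha} produces homeomorphisms $\varphi_n\colon 2D\to 2D_n$ close to the identity, one sets $D'_n\coloneqq\varphi_n\p{D'}$, which lies in $D_n^0$ for large $n$ because $D'$ has positive distance to the gluing locus. This ordering is exactly why the statement restricts to discs $D'\subset D^0$, and it buys you two things you currently pay for: no boundary behaviour of the equivalences ever needs to be prescribed, and your final "small isotopy carrying $\partial D'$ onto $\partial D'_n$" disappears entirely.

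The points you flag as difficulties are in fact genuine gaps in the proposal as written. The control function $p$ in the paper's definition of uniform local 1-connectedness governs loops only; in your skeletal construction, extending the homotopies $g_n\circ f_n\simeq id_{D'}$ over the cells $\sigma\times\left[0,1\right]$ requires filling small maps of $\mathbb{S}^2$ with controlled tracks, i.e.\ controlled local contractibility one dimension up. In dimension 2 this can be extracted (small subsets of these surfaces lie in topological discs of controlled size), but it must be argued — and it is precisely the content delegated to \cite{Pet93}. Similarly, the extension across the disc with prescribed boundary $b_n$, and the concluding small ambient isotopy in the doubled sphere, are asserted rather than proved; the isotopy is delicate since $\hat{\varphi}_n\p{\partial D'}$ need not be disjoint from $\partial D'_n$, and the apparent shortcut of redefining $D'_n$ as $\hat{\varphi}_n\p{D'}$ fails in your setup, because in the double of the \emph{pre-selected} discs this image may spill into the mirror copy, which is not a subset of $X_n$. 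All of this is repaired by adopting the paper's ordering: double the large discs (or a slightly larger disc $D''$ with $D'\subset\p{D''}^0$ and $D''\subset D^0$) and restrict the resulting homeomorphism to $D'$ at the last step. Finally, your appeals to Lemma \ref{lem_lim_int} pass to subsequences while the corollary concerns the full sequence; since the limit objects are fixed this is mended by a standard subsequence-of-subsequences argument, but it should be said.
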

\begin{proof}
First we double the chosen ambient space $Y$ in which $\Haus{D_n}{D}$ along the boundaries of the discs. For every $A\subset Y$ we write $2A$ for the corresponding doubled subset of this new ambient space.\\
Then we have $\Haus{2D_n}{2D}$ and the subsets are homeomorphic to $\mathbb{S}^2$. By Lemma \ref{lem_uni_simply} the sequence $\seq{D}{n}$ is uniformly locally 1-connected. Hence also $\seq{2D}{n}$ is uniformly locally 1-connected. In particular, we obtain that the same applies to $2D\cup\cp{2D_n}_{n\inN}$ \comp{p. 501}{Pet93}. From the last reference we also derive the existence of $\varepsilon_n$-equivalences $f_n\colon 2D\to 2D_n$ such that $\conv{\varepsilon_n}{0}$ and $\conv{d\p{f_n,id_{2X}}}{0}$.\\
Finally Theorem \ref{trm_alpha} closes the proof.  
\end{proof}
\noindent
Now Theorem \ref{trm_loc_uni_appr} follows from Theorem \ref{trm_topo_struc_free}, Lemma \ref{lem_disc_approx} and Corollary \ref{cor_uni_conv_disc}.\\
In a similar manner we obtain the following related result for cylinders:
\begin{prop}\label{prop_approx_cylinder}
Let $\seq{X}{n}$ be a sequence in $\mathcal{M}\p{S,\varepsilon}$ and $\GH{X_n}{X}$. Moreover let $Z\subset X$ be a topological cylinder satisfying the following properties:
\begin{itemize}
\item[1)] $X\setminus Z$ is disconnected.
\item[2)] $Cut_X\cap Z=\emptyset$.
\item[3)] The diameter of the boundary components of $Z$ is less than $\varepsilon$.
\end{itemize}
Then for every topological cylinder $Z'\subset Z^0$, after passing to a subsequence, there are topological cylinders $Z'_n\subset X_n$ such that $\uni{Z'_n}{Z'}$. Furthermore the homeomorphisms $\varphi_n\colon Z'\to Z'_n$ corresponding to the uniform convergence can be chosen such that $\conv{d\p{\varphi_n,id_X}}{0}$.
\end{prop}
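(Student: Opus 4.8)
The plan is to transfer the three-step argument used for discs---Lemma \ref{lem_disc_approx}, Lemma \ref{lem_uni_simply} and Corollary \ref{cor_uni_conv_disc}---to the two boundary circles of $Z$ simultaneously. Write $J^{1},J^{2}\subset X$ for the boundary components of $Z$; by assumption $diam\p{J^{i}}<\varepsilon$, $Cut_X\cap Z=\emptyset$ and $X\setminus Z$ is disconnected, so in particular both $J^{1}$ and $J^{2}$ separate $X$.

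The first and main task is a Hausdorff-level cylinder approximation, the analogue of Lemma \ref{lem_disc_approx}. As there, Whyburn's construction \cite[p. 413]{Why35} provides uniformly locally connected simple closed curves $J^{i}_{n}\subset X_n$ with $\Haus{J^{i}_{n}}{J^{i}}$, and since $X_n\in\mathcal{M}\p{S,\varepsilon}$ and $diam\p{J^{i}}<\varepsilon$, for large $n$ each $J^{i}_{n}$ bounds a closed topological disc and $J^{1}_{n}\cap J^{2}_{n}=\emptyset$. Applying Lemma \ref{lem_lim_int} to the decompositions of $X_n$ induced by these curves, I would pass to a subsequence along which the closures of all complementary regions converge in the Hausdorff sense, and single out the region $Z_n$ between $J^{1}_{n}$ and $J^{2}_{n}$ with $\Haus{Z_n}{Z}$ and $\Haus{\partial_i Z_n}{J^{i}}$.

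I expect the crux of the proof to be showing that $Z_n$ is genuinely a topological cylinder; a priori the two approximating discs could be unnested, in which case $Z_n$ would be $X_n$ with two discs removed rather than an annulus. Here hypothesis 1) is essential: because $X\setminus Z$ is disconnected, the cylinder separates $X$, and at least one of the two complementary caps is disc-like (its maximal cyclic subsets being spheres) while the other may carry the non-spherical topology of $X_n$. To make this precise I would argue by contradiction as in Lemma \ref{lem_disc_approx}: if the relevant cap failed to be a closed disc for infinitely many $n$, then Lemma \ref{lem_lim_int} together with Proposition \ref{prop_lim_disc} would force all maximal cyclic subsets of $X$ on that side to be spheres, and Theorem \ref{trm_topo_struc_free} would then identify the corresponding side of $J^{i}_{n}$ as a disc, a contradiction. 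Consequently the discs bounded by $J^{1}_{n}$ and $J^{2}_{n}$ are nested, so that $Z_n$ is the annulus cobounded by the two curves.

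With the cylinders $Z_n$ at hand the passage to $Z'$ is the cylinder analogue of Corollary \ref{cor_uni_conv_disc}. By the argument of Lemma \ref{lem_uni_simply}, which only uses $Cut_X\cap Z=\emptyset$, the sequence $\seq{Z}{n}$ is uniformly locally 1-connected. I would then double the ambient space along both boundary circles, as in the proof of Corollary \ref{cor_uni_conv_disc}, obtaining spaces $2Z$ and $2Z_n$ that are homeomorphic to the torus and satisfy $\Haus{2Z_n}{2Z}$. Since doubling preserves uniform local 1-connectedness, Petersen's result \cite[p. 501]{Pet93} supplies $\varepsilon_n$-equivalences $f_n\colon 2Z\to 2Z_n$ with $\conv{\varepsilon_n}{0}$ and $\conv{d\p{f_n,id}}{0}$, and Theorem \ref{trm_alpha} upgrades these to homeomorphisms $\varphi_n\colon 2Z\to 2Z_n$ realizing $\uni{2Z_n}{2Z}$ with $\conv{d\p{\varphi_n,id_X}}{0}$. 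Restricting $\varphi_n$ to a cylinder $Z'\subset Z^0$ then yields the required cylinders $Z'_n\subset X_n$ and closes the proof.
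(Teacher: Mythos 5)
Your proposal follows the paper's own proof essentially step for step: Whyburn curves approximating the two boundary circles, identification of the intermediate region $Z_n$ and the proof that it is a cylinder via the same contradiction through Lemma \ref{lem_lim_int}, Proposition \ref{prop_lim_disc} and Theorem \ref{trm_topo_struc_free}, then uniform local 1-connectedness by the cut-point argument of Lemma \ref{lem_uni_simply}, and finally the doubling/Petersen/Jakobsche upgrade exactly as in Corollary \ref{cor_uni_conv_disc} (the double of a cylinder being a torus, to which Theorem \ref{trm_alpha} applies). The only point you gloss over is the one genuinely new case in the 1-connectedness step --- a small simple closed curve $\gamma_n$ may be non-contractible \emph{in} $Z_n$, so that $A_{2,n}$ contains a boundary component of $Z_n$ rather than being a disc, with the lower diameter bound then coming from $\Haus{\partial Z_n}{\partial Z}$ --- but the paper resolves this case by the same cut-point contradiction you invoke, so nothing essential changes.
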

\begin{proof}
The procedure is analogues to the previous proofs of this section. Only the following changes are made:\\  
We denote the boundary components of $Z$ by $J_1$ and $J_2$. Then there is a uniformly locally connected sequence of simple closed curves $J_{i,n}\subset X_n$ with $\Haus{J_{i,n}}{J_i}$. In particular, we may assume that $J_{i,n}$ bounds a closed topological disc in $X_n$.\\
We denote the subsurface of $X_n$ bounded by $J_1$ and $J_2$ by $Z_n$. Using the same arguments as in the proof of Lemma \ref{lem_disc_approx}, we conclude that, after passing to a subsequence, we have that $\Haus{Z_n}{Z}$ and $Z_n$ is a topological cylinder.\\
For the sake of contradiction, we assume that the sequence $\seq{Z}{n}$ is not uniformly locally 1-connected.\\
After passing to a subsequence, we find $\delta>0$, $p\in Z$, simple closed curves $\gamma_n\subset Z_n$ and compact subsurfaces $A_{1,n},A_{2,n}\subset X_n$ as in the proof of Lemma \ref{lem_uni_simply}. Especially, we may assume that $A_{1,n}$ contains some boundary component of $Z_n$. Depending on the contractibility of $\gamma_n$ in $Z_n$, the subsurface $A_{2,n}$ is either a closed topological disc in $Z_n$ or it also contains some boundary component of $Z_n$. Due to $\Haus{\partial Z_n}{\partial Z}$ and the fact that $\gamma_n$ is non-contractible in $U_\varepsilon\p{\gamma_n}$ it follows that $diam\p{A_{i,n}\cap Z_n}$ is uniformly bounded from below by a positive constant.\\
The contradiction follows exactly as in the aforementioned proof.\\
Along the same lines as in the proof of Corollary \ref{cor_uni_conv_disc}, we find the desired topological cylinders and homeomorphisms.
\end{proof}
\section{Description of the limit spaces}\label{sec_main}
This section is devoted to the proof that all limit spaces meet the description in Theorem \ref{trm_main}.\\
We define $\mathcal{M}\p{S,\varepsilon,C}$ as the class of all surfaces with bounded curvature $X$ in  $\mathcal{M}\p{S,\varepsilon}$ such that the measure $\lvert\omega_X\rvert$ is bounded by $C$.
\subsection{Bounded curvature of maximal cyclic subsets}
As a first step, we show that the maximal cyclic subsets of the limit space are surfaces with bounded curvature.\\
In the following we write $X^{int}$ for a metric space $X$ equipped with its induced length metric.\\
By Theorem \ref{trm_topo_struc}, Theorem \ref{trm_uni_surf_curv} and Proposition \ref{prop_approx_cylinder} we have the following result:
\begin{cor}\label{cor_curv_est}
Let $\seq{X}{n}$ be a sequence in $\mathcal{M}\p{S,\varepsilon,C}$, $\GH{\p{X_n,\lvert\omega_{X_n}\rvert}}{\p{X,\lvert\omega\rvert_\infty}}$
and $\GH{\p{X_n,\mathcal{H}^2_{X_n}}}{\p{X,\mathcal{H}^2_\infty}}$. Moreover let $T\subset X$ be a maximal cyclic subset and $U\coloneqq T\setminus Cut_X$. Then the following statements apply: \begin{itemize}
\item[1)] $U^{int}$ is a surface with bounded curvature and $\lvert\omega_{U^{int}}\rvert(V)\le \lvert\omega\rvert_\infty(V)$ for every connected open subset $V\subset U^{int}$. 
\item[2)] $\mathcal{H}^2_X(V)\le\mathcal{H}^2_\infty(V)$ for every connected open subset $V\subset T$. 
\end{itemize} 
\end{cor}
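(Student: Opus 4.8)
The plan is to read off the local geometry of $U^{int}$ from the uniform approximations that are already available, glue the resulting local models into a single surface with bounded curvature, and then extract the two measure estimates from the lower semicontinuity of mass under the measured Gromov--Hausdorff convergence. I would begin by fixing the topology. By Theorem~\ref{trm_topo_struc} the maximal cyclic subset $T$ is homeomorphic to $\mathbb{S}^2$ or to $S$, and $Cut_X\cap T$ is a finite set $F$; hence $U=T\setminus Cut_X=T\setminus F$ is a closed surface with finitely many points removed. Combining Proposition~\ref{prop_Riem_area} with the area estimate from Theorem~\ref{trm_uni_surf_curv}~2b) one checks that $T$ has at most quadratic area growth, so Theorem~\ref{trm_length_surf_curv}~1) guarantees that $U^{int}$ is a length space. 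It then suffices to equip every point of $U$ with a neighbourhood isometric to an open subset of a surface with bounded curvature; the global conclusion follows because the total curvature will be controlled by the finite measure $\lvert\omega\rvert_\infty$.

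For the bulk I would invoke Theorem~\ref{trm_loc_uni_appr}: for $p\in U\setminus\overline{Cut_X}$ one has $\uni{X_n}{X}$ on a neighbourhood of $p$ with $\conv{d\p{\varphi_n,id_X}}{0}$, so Theorem~\ref{trm_uni_surf_curv} yields $R>0$ such that $B_R(p)$ is isometric to an open subset of a surface with bounded curvature $Y$, together with the estimates 2a) and 2b). For small radii these balls lie in $U$ and the shortest connections stay inside, so the ambient and the induced length metric agree locally and the local model is the one for $U^{int}$. To reach the finitely many ends created by $F$, around each $p\in F$ I would choose a collar, that is, a topological cylinder $Z\subset T$ separating $p$ from $T\setminus Z$, with $Cut_X\cap Z=\emptyset$ and boundary components of diameter less than $\varepsilon$. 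Proposition~\ref{prop_approx_cylinder} then produces $\uni{Z'_n}{Z'}$ on subcylinders $Z'\subset Z^0$, and Theorem~\ref{trm_uni_surf_curv} again supplies local models and estimates on their interiors, so that bulk and collars together cover a full punctured neighbourhood of every end.

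The two inequalities would follow locally from 2a) and 2b). Starting from $\lvert\omega_Y\rvert\p{B_r(p)}\le\liminf_{n}\lvert\omega_{X_n}\rvert\p{\varphi_n\p{B_r(p)}}$, using $\conv{d\p{\varphi_n,id_X}}{0}$ to put $\varphi_n\p{B_r(p)}$ inside a slightly larger ball, and applying the portmanteau inequality for the weak limit $\lvert\omega\rvert_\infty$ on closed sets, I would bound $\lvert\omega_{U^{int}}\rvert$ of a small ball by $\lvert\omega\rvert_\infty$ of a slightly larger one. Inner regularity of the finite measure $\lvert\omega_{U^{int}}\rvert$ and an exhaustion of a connected open $V$ by finitely many such balls, chosen so as to avoid double counting, then upgrade this to $\lvert\omega_{U^{int}}\rvert(V)\le\lvert\omega\rvert_\infty(V)$. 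The area estimate is obtained verbatim with $\mathcal{H}^2$ in place of $\lvert\omega\rvert$, using that the local model is isometric to $\p{X,\mathcal{H}^2_X}$ on $B_r(p)$, which gives $\mathcal{H}^2_X(V)\le\mathcal{H}^2_\infty(V)$.

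The hard part will be the set $U\cap\overline{Cut_X}$, at which cut points of $X$ accumulate, for example the linking points of the smaller cyclic elements attached to the structure. There Theorem~\ref{trm_loc_uni_appr} furnishes no uniform convergence, so the local models of the previous paragraph are unavailable. My plan is to treat this set as removable: since the entire picture is two--dimensional, deleting a nowhere dense, $\mathcal{H}^2$--negligible set of points does not alter the induced length metric of $U$ nearby, while Proposition~\ref{prop_Riem_area} and the area estimate 2b) again provide at most quadratic area growth. Following the reasoning of Theorem~\ref{trm_length_surf_curv}, the surface--with--bounded--curvature structure established on $U\setminus\overline{Cut_X}$ then extends across $U\cap\overline{Cut_X}$, with the curvature still dominated by $\lvert\omega\rvert_\infty$. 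Verifying that $U\cap\overline{Cut_X}$ is genuinely this thin, and that the collars around the points of $F$ can be chosen disjoint from the accumulating cut points, is the delicate point on which the whole argument turns.
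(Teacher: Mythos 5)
Your overall strategy matches the paper's intended derivation: the paper states this corollary as a direct consequence of Theorem \ref{trm_topo_struc} (finitely many cut points in $T$, topology of $T$), the local uniform approximations with $\conv{d\p{\varphi_n,id_X}}{0}$ (it cites Proposition \ref{prop_approx_cylinder}; small annuli around a point of $U$, or around a puncture, do the same job as your discs plus collars), and Theorem \ref{trm_uni_surf_curv} for the local bounded-curvature models and the semicontinuity estimates 2a), 2b), which combine with the weak convergence of the measures exactly as in your third paragraph. The genuine problem is your last paragraph, i.e.\ precisely the point you call delicate. The fallback you propose there would not work: Theorem \ref{trm_length_surf_curv} removes only a \emph{finite} set, and neither it nor anything else in the paper lets you extend a bounded-curvature structure across an infinite closed set; ``nowhere dense and $\mathcal{H}^2$-negligible'' is not a removability criterion in this setting (curvature of the approximating surfaces could a priori concentrate on such a set, and its deletion could change the induced length metric). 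As written, your argument turns on an unverified and, as a method, unsound step.

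Fortunately the hard set is empty, which rescues the proof: one has $\overline{Cut_X}\cap T=Cut_X\cap T$, a finite set by Theorem \ref{trm_topo_struc}. Indeed, by cyclic element theory every connected component $A$ of $X\setminus T$ has exactly one boundary point $a\in T$, and $a$ is a cut point of $X$; so the attaching points form a subset of the finite set $Cut_X\cap T$. Since $X$ is a length space, every path from $x\in A$ to $q\in T$ passes through $a$, whence $d\p{x,q}\ge d\p{a,q}$. Now let cut points $c_k$ converge to some $q\in T$: if infinitely many $c_k$ lie in $T$, then $q\in Cut_X\cap T$ because that set is finite; otherwise infinitely many lie in components attached at one fixed $a$, and $d\p{c_k,q}\ge d\p{a,q}$ forces $q=a\in Cut_X$. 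Hence every $p\in U$ satisfies $p\notin\overline{Cut_X}$, Theorem \ref{trm_loc_uni_appr} (equivalently, the disc and cylinder approximations) applies at \emph{every} point of $U$, your collars are automatically disjoint from $Cut_X$, and the removability step disappears entirely; with this observation your proof is essentially the paper's. Two minor remarks: $U^{int}$ carries the induced length metric by definition, so invoking Theorem \ref{trm_length_surf_curv} 1) for the length-space property is unnecessary at this stage (the paper uses that theorem only later, to extend across the finitely many punctures); and the quadratic area growth you use is what the paper derives \emph{after} this corollary in Proposition \ref{prop_area_growth} --- your independent derivation from Proposition \ref{prop_Riem_area} and Theorem \ref{trm_uni_surf_curv} 2b) avoids circularity, but it is not needed here.
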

\noindent
From the aforementioned corollary, Proposition \ref{prop_Riem_area} and the fact that $X_n$ can be obtained as the uniform limit of spaces in $\mathcal{R}\p{S,2C}$ \comp{p. 144}{Res93} we derive a bound on the area growth of open balls:
\begin{prop}\label{prop_area_growth}
Let $\seq{X}{n}$ be a sequence in $\mathcal{M}\p{S,\varepsilon,C}$ and $\GH{X_n}{X}$. Then the area growth of open balls in $X$ is at most quadratic. 
\end{prop}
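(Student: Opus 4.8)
The plan is to transport the quadratic area bound that is available on Riemannian $2$-manifolds (Proposition \ref{prop_Riem_area}) down to $X$ through two layers of convergence, and to correct for the defect between the weak limit of the area measures and the intrinsic measure $\mathcal{H}^2_X$ by means of Corollary \ref{cor_curv_est}. First I dispose of the degenerate case: if $X$ is a single point, then $\mathcal{H}^2\p{B_r(x)}=\mathcal{H}^2(X)=0$ for all $r$ and the claim is trivial. So I may assume $diam(X)>0$, fix $R_0\coloneqq\tfrac14 diam(X)$, and use that $diam\p{X_n}\to diam(X)$, so that $diam\p{X_n}\ge\tfrac12 diam(X)$ for all large $n$.

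First I would establish a uniform quadratic bound on the spaces $X_n$ themselves. Each $X_n\in\mathcal{M}\p{S,\varepsilon,C}$ is, by \cite[p. 144]{Res93}, the uniform limit of Riemannian $2$-manifolds $R_{n,m}\in\mathcal{R}\p{S,2C}$ as $m\to\infty$, with homeomorphisms $\psi_{n,m}\colon X_n\to R_{n,m}$ satisfying $\conv{\delta_{n,m}\coloneqq dis\p{\psi_{n,m}}}{0}$. Proposition \ref{prop_Riem_area} gives $\mathcal{H}^2_{R_{n,m}}\p{B_\rho(z)}\le\p{\pi+C}\rho^2$ for all $z$ and all $\rho<\tfrac12 diam\p{R_{n,m}}$. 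Since $\psi_{n,m}$ distorts distances by at most $\delta_{n,m}$, we have $\psi_{n,m}\p{B_r^{X_n}(y)}\subset B_{r+\delta_{n,m}}\p{\psi_{n,m}(y)}$, and for $r<R_0$ this ball is admissible for large $m$ because $\delta_{n,m}\to0$ and $diam\p{R_{n,m}}\to diam\p{X_n}\ge\tfrac12 diam(X)$. Invoking the lower semicontinuity of area under uniform convergence with uniformly bounded total absolute curvature (Theorem \ref{trm_uni_surf_curv} together with the area convergence of \cite[pp. 88, 141]{AZ67}), I obtain $\mathcal{H}^2_{X_n}\p{B_r^{X_n}(y)}\le\liminf_{m\to\infty}\mathcal{H}^2_{R_{n,m}}\p{\psi_{n,m}\p{B_r^{X_n}(y)}}\le\p{\pi+C}r^2$ for every $y\in X_n$ and $r<R_0$.

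Since the sequence is Gromov-Hausdorff convergent it is uniformly totally bounded, so each $X_n$ is covered by a number $N(R_0)$ of balls of radius $\tfrac12 R_0$ independent of $n$; with the bound just obtained this shows that the total areas $\mathcal{H}^2\p{X_n}$ are uniformly bounded. After passing to a subsequence I may therefore assume $\GH{\p{X_n,\lvert\omega_{X_n}\rvert}}{\p{X,\lvert\omega\rvert_\infty}}$ and $\GH{\p{X_n,\mathcal{H}^2_{X_n}}}{\p{X,\mathcal{H}^2_\infty}}$, which are exactly the hypotheses of Corollary \ref{cor_curv_est}. Working in the common ambient space $Y$ of Proposition \ref{prop_GH_H} and using lower semicontinuity of weak convergence on open sets, I get $\mathcal{H}^2_\infty\p{B_r^Y(x)}\le\liminf_n\mathcal{H}^2_{X_n}\p{B_r^Y(x)\cap X_n}$; choosing $x_n\in X_n$ with $d_Y(x,x_n)\to0$, the intersection is contained in $B^{X_n}_{r+d_Y(x,x_n)}(x_n)$, so the $X_n$-bound yields $\mathcal{H}^2_\infty\p{B_r^X(x)}\le\p{\pi+C}r^2$ for all $x\in X$ and $r<R_0$.

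It remains to descend from the weak limit $\mathcal{H}^2_\infty$ to the intrinsic measure $\mathcal{H}^2_X$, and this is where the main obstacle lies, since a priori $\mathcal{H}^2_\infty$ only dominates $\mathcal{H}^2_X$ and may carry extra mass concentrated at the cut points. This is precisely what Corollary \ref{cor_curv_est} part 2) controls: for a maximal cyclic subset $T$ and $U_T\coloneqq T\setminus Cut_X$ (which is open in $T$, as $T$ contains only finitely many cut points by Theorem \ref{trm_topo_struc}) it gives $\mathcal{H}^2_X(V)\le\mathcal{H}^2_\infty(V)$ for connected open $V\subset T$, and since $X$ is a Peano space every open subset of $T$ is a countable disjoint union of such components, so the inequality passes to $B_r(x)\cap U_T$. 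The sets $U_T$ are pairwise disjoint because distinct maximal cyclic subsets meet only in a cut point, and by Theorem \ref{trm_topo_struc} the complement $X\setminus\bigcup_T U_T$ is contained in countably many cut points together with the one-dimensional non-cyclic part, hence is $\mathcal{H}^2_X$-null. Therefore, for $r<R_0$,
\begin{align*}
\mathcal{H}^2_X\p{B_r(x)}=\sum_T\mathcal{H}^2_X\p{B_r(x)\cap U_T}&\le\sum_T\mathcal{H}^2_\infty\p{B_r(x)\cap U_T}\\
&=\mathcal{H}^2_\infty\p{B_r(x)\cap\bigcup_T U_T}\le\p{\pi+C}r^2,
\end{align*}
the middle equality using the disjointness of the $U_T$. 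With $A\coloneqq\pi+C$ this is the desired at most quadratic growth. The two points that will require the most care are the uniformity (in $n$ and in the center $y$) of the radius $R_0$ in the semicontinuity step, which I resolve through the global area convergence of \cite[pp. 88, 141]{AZ67} rather than the pointwise radius of Theorem \ref{trm_uni_surf_curv}, and the $\mathcal{H}^2_X$-nullity of the non-cyclic part, which follows from the structural description in Theorem \ref{trm_topo_struc}.
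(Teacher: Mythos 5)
Your overall route is exactly the one the paper intends with its one-line derivation: Proposition \ref{prop_Riem_area} applied to the Reshetnyak approximations of each $X_n$ by spaces in $\mathcal{R}\p{S,2C}$, area semicontinuity under uniform convergence to get $\mathcal{H}^2_{X_n}\p{B_r(y)}\le\p{\pi+C}r^2$ for $r<R_0$ uniformly in $n$ and $y$, weak lower semicontinuity on open sets in the ambient space of Proposition \ref{prop_GH_H} to transfer this to $\mathcal{H}^2_\infty$, and Corollary \ref{cor_curv_est} part 2) to descend to $\mathcal{H}^2_X$ on the cyclic part. Those steps are carried out correctly: the constant, the admissibility of the slightly enlarged balls for large $m$, the uniform total area bound justifying the passage to measured Gromov--Hausdorff convergence, the component-wise application of the corollary, and the disjointness of the sets $U_T$ are all fine, and passing to a subsequence is harmless since the conclusion concerns $X$ alone.

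The one genuine gap is the final nullity claim. You assert that $X\setminus\bigcup_T U_T$ is contained in countably many cut points together with the ``one-dimensional non-cyclic part, hence is $\mathcal{H}^2_X$-null'', citing Theorem \ref{trm_topo_struc}. That theorem says nothing about the non-cyclic part, and, more importantly, topological dimension one does not imply vanishing $\mathcal{H}^2$ in a metric space: a snowflaked arc such as $([0,1],\sqrt{\lvert s-t\rvert})$ is one-dimensional with positive (indeed infinite locally finite) $\mathcal{H}^2$. So as written this step is a non sequitur, and it is precisely the point where metric, not merely topological, input is required. The claim is true and repairable with tools already at hand: by cyclic element theory every point outside all maximal cyclic subsets is a cut point or endpoint of $X$; since $X$ is a length space, a cut point $p$ separating $a$ from $b$ forces $d_X(a,b)=d_X(a,p)+d_X(p,b)$, because every path from $a$ to $b$ passes through $p$; and by part 4) of Theorem \ref{trm_topo_struc} only finitely many points have $ind_X\ge 3$. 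Hence, after removing those finitely many points, each connected ``chain'' of the non-cyclic part is linearly ordered by separation, and for a basepoint $a$ the map $z\mapsto d_X(a,z)$ embeds it isometrically into $\mathbb{R}$, so it is $\mathcal{H}^2_X$-null; together with the countably many cut points lying in the $T_n$ (finitely many per $T_n$ by part 3)) this yields $\mathcal{H}^2_X\p{X\setminus\bigcup_T U_T}=0$ and closes your argument.
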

\noindent
Due to Theorem \ref{trm_topo_struc} every maximal cyclic subset contains only finitely many cut points of $X$. Using Theorem \ref{trm_length_surf_curv} and the last two results, we conclude that the maximal cyclic subsets are surfaces with bounded curvature. Moreover we obtain a bound on the total absolute curvature of these surfaces.
\begin{cor}\label{cor_surf_formula}
Let $\seq{X}{n}$ be a sequence in $\mathcal{M}\p{S,\varepsilon,C}$ and $\GH{\p{X_n,\lvert\omega_{X_n}\rvert}}{\p{X,\lvert\omega\rvert_\infty}}$. Then the following statements apply:
\begin{itemize}
\item[1)] Every maximal cyclic subset of $X$ is a surface with bounded curvature.
\item[2)] Let $\seq{T}{n}$ be an enumeration of the maximal cyclic subsets of $X$. Then the following inequality holds:
\begin{align*}
\sum\limits_{n=1}^{\infty}\lvert\omega_{T_n}\rvert\p{T_n\setminus Cut_X}\le \lvert\omega \rvert_\infty\p{X\setminus Sing_X}. \end{align*}  
\end{itemize}
\end{cor}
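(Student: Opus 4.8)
The plan is to read both assertions off Corollary \ref{cor_curv_est}, Proposition \ref{prop_area_growth} and Theorem \ref{trm_length_surf_curv}, following the sketch in the paragraph preceding the corollary. For statement 1) I fix a maximal cyclic subset $T$ and equip it with its induced length metric $T^{int}$. By Theorem \ref{trm_topo_struc} the space $T^{int}$ is homeomorphic to a closed surface and the set $F\coloneqq T\cap Cut_X$ is finite. Writing $U\coloneqq T\setminus Cut_X$, I would first check that the length metric $X$ induces on $U$ agrees with the one $T^{int}$ induces on $U$: since distinct maximal cyclic subsets meet only in cut points, any path in $X$ joining two points of $U$ can be replaced by a path inside $T$ of no greater length by deleting its excursions through cut points, so $U^{int}=T^{int}\setminus F$ as metric spaces. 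Because intrinsic distances dominate extrinsic ones, balls in $T^{int}$ sit inside balls of the same radius in $X$, so the quadratic area growth from Proposition \ref{prop_area_growth} descends to $T^{int}$. By Corollary \ref{cor_curv_est} 1) the space $U^{int}$ is a surface with bounded curvature, and the estimate $\lvert\omega_{U^{int}}\rvert(V)\le\lvert\omega\rvert_\infty(V)$ with $V=U^{int}$ shows $\lvert\omega_{U^{int}}\rvert$ is finite, as $\lvert\omega\rvert_\infty$ is a finite measure. Theorem \ref{trm_length_surf_curv} 2) then gives that $T^{int}$ is a surface with bounded curvature.

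For statement 2) I would first note that on the open set $U_n\coloneqq T_n\setminus Cut_X$ the curvature measure of $T_n$ coincides with that of $U_n^{int}$, the curvature measure being local and unaffected by deleting the finite set $F_n$ off the open part. Hence Corollary \ref{cor_curv_est} 1), applied with $V=U_n$, yields $\lvert\omega_{T_n}\rvert\p{T_n\setminus Cut_X}\le\lvert\omega\rvert_\infty\p{T_n\setminus Cut_X}$ for every $n$. I would then invoke two facts from cyclic element theory: distinct maximal cyclic subsets intersect only in cut points of $X$, so the $U_n$ are pairwise disjoint; and no $U_n$ contains a cut point or an endpoint, since each $T_n$ is a closed surface and an endpoint cannot lie on a surface-type maximal cyclic subset. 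Thus the $U_n$ are pairwise disjoint Borel subsets of $X\setminus Sing_X$, and summing the pointwise estimate while using countable additivity and monotonicity of the finite positive measure $\lvert\omega\rvert_\infty$ gives
\begin{align*}
\sum_{n=1}^{\infty}\lvert\omega_{T_n}\rvert\p{T_n\setminus Cut_X}&\le\sum_{n=1}^{\infty}\lvert\omega\rvert_\infty\p{U_n}=\lvert\omega\rvert_\infty\p{\bigcup_{n=1}^{\infty}U_n}\\
&\le\lvert\omega\rvert_\infty\p{X\setminus Sing_X},
\end{align*}
which is the desired inequality.

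The main obstacle I anticipate is the bookkeeping in statement 1): one must make sure that the three structures involved — the metric $X$ induces on $U$, the metric $T^{int}$ induces on $U$, and the abstract surface-with-bounded-curvature structure supplied by Corollary \ref{cor_curv_est} — genuinely agree, and that the quadratic area growth really descends to $T^{int}$ so that Theorem \ref{trm_length_surf_curv} applies. Once these identifications are secured, both statements follow by routine measure-theoretic manipulations; the only further subtlety, in statement 2), is verifying via cyclic element theory that the sets $T_n\setminus Cut_X$ avoid all of $Sing_X$ and are mutually disjoint.
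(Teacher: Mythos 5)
Your proposal is correct and follows essentially the same route as the paper, which derives the corollary in one paragraph from exactly the ingredients you use: Theorem \ref{trm_topo_struc} (finitely many cut points per maximal cyclic subset), Corollary \ref{cor_curv_est}, Proposition \ref{prop_area_growth} and Theorem \ref{trm_length_surf_curv}, with the measure-theoretic summation over the pairwise disjoint sets $T_n\setminus Cut_X$ implicit in the paper. One small remark: your phrase ``intrinsic distances dominate extrinsic ones'' would by itself give the Hausdorff-measure comparison in the wrong direction (a larger metric yields a larger $\mathcal{H}^2$), but your preceding excursion-deletion argument shows $d_{T^{int}}=d_X|_T$, which makes the descent of the quadratic area growth to $T^{int}$ legitimate.
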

\subsection{Geometry of singular points}
In this subsection we study the geometry of the singular points in the limit space. More precisely, we show the following inequality:
\begin{prop}\label{prop_sing_formula}
Let $\seq{X}{n}$ be a sequence in $\mathcal{M}\p{S,\varepsilon,C}$ and $\GH{\p{X_n,\lvert\omega_{X_n}\rvert}}{\p{X,\lvert\omega\rvert_\infty}}$. Moreover let $p\in Sing_X$ and $\seq{T}{n}$ be an enumeration of the maximal cyclic subsets of $X$. Then the following inequality holds:
\begin{align*}
2\pi\lvert ind(p)-2\rvert+\sum_{n=1}^{\infty}\mathbbold{1}_{T_n}(p)\theta_{T_n}(p)\le \lvert\omega\rvert_\infty(p).\end{align*}
\end{prop}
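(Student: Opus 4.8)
\noindent
The plan is to localize at $p$ and to extract both terms on the left-hand side from a single application of the Gauss--Bonnet theorem to a small ``junction region'' in $X_n$. Set $k\coloneqq ind(p)$; by Theorem \ref{trm_topo_struc} this number is finite, and the components of $X\setminus\cp{p}$ accumulating at $p$ split into finitely many \emph{sheet branches}, one for each maximal cyclic subset $T_i\ni p$ (say $i=1,\dots,m$), and finitely many \emph{arc branches} stemming from the dendritic part of $X$, with $k=m+a$. For each branch I would enclose $p$ by a simple closed curve in $X_n$, chosen so that the $k$ resulting curves bound a connected subsurface $R_n\subset X_n$ collapsing to a neighborhood of $p$. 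Using the uniform local connectedness of $\seq{X}{n}$ together with Lemma \ref{lem_lim_int}, one checks that $R_n$ may be taken connected with exactly $k$ boundary circles, whence $\chi(R_n)\le 2-k$. Writing $\partial_{1,n},\dots,\partial_{m,n}$ for the sheet curves and $\beta_{1,n},\dots,\beta_{a,n}$ for the arc curves, and measuring all rotations from the side of $R_n$, the Gauss--Bonnet theorem yields
\begin{align*}
\omega_{X_n}\p{R_n^0}=2\pi\chi(R_n)-\sum_{i=1}^{m}\tau_{R_n}\p{\partial_{i,n}}-\sum_{j=1}^{a}\tau_{R_n}\p{\beta_{j,n}}.
\end{align*}

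\noindent
For the sheet branches I would build $\partial_{i,n}$ from the cylinder machinery. Fixing a thin annulus $Z_i\subset T_i$ encircling $p$, free of peak points and avoiding both $p$ and the remaining (finitely many) cut points on $T_i$, Proposition \ref{prop_approx_cylinder} produces cylinders in $X_n$ converging uniformly to $Z_i$ with $\conv{d(\varphi_n,id_X)}{0}$, and Theorem \ref{trm_rot_lift} transports the rotation of the inner boundary circle $J_i$ of $Z_i$ to $X_n$ up to an error bounded by the curvature mass accumulating on the curve. Since $R_n$ lies on the $p$--side of $\partial_{i,n}$, and within $T_i$ that side is a small disc $D_i\ni p$, Gauss--Bonnet on the shrinking disc $D_i$ gives $\tau_{R_n}\p{\partial_{i,n}}\to\theta_{T_i}(p)$ in the limit $J_i\to\cp{p}$ (indeed $\omega_{T_i}\p{D_i^0}\to\omega_{T_i}\p{\cp{p}}$, so the rotation tends to $2\pi-\omega_{T_i}\p{\cp{p}}=\theta_{T_i}(p)$). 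Because $\lvert\omega\rvert_\infty$ is finite, the annuli can be placed at radii carrying arbitrarily little curvature, so the transfer error from Theorem \ref{trm_rot_lift} is negligible and $\sum_i\tau_{R_n}\p{\partial_{i,n}}$ converges to within $\eta$ of $\sum_i\theta_{T_i}(p)$ for any prescribed $\eta>0$.

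\noindent
For the arc branches I would take $\beta_{j,n}$ to be a shortest non-contractible loop in the thin tube approximating the arc near $p$, exactly as in the proof of Theorem \ref{trm_length_surf_curv}; such loops are quasigeodesic, so their rotations tend to $0$ and $\sum_j\tau_{R_n}\p{\beta_{j,n}}\to0$. Carrying out the whole construction inside a neighborhood $U$ of $p$ with $\lvert\omega\rvert_\infty\p{U\setminus\cp{p}}<\eta$, the measured convergence $\GH{\p{X_n,\lvert\omega_{X_n}\rvert}}{\p{X,\lvert\omega\rvert_\infty}}$ gives $\limsup_n\lvert\omega_{X_n}\rvert\p{R_n^0}\le\lvert\omega\rvert_\infty\p{\cp{p}}+\eta$. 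Combining the displayed identity with $\lvert\omega_{X_n}\p{R_n^0}\rvert\le\lvert\omega_{X_n}\rvert\p{R_n^0}$ and $\chi(R_n)\le2-k$, and passing to the limit, I obtain for $k\ge2$
\begin{align*}
\sum_{i=1}^{m}\theta_{T_i}(p)+2\pi(k-2)\le\sum_{i=1}^{m}\theta_{T_i}(p)-2\pi\chi(R_n)\le\lvert\omega\rvert_\infty\p{\cp{p}}+O(\eta),
\end{align*}
where the first inequality uses $\chi(R_n)\le2-k$ and the second is the limit of the Gauss--Bonnet identity together with $\sum_j\tau_{R_n}\p{\beta_{j,n}}\to0$ and the sheet-rotation estimate above. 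Since $2\pi(k-2)=2\pi\lvert ind(p)-2\rvert$ in this range, letting $\eta\to0$ closes the case. The endpoint case $k=1$ is treated directly: then $m=0$ and $R_n$ is a disc enclosing the collapse of $p$, Gauss--Bonnet gives $\omega_{X_n}\p{R_n^0}\to2\pi$, and hence $\lvert\omega\rvert_\infty\p{\cp{p}}\ge2\pi=2\pi\lvert ind(p)-2\rvert$. I expect the genuine difficulty to lie in the arc branches: being $1$--dimensional in $X$, they are not accessible to Proposition \ref{prop_approx_cylinder}, so one must argue by hand that $R_n$ is connected with exactly $k$ boundary circles and that the shortest-loop rotations genuinely vanish in the limit.
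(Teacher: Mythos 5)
Your skeleton coincides with the paper's: the paper proves exactly your junction-region statement as Lemma \ref{lem_approx_sing_surf} (compact subsurfaces $S_n\subset X_n$ with $p_n\in S_n^0$, $\Haus{S_n}{\cp{p}}$, $ind(p)$ boundary circles, $\chi\p{S_n}=2-ind(p)$, and precisely your rotation bounds measured from the side of $p_n$), then concludes via Gauss--Bonnet together with $\lvert\omega_{X_n}\rvert\p{S_n}\le\lvert\omega\rvert_\infty(p)+\delta$ and $\conv{\delta}{0}$, as you do; your sheet-branch treatment (thin annuli carrying little limit curvature and free of peak points, Proposition \ref{prop_approx_cylinder}, Theorem \ref{trm_rot_lift}, Gauss--Bonnet on the shrinking disc $D_i$) is exactly the paper's Lemma \ref{lem_approx_curve}. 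The genuine gap sits where you yourself flag it, and your proposed fix does not work. The claim that a shortest non-contractible loop in the collapsing tube is ``quasigeodesic, so its rotation tends to $0$'' is unjustified: such a minimizer can lie on the boundary of the region in which it is minimized (for instance when the tube flares monotonically), hence is not a free geodesic at all, and even a genuine closed geodesic in a surface with bounded curvature has its rotation controlled only up to the curvature mass sitting on the curve itself. Tellingly, in the proof of Theorem \ref{trm_length_surf_curv} the paper claims only \emph{boundedness}, never smallness, of the rotation of exactly such shortest loops. Producing curves with small rotation is the technical core of the paper: three dedicated lemmas (cylinders $Z_n$ converging to a subinterval with $\Haus{\partial Z_n}{\cp{0,l}}$; extraction of sub-cylinders $Z'_n$ with short piecewise geodesic non-contractible boundaries and quantitative length and separation bounds; then cutting along a geodesic and using the first variation formula with the one-sided mean value theorem to locate, under the hypotheses $c<b$ and $\arccos(\alpha)\ge\frac{\pi}{2}-\delta$, a curve $J$ with $\lvert\tau(J)\rvert\le 4\delta$ from either side), culminating in Corollary \ref{cor_approx_point}. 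Some such selection argument is unavoidable: the uniform budget $C$ only guarantees good positions \emph{somewhere} along the tube, not at every cross-section.

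A second, related gap is your taxonomy $k=m+a$ with ``arc branches stemming from the dendritic part'': a component of $B_r(p)\setminus\cp{p}$ not attached to a maximal cyclic subset through $p$ need not contain any interval; it can instead contain infinitely many maximal cyclic subsets accumulating at $p$ (a chain of small spheres), a case the paper spells out explicitly in the endpoint discussion of Lemma \ref{lem_approx_sing_surf}. There your tube construction has no interval to run in; the paper instead applies Lemma \ref{lem_approx_curve} inside one of the accumulating cyclic subsets, uses its extra output $\lvert\omega_{X_n}\rvert\p{J_n}\le\delta$, and converts the one-sided bound to the side of $p_n$ through the identity relating the two rotations of $J_n$ to the curvature carried by $J_n$. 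Your remaining steps are sound at sketch level --- in particular $\chi\p{R_n}\le 2-k$ for a connected surface with $k$ boundary circles does suffice, where the paper even secures equality via Theorem \ref{trm_topo_struc_free} --- but without the two repairs above the rotation bounds on the non-sheet branches, hence the proposition itself, remain unproved.
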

\noindent
If $p$ is an endpoint, then we have $ind(p)=1$ and $p$ lies in no maximal cyclic subset. Hence the inequality becomes $2\pi\le \lvert\omega\rvert_\infty(p)$.\\For cut points we have $ind(p)\ge 2$ and $p$ lies in at most finitely many maximal cyclic subsets $T_{n_1},\ldots,T_{n_k}$. In this case the inequality becomes
\begin{align*}
2\pi \p{ind(p)-2}+\sum_{i=1}^{k}\theta_{T_{n_i}}(p)\le \lvert\omega\rvert_\infty(p).    
\end{align*}
We will conclude the aforementioned proposition from the following lemma:
\begin{lem}\label{lem_approx_sing_surf}
Let $\seq{X}{n}$ be a sequence in $\mathcal{M}\p{S,\varepsilon,C}$ and $\GH{\p{X_n,\lvert\omega_{X_n}\rvert}}{\p{X,\lvert\omega\rvert_\infty}}$. Moreover let $p\in Sing_X$ and $p_n\in X_n$ with $\conv{p_n}{p}$. We denote the maximal cyclic subsets containing $p$ by $T_1,\ldots,T_N$. Then for every $\delta>0$, after passing to a subsequence, there are compact subsurfaces $S_n\subset X_n$ satisfying the following properties:
\begin{itemize}
\item[1)] We have $p_n\in S_n^0$ and $\Haus{S_n}{\cp{p}}$.
\item[2)]  $S_n$ has $ind(p)$ boundary components $J_{1,n},\ldots, J_{ind(p),n}$ and $\chi\p{S_n}=2-ind(p)$.
\item[3)] If $p$ is a cut point, then the following estimates apply:
\begin{itemize}
\item[a)] $\tau\p{J_{i,n}}\ge \theta_{T_i}(p)-2\delta$ for all $1\le i\le N$
\item[b)] $\tau\p{J_{i,n}}\ge -2\delta$ for all $N+1\le i\le ind(p)$ 
\end{itemize}
where the rotation is measured from the side of $p_n$.
\item[4)] If $p$ is an endpoint, then $\tau\p{J_{1,n}}\le 2\delta$ where the rotation is measured from the side of $p_n$.
\end{itemize} 
\end{lem}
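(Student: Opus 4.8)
The plan is to build $S_n$ branch by branch, following the local cyclic-element structure of $X$ at $p$. Since $p$ is a cut point or endpoint, $X\setminus\cp{p}$ has $ind(p)$ local components; by cyclic element theory exactly $N$ of them contain the maximal cyclic subsets $T_1,\ldots,T_N$ through $p$ (each $T_i\setminus\cp{p}$ is connected, and distinct $T_i$ are separated from one another by $p$), while the remaining $ind(p)-N$ components are dendritic near $p$. For each component I would produce, exactly as in the proofs of Lemma \ref{lem_disc_approx} and Proposition \ref{prop_approx_cylinder}, a simple closed curve $J_{i,n}\subset X_n$ close to $p_n$ that cuts this component off from $p_n$, using Whyburn's curve approximation \cite[p. 413]{Why35} together with the contractibility hypothesis to ensure that each $J_{i,n}$ bounds a disc. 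Taking all the $J_{i,n}$ disjoint and close to $p_n$, they bound a small planar subsurface $S_n\ni p_n$ with $ind(p)$ boundary circles; hence $\chi\p{S_n}=2-ind(p)$, and choosing everything small yields $\Haus{S_n}{\cp{p}}$. This settles statements 1) and 2).

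For the cyclic branches and estimate 3a), fix $i\le N$. By Corollary \ref{cor_surf_formula} the set $T_i$ is a surface with bounded curvature, so near $p$ I choose a simple closed curve $\Gamma_i\subset T_i\setminus Cut_X$ bounding a small disc $D_i\ni p$ (possible since $T_i$ meets $Cut_X$ in a finite set by Theorem \ref{trm_topo_struc}). The Gauss-Bonnet theorem applied to $D_i$ gives $\tau\p{\Gamma_i}=2\pi-\omega_{T_i}\p{D_i^0}$ measured from the side of $p$, and since $\omega_{T_i}\p{D_i^0}\to\omega_{T_i}(p)$ as $D_i$ shrinks, I may arrange $\tau\p{\Gamma_i}\ge\theta_{T_i}(p)-\delta$. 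Placing $\Gamma_i$ in a thin cylinder $Z_i\subset T_i$ around $p$ that avoids $Cut_X$ and peak points, Proposition \ref{prop_approx_cylinder} provides uniform convergence on $Z_i$, and Theorem \ref{trm_rot_lift} transports $\Gamma_i$ to the desired $J_{i,n}$ with rotation from the side of $p_n$ satisfying $\lvert\tau\p{\Gamma_i}-\lim_n\tau\p{J_{i,n}}\rvert\le\lim_n\lvert\omega_{X_n}\rvert\p{J_{i,n}}$; choosing $\Gamma_i$ off the (at most countably many) atoms of $\lvert\omega\rvert_\infty$ makes this error $\le\delta$, so $\tau\p{J_{i,n}}\ge\theta_{T_i}(p)-2\delta$.

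The main obstacle is estimating the rotations along the \emph{collapsing} directions, i.e.\ the dendritic branches (3b) and the single end-cap of an endpoint (4), where there is no limit surface to compare with and Theorem \ref{trm_rot_lift} no longer applies. Here each cutting curve lies in a region of $X_n$ that collapses to a $1$-dimensional subset of $X$. For a dendritic branch the curve $J_{j,n}$ is a cross-cut of a through-neck, and a thin collar on the side of $p_n$ is a topological cylinder; Gauss-Bonnet on this collar together with the measured convergence $\GH{\p{X_n,\lvert\omega_{X_n}\rvert}}{\p{X,\lvert\omega\rvert_\infty}}$ forces the curvature in the collapsing collar to $0$ (the limiting arc being $\lvert\omega\rvert_\infty$-null away from $p$), whence $\tau\p{J_{j,n}}\to 0$ from the side of $p_n$ and in particular $\ge-2\delta$. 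For an endpoint the curve $J_{1,n}$ instead bounds the collapsing \emph{disc} $S_n$, and Gauss-Bonnet gives $\tau\p{J_{1,n}}=2\pi-\omega_{X_n}\p{S_n^0}$ from the side of $p_n$; the delicate point is that $\omega_{X_n}\p{S_n^0}\to2\pi$. I would derive this from the fact that $p$ is an endpoint, so $X$ is $1$-dimensional near $p$: the spikes $S_n$ must therefore degenerate to an arc rather than to a $2$-dimensional cone, which pins the effective cone angle of the tip to $0$ and hence $\tau\p{J_{1,n}}\to0\le2\delta$.

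Making this collapse-to-angle-$0$ step quantitative, via a rotation estimate for the boundary of a disc that Hausdorff-converges to a point while the ambient spaces remain surfaces of uniformly bounded total absolute curvature, is where the real work lies. The remaining bookkeeping, namely the two-sided identity relating the left and right rotations of each $J_{i,n}$ to $\omega_{X_n}\p{J_{i,n}}$ and the assembly of the individual branches into the single subsurface $S_n$, is then routine; the proposition itself follows afterwards by summing the rotations over the boundary of $S_n$ via Gauss-Bonnet and passing to the limit in $n$ and $\delta$.
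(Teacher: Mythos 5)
Your treatment of statements 1), 2) and the cyclic-branch estimate 3a) is essentially the paper's: the paper packages your cylinder argument as its Lemma \ref{lem_approx_curve}, whose proof is exactly the route you describe (a distance circle around $p$ with rotation $\ge-\delta$, chosen with $\lvert\omega\rvert_\infty(J)=0$ to avoid atoms, a thin separating cylinder of small $\lvert\omega\rvert_\infty$-measure, Proposition \ref{prop_approx_cylinder}, then Theorem \ref{trm_rot_lift}). The first genuine gap is structural: your claim that the remaining $ind(p)-N$ branches are ``dendritic near $p$'', and likewise that an endpoint has a $1$-dimensional neighborhood, is false. A branch at $p$ containing no maximal cyclic subset \emph{through} $p$ can still contain infinitely many maximal cyclic subsets (small spheres) accumulating at $p$ --- a string of beads --- and then at no scale is there an interval to collapse onto, nor a limit surface to lift rotations from. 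The paper treats this dichotomy explicitly (in the endpoint case: ``$A_1$ is isometric to some open interval or $A_1$ contains infinitely many maximal cyclic subsets''); in the beads case it applies Lemma \ref{lem_approx_curve} to a disc inside a nearby sphere, which yields in addition $\lvert\omega_{X_n}\rvert\p{J_{n}}\le\delta$, and then controls the rotation seen from the side of $p_n$ via the identity relating the rotations of $J_n$ from its two sides to $\omega_{X_n}\p{J_n}$. Your proposal has no mechanism at all for these branches, and they occur in both 3b) and 4).

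The second gap is the one you flag yourself, and it cannot be waved through: for the interval-like branches, Gauss--Bonnet on a collapsing collar $Z'_n$ only yields $\tau\p{\partial_{1,n}}+\tau\p{\partial_{2,n}}=-\omega_{X_n}\p{\p{Z'_n}^0}$, i.e.\ it controls the \emph{sum} of the two boundary rotations, which may vanish while the individual rotations blow up with opposite signs; a uniform bound on $\lvert\omega_{X_n}\rvert$ does not dominate the rotation of an arbitrary curve, so weak convergence of the curvature measures to an $\lvert\omega\rvert_\infty$-null arc does not force $\tau\p{J_{j,n}}\to 0$. Producing \emph{one} cross curve with small rotation from \emph{both} sides is precisely the content of the paper's subsection ``Approximation of points'': it extracts cylinders $Z_n$ Gromov--Hausdorff collapsing to an interval, inside them sub-cylinders with short piecewise geodesic non-contractible boundary components (built at the contractibility scale $\varepsilon_n$), and then cuts the cylinder along a minimizing geodesic and runs a first-variation plus one-sided mean-value-theorem argument on the distance function $d_{\gamma_1}$ to find a parameter with $\lvert\partial^+d_{\gamma_1}\rvert\le\alpha$, whence a curve $J$ with $\lvert\tau(J)\rvert\le 4\delta$ independently of the side (Corollary \ref{cor_approx_point}). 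Your endpoint heuristic (``the effective cone angle of the tip is pinned to $0$'') is a restatement of the desired conclusion, not an argument; the paper instead reduces case 4) to the same two alternatives (interval via Corollary \ref{cor_approx_point}, beads via Lemma \ref{lem_approx_curve} and the two-sided identity). So as it stands the proposal establishes 1), 2), 3a) but not 3b) or 4).
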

\noindent
First we explain how to derive the proposition from the lemma:\\
We start with the case that $p$ is a cut point. Since $\Haus{S_n}{\cp{p}}$, after passing to a subsequence, we may assume that 
\begin{align*}
\lvert\omega_{X_n}\rvert\p{S_n}\le\lvert\omega\rvert_\infty(p)+\delta.    
\end{align*}
From the identity for $\chi\p{S_n}$ and
the Gauss-Bonnet theorem we derive
\begin{align*}
\sum_{i=1}^{ind(p)}\tau\p{J_{i,n}}\le2\pi\p{2-ind(p)}+\lvert\omega\rvert_\infty(p)+\delta   
\end{align*}
where the rotation is measured from the side of $p_n$. Due to the estimates for the rotation of the boundary components we derive
\begin{align*}
2\pi\p{ind(p)-2}+\sum_{i=1}^{N}\theta_{T_i}(p)\le \lvert\omega\rvert_\infty(p)+\p{2\cdot ind(p)+1}\delta. 
\end{align*}
Taking the limit $\conv{\delta}{0}$, we get the desired inequality.\\
If $p$ is an endpoint, then we have $ind(p)=1$ and the Gauss-Bonnet theorem implies 
\begin{align*}
2\pi\le \tau\p{J_{1,n}}+\lvert\omega\rvert_\infty(p)+\delta. 
\end{align*} 
By the estimate for the rotation of the boundary component we get 
\begin{align*}
2\pi\le\lvert\omega\rvert_\infty(p)+3\delta.     
\end{align*}
As before, taking the limit $\conv{\delta}{0}$, we obtain the desired inequality.\\
Now we turn to the proof of the lemma:
\subsubsection{Approximation of curves}
First we prove that certain simple closed curves $J$ in the limit space admit Hausdorff approximations by simple closed curves whose rotations converge to the rotation of $J$: 
\begin{lem}\label{lem_approx_curve}
Let $\seq{X}{n}$ be a sequence in $\mathcal{M}\p{S,\varepsilon,C}$ and $\GH{\p{X_n,\lvert\omega_{X_n}\rvert}}{\p{X,\lvert\omega\rvert_\infty}}$. Moreover let $D\subset X$ be an open topological disc, $p\in D$ with $Cut_X\cap \p{D\setminus\cp{p}}=\emptyset$ and $p_n\in X_n$ with $\conv{p_n}{p}$. Then for every $\delta>0$, after passing to a subsequence, the following statements apply:
\begin{itemize}
\item[1)] There is a simple closed curve $J\subset D$ such that $J$ bounds a closed topological disc in $D$ containing $p$. Moreover we have $\tau(J)\ge-\delta$ where the rotation is measured from the side of $p$.
\item[2)] There are separating simple closed curves $J_n\subset X_n$ with $p_n\not\in J_n$, $\Haus{J_n}{J}$, $\lvert \omega_{X_n}\rvert\p{J_n}\le\delta$ and $\tau\p{J_n}\ge \tau(J)-\delta$ where the rotation is measured from the side of $p_n$ and $p$ respectively.
\end{itemize} 
\begin{proof}
There is some $R\in\left(0,\frac{\varepsilon}{2}\right]$ such that for all $0<r<R$ the ball $\bar{B}_r(p)$ is a closed topological disc in $D$ whose surface boundary coincides with $\partial B_r(p)$ and the rotation of the boundary, measured from the side of $p$, is larger than $-\delta$ \comp{p. 151-152}{Res93}. Since $\lvert\omega\rvert_\infty$ is a finite measure, there is some $r_0\in\left(0,R\right)$ such that $J\coloneqq \partial B_{r_0}(p)$ satisfies $\lvert\omega\rvert_\infty(J)=0$.\\
Let $\tilde{\delta}>0$ with $r_0\pm\Tilde{\delta}\in\left(0,R\right)$. Then the curves $\partial B_{r_0\pm\Tilde{\delta}}(p)$ bound a topological cylinder $Z_{\Tilde{\delta}}\subset D$ such that $J$ is a non-contractible curve in its interior. From Theorem \ref{trm_topo_struc_free} follows that $Z_{\Tilde{\delta}}$ separates $X$. Because  $\lim\limits_{\tilde{\delta}\to 0}\lvert\omega\rvert_\infty\p{Z_{\Tilde{\delta}}}=\lvert\omega\rvert_\infty(J)$, we may assume that $\lvert\omega\rvert_\infty\p{Z_{\Tilde{\delta}}}<\min\cp{2\pi,\delta}$. In particular, it follows that $Z_{\Tilde{\delta}}$ is free of peak points.\\
By Proposition \ref{prop_approx_cylinder}, after passing to a subsequence, we may assume the existence of topological cylinders $Z_{\Tilde{\delta},n}\subset X_n$ such that $\uni{Z_{\Tilde{\delta},n}}{Z_{\Tilde{\delta}}}$. Moreover the homeomorphisms $\varphi_n\colon Z_{\Tilde{\delta}}\to Z_{\Tilde{\delta},n}$ corresponding to the uniform convergence can be chosen such that $\conv{d\p{\varphi_n,id_X}}{0}$. Since $\lvert\omega\rvert_{\infty}\p{Z_{\tilde{\delta}}}<\delta$, after passing to subsequence, we may assume that $\lvert\omega_{X_n}\rvert\p{Z_{\tilde{\delta},n}}\le \delta$.\\ 
We define $J_{\Tilde{\delta},n}\coloneqq\varphi_n\circ J$. Then we have $\Haus{J_{\Tilde{\delta},n}}{J}$, $\lvert\omega_{X_n}\rvert\p{J_{\Tilde{\delta},n}}\le\delta$ and we may assume that $p_n\not\in J_{\Tilde{\delta},n}$. Since $diam (J)<\varepsilon$, we further may assume that $J_{\Tilde{\delta},n}$ is separating in $X_n$. By Lemma \ref{lem_lim_int} and  Theorem \ref{trm_rot_lift}, after passing to a subsequence, we may assume that
 \begin{align*}
 \lvert\tau(J)-\lim_{n\to\infty}\tau\p{J_{\Tilde{\delta},n}}\rvert\le\lvert \omega\rvert_\infty\p{Z_{\Tilde{\delta}}}\end{align*}
where the rotation is measured from the side of $p_n$ and $p$ respectively. Hence for sufficiently small $\tilde{\delta}$, after passing to a subsequence, we have $\tau\p{J_{\tilde{\delta},n}}\ge \tau(J)-\delta$.      
\end{proof}
\end{lem}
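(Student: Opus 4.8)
The plan is to produce the curve $J$ intrinsically from the local structure of $X$ near $p$ and then transport it to the approximating spaces $X_n$ using the cylinder approximation of the previous section together with the rotation lifting result. First I would settle statement 1) by appealing to Reshetnyak's local theory of surfaces with bounded curvature \comp{p. 151-152}{Res93}: for all sufficiently small $r$ the metric ball $\bar{B}_r(p)$ is a closed topological disc contained in $D$ whose surface boundary coincides with the metric sphere $\partial B_r(p)$, and whose rotation measured from the side of $p$ exceeds $-\delta$. Since $\lvert\omega\rvert_\infty$ is a finite measure and the spheres $\partial B_r(p)$ are pairwise disjoint, only countably many of them can carry positive mass, so I may select a radius $r_0$ for which $J\coloneqq\partial B_{r_0}(p)$ satisfies $\lvert\omega\rvert_\infty(J)=0$. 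This $J$ bounds the closed disc $\bar{B}_{r_0}(p)\ni p$ and has $\tau(J)\ge-\delta$, which is exactly 1).

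Second, for the approximation I would thicken $J$ to a thin cylinder $Z_{\tilde\delta}$ bounded by the nearby spheres $\partial B_{r_0\pm\tilde\delta}(p)$, with $r_0\pm\tilde\delta$ in the admissible range, so that $J$ is non-contractible in $Z_{\tilde\delta}^0$. Because $Cut_X\cap\p{D\setminus\cp{p}}=\emptyset$, Theorem \ref{trm_topo_struc_free} guarantees that $Z_{\tilde\delta}$ separates $X$, matching the hypotheses of Proposition \ref{prop_approx_cylinder}. Letting $\tilde\delta\to0$ the mass $\lvert\omega\rvert_\infty\p{Z_{\tilde\delta}}$ decreases to $\lvert\omega\rvert_\infty(J)=0$, so I may fix $\tilde\delta$ with $\lvert\omega\rvert_\infty\p{Z_{\tilde\delta}}<\min\cp{2\pi,\delta}$, the first bound ruling out peak points in $Z_{\tilde\delta}$. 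Proposition \ref{prop_approx_cylinder} then yields, along a subsequence, cylinders $Z_{\tilde\delta,n}\subset X_n$ with $\uni{Z_{\tilde\delta,n}}{Z_{\tilde\delta}}$ and near-identity homeomorphisms $\varphi_n$ satisfying $\conv{d\p{\varphi_n,id_X}}{0}$. Setting $J_n\coloneqq\varphi_n\circ J$ immediately gives $\Haus{J_n}{J}$, and after a further subsequence $\lvert\omega_{X_n}\rvert\p{Z_{\tilde\delta,n}}\le\delta$, whence $\lvert\omega_{X_n}\rvert\p{J_n}\le\delta$; since $d\p{\varphi_n,id_X}\to0$ I may also arrange $p_n\not\in J_n$ and, using $diam(J)<\varepsilon$, that $J_n$ separates $X_n$.

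The main obstacle is the rotation estimate, since rotation is not continuous under uniform convergence alone. Here I would invoke Theorem \ref{trm_rot_lift}, whose applicability is guaranteed by the absence of peak points in $Z_{\tilde\delta}$ and by the near-identity property of $\varphi_n$; combined with the convergence of decompositions from Lemma \ref{lem_lim_int}, it bounds the discrepancy between $\tau(J)$ and $\lim_{n\to\infty}\tau\p{J_n}$ by $\lvert\omega\rvert_\infty\p{Z_{\tilde\delta}}$, with the rotations measured from the side of $p$ and $p_n$ respectively. As this mass is below $\delta$, a final passage to a subsequence yields $\tau\p{J_n}\ge\tau(J)-\delta$, which establishes statement 2) and completes the argument.
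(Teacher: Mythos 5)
Your proposal is correct and follows essentially the same route as the paper's proof: Reshetnyak's local disc structure for statement 1), the cylinder $Z_{\tilde\delta}$ bounded by $\partial B_{r_0\pm\tilde\delta}(p)$ with separation via Theorem \ref{trm_topo_struc_free}, transport by Proposition \ref{prop_approx_cylinder}, and the rotation estimate via Lemma \ref{lem_lim_int} and Theorem \ref{trm_rot_lift}. The only cosmetic difference is that you justify the choice of $r_0$ with the countability of spheres of positive mass, which the paper leaves implicit.
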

\subsubsection{Approximation of points }
As a next step, we approximate certain points of the limit space by simple closed curves such that the rotation is uniformly bounded from below: 
\begin{lem}
Let $\seq{X}{n}$ be a sequence in $\mathcal{M}\p{S,\varepsilon,C}$ and $\GH{\p{X_n,\lvert\omega_{X_n}\rvert}}{\p{X,\lvert\omega\rvert_\infty}}$. Moreover let $I\subset X$ be an open subset that is isometric to some open interval. Then, after passing to a subsequence, there are topological cylinders $Z_n\subset X_n$ and $J\subset I$ such that $\Haus{Z_n}{J}$ and $\Haus{\partial Z_n}{\partial J}$.    
\end{lem}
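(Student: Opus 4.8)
The plan is to realise the closed subarc $J$ as the Hausdorff limit of thin cylindrical regions $Z_n\subset X_n$ whose two boundary circles collapse to the two endpoints of $J$. First I would record the local structure of $I$. Since $I$ is open and isometric to an open interval, every point of $I$ has a purely one-dimensional neighborhood in $X$; by Theorem~\ref{trm_topo_struc_free} the maximal cyclic subsets of $X$ are two-dimensional, so no point of $I$ can lie in one, as near such a point the surface would be two-dimensional. Hence $I\subset Cut_X$ and every point of $I$ has $ind_X\p{p}=2$. I then fix a closed subarc $J=[a,b]\subset I$ whose endpoints are interior points of $I$. Writing $U_a^{\pm}$ for the two components of $X\setminus\cp{a}$, with $b\in U_a^{+}$, and $U_b^{\pm}$ for those of $X\setminus\cp{b}$, with $a\in U_b^{-}$, one checks $\overline{U_a^{+}}\cap\overline{U_b^{-}}=J$. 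The statement then reduces to producing separating simple closed curves $\gamma_{a,n},\gamma_{b,n}\subset X_n$ with $\Haus{\gamma_{a,n}}{\cp{a}}$ and $\Haus{\gamma_{b,n}}{\cp{b}}$, each bounding a disc.

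The construction of these curves is the core step, and this is where I expect the main difficulty. Unlike the earlier approximation results, which start from a genuine simple closed curve in the limit and lift it via Whyburn's construction \cite[p.~413]{Why35}, here the limiting separator is a single point, so that construction does not apply verbatim. Instead, for a small $\rho>0$ I would consider the two disjoint compact connected sets $\overline{U_a^{-}}\setminus B_\rho(a)$ and $\overline{U_a^{+}}\setminus B_\rho(a)$, take their $X_n$-approximations $K_n^{-},K_n^{+}$, and use Whyburn's separator technique to obtain a uniformly locally connected sequence of simple closed curves $\gamma_{a,n}\subset X_n$ lying in $X_n\setminus\p{K_n^{-}\cup K_n^{+}}$ and separating $K_n^{-}$ from $K_n^{+}$. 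Since the complement in $X$ of the two sets is contained in $\overline{B_\rho(a)}$, the curves $\gamma_{a,n}$ are trapped in the neck near $a$; applying Lemma~\ref{lem_lim_int} and letting $\rho\to 0$ along a diagonal subsequence forces $diam\p{\gamma_{a,n}}\to 0$ and $\Haus{\gamma_{a,n}}{\cp{a}}$. Because $diam\p{\gamma_{a,n}}<\varepsilon$ eventually and $X_n\in\mathcal{M}\p{S,\varepsilon}$, each $\gamma_{a,n}$ is contractible, hence bounds a disc and separates $X_n$. The same argument at $b$ yields $\gamma_{b,n}$ with $\Haus{\gamma_{b,n}}{\cp{b}}$.

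Finally I would assemble the cylinder, exactly in the spirit of Proposition~\ref{prop_approx_cylinder}. Let $A_n'$ be the side of $\gamma_{a,n}$ containing the approximations of $b$ and $B_n'$ the side of $\gamma_{b,n}$ containing those of $a$, and set $Z_n\coloneqq A_n'\cap B_n'$, the region enclosed between the two curves. Applying Lemma~\ref{lem_lim_int} at $a$ and at $b$ gives $\Haus{A_n'}{\overline{U_a^{+}}}$ and $\Haus{B_n'}{\overline{U_b^{-}}}$, and, the sequences being uniformly locally connected, $\Haus{Z_n}{\overline{U_a^{+}}\cap\overline{U_b^{-}}=J}$, while $\partial Z_n=\gamma_{a,n}\cup\gamma_{b,n}$ gives $\Haus{\partial Z_n}{\cp{a,b}=\partial J}$. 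In particular $Z_n$ is connected, as it converges to the connected arc $J$.

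It remains to recognise $Z_n$ as a topological cylinder. It is a compact subsurface whose boundary consists of exactly the two circles $\gamma_{a,n}$ and $\gamma_{b,n}$. Capping these circles off with small spherical caps of vanishing diameter produces closed surfaces $\hat Z_n$; the capping preserves the uniform contractibility, so $\hat Z_n\in\mathcal{M}\p{S_g,\varepsilon'}$ for some closed surface $S_g$ whose genus is bounded by that of $S$, and, after passing to a subsequence, we may assume the genus is a constant $g$ and $\Haus{\hat Z_n}{J}$. Since the arc $J$ contains no simple closed curve, it has no maximal cyclic subset; Theorem~\ref{trm_topo_struc_free} then rules out the case $S_g\not\cong\mathbb{S}^2$, so $g=0$. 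This is the same capping-off reasoning already used in Lemma~\ref{lem_disc_approx}. Consequently $\hat Z_n\cong\mathbb{S}^2$ and $Z_n$ is a sphere with two open discs removed, i.e. a topological cylinder, completing the proof.
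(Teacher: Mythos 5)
Your overall strategy (squeeze a neck region between two small separating circles collapsing to the endpoints of a subarc) is plausible, but its two load-bearing steps are asserted rather than proved, and the first of them is a genuine gap. There is no ``Whyburn separator technique'' that produces a \emph{simple closed curve} separating two disjoint continua $K_n^-,K_n^+$ in a closed surface: the construction you cite from \cite[p. 413]{Why35} lifts a simple closed curve already present in the limit, and, as you yourself note, here the limiting separator is the single point $a$. The abstract separation claim is simply false in positive genus --- two disjoint parallel essential circles on a torus are separated by no simple closed curve whatsoever --- so for $S\not\cong\mathbb{S}^2$ the existence of $\gamma_{a,n}$ requires ruling out that the ``neck'' over $B_\rho(a)$ consists of several channels or carries small handles. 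Ruling this out is the actual content of the lemma, and it needs the hypotheses you never invoke at this stage: you use the $\varepsilon$-contractibility only \emph{after} the curves exist (to cap them off), and you never use the measured convergence $\GH{\p{X_n,\lvert\omega_{X_n}\rvert}}{\p{X,\lvert\omega\rvert_\infty}}$ at all. The paper's proof is powered precisely by this curvature hypothesis: it picks the midpoint $p$ of $I$ with $\lvert\omega\rvert_\infty(p)=0$, approximates $X_n$ by Riemannian manifolds, and uses Shioya's non-exceptional radii \cite[p. 1772]{Shi99} to see that $\bar{B}_r\p{p_n}$ is a compact surface with boundary $\partial B_r\p{p_n}$; after filling in the complementary discs inside $B_R\p{p_n}$ it gets $\chi\p{Z_n}\ge 0$ from $\lvert\omega_{X_n}\rvert\p{B_R\p{p_n}}\le 1$, while Lemma \ref{lem_lim_int} and the fact that $X\setminus\bar{B}_R(p)$ has two components force at least two boundary circles; $\chi\ge 0$ together with two boundary components is what yields the cylinder. (A smaller flaw in the same step: Hausdorff closeness of $K_n^{\pm}$ to the trimmed sides does not confine $X_n\setminus\p{K_n^-\cup K_n^+}$ to a neighborhood of $B_\rho(a)$, since points of $X_n$ outside $K_n^{\pm}$ may converge into the limit sets; so even granted a separating circle, its claimed trapping near $a$ needs a different choice of $K_n^{\pm}$.)

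Two further steps are not valid as written. First, ``$Z_n$ is connected, as it converges to the connected arc $J$'' is a non sequitur: disconnected sets Hausdorff-converge to connected sets (two points at mutual distance $\frac{1}{n}$ converge to one point); you would need uniform local connectedness arguments, or to pass to an appropriate component. Second, ``the capping preserves the uniform contractibility'' is exactly the kind of statement that must be argued: a loop in $Z_n$ of diameter less than $\varepsilon$ contracts in $X_n$, but that contraction may use the complementary pieces you have replaced by discs, so contractibility in $\hat{Z}_n$ does not follow formally. It can be repaired --- e.g.\ a small meridian of a handle of $Z_n$ is non-separating in $\hat{Z}_n$, hence (pushing a dual curve through the connected complementary pieces) non-separating and thus essential already in $X_n$, contradicting $X_n\in\mathcal{M}\p{S,\varepsilon}$ --- but this argument, like the channel-counting above, is precisely where the genus-$0$ conclusion is earned, and your outline front-loads all of the difficulty into these two asserted steps. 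If both were supplied, you would indeed have a genuinely different, curvature-free proof; as it stands, the paper's curvature-based route via Euler characteristic estimates for small metric balls is what fills the holes.
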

\begin{proof}
We may assume that the midpoint $p$ of the interval satisfies $\lvert\omega\rvert_\infty(p)=0$. Moreover we choose $0<r\le \frac{\varepsilon}{2}$ such that $R\coloneqq 100r<2^{-1}length(I)$.\\
Let $p_n\in X_n$ such that $\conv{p_n}{p}$. By \cite[p. 77, 144]{Res93} and \cite[p. 1772]{Shi99} we may assume that $X_n$ is a Riemannian 2-manifold, $r$ and $R$ are non-exceptional with respect to $p_n$ in the sense of the latter reference and $\bar{B}_r\p{p_n}$ is a compact surface whose surface boundary is given by $\partial B_r\p{p_n}$. We write $Z_n$ for the union of $\bar{B}_r\p{p_n}$ with all closed topological discs in $B_R\p{p_n}\setminus B_r\p{p_n}$ that are bounded by some connected component of $\partial B_r\p{p_n}$.\\ Choosing $r$ small enough, after passing to a subsequence, we may assume the inequality $\vert\omega_{X_n}\rvert\p{B_R\p{p_n}}\le 1$. Then $Z_n$ is a compact surface whose boundary is given by connected components of $\partial B_r\p{p_n}$ and, choosing $r$ even smaller, the last reference implies that $\chi\p{Z_n}\ge 0$. Since $X\setminus \bar{B}_R(p)$ consists of two connected components, Lemma \ref{lem_lim_int} implies that, after passing to a subsequence, $Z_n$ has at least two boundary components. As a consequence, $Z_n$ is a topological cylinder.\\
Finally, after passing to a subsequence, there is some $J\subset I$ such that $\Haus{Z_n}{J}$ and $\Haus{\partial Z_n}{\partial J}$. \end{proof}
\noindent
We define the \emph{length} of a topological cylinder $Z$ as the distance between its boundary components and denote it by $length(Z)$.
\begin{lem}
Let $X_n$ be closed surfaces with bounded curvature and $Z_n\subset X_n$ be topological cylinders. Moreover we assume that $\seq{Z}{n}$ Gromov-Hausdorff converges to some compact interval $\left[0,l\right]\subset\mathbb{R}$ and $\Haus{\partial Z_n}{\cp{0,l}}$. Then for every $\delta>0$, after passing to a subsequence, there are topological cylinders $Z'_n\subset Z_n$ satisfying the following properties:
\begin{itemize}
\item[1)] The boundary components of $Z_n'$ are piecewise geodesic.
\item[2)] The boundary components of $Z'_n$ are non-contractible in $Z_n$. 
\item[3)]  We have $length\p{Z'_n}\ge \frac{l}{2}-2\delta$ and $d\p{\partial Z'_n,\partial Z_n}\ge\frac{l}{4}-2\delta$. 
\item[4)] The length of the boundary components of $Z'_n$ is at most $\delta$. 
\end{itemize}
\end{lem}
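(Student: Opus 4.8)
The plan is to slice $Z_n$ by the level sets of the distance to one boundary component and to extract two short, essential cross-sections at heights near $\tfrac{l}{4}$ and $\tfrac{3l}{4}$. Write $C_n^0,C_n^l$ for the boundary components of $Z_n$, labelled so that $\Haus{C_n^0}{\cp{0}}$ and $\Haus{C_n^l}{\cp{l}}$, and set $g_n\coloneqq d_{Z_n}\p{\,\cdot\,,C_n^0}$. This function is $1$-Lipschitz with $g_n\equiv 0$ on $C_n^0$. Since $Z_n$ converges to the interval and $C_n^0$ to its endpoint $0$, the functions $g_n$ converge to the coordinate function on $\left[0,l\right]$; in particular $\min_{C_n^l}g_n\to l$ and the fibres $g_n^{-1}(t)$ have diameter tending to $0$, uniformly in $t$. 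Moreover, for $0<t<\min_{C_n^l}g_n$ the fibre separates $C_n^0$ from $C_n^l$ in the cylinder and therefore contains a simple closed curve that is non-contractible in $Z_n$, of length at most $\mathcal{H}^1\p{g_n^{-1}(t)}$.

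First I would produce short essential curves near the two prescribed heights. Fix a small $\delta'\le\delta$ and consider the bands $W_n^-\coloneqq g_n^{-1}\p{\left[\tfrac{l}{4}-\delta',\tfrac{l}{4}+\delta'\right]}$ and $W_n^+\coloneqq g_n^{-1}\p{\left[\tfrac{3l}{4}-\delta',\tfrac{3l}{4}+\delta'\right]}$. Covering $\left[0,l\right]$ by $\sim l/\eta$ subintervals of length $\eta$, each fibre-band of height $\eta$ has diameter $O(\eta)$ and hence lies in a ball of radius $O(\eta)$; a uniform local quadratic area bound for the surfaces in question (cf. Proposition \ref{prop_Riem_area} together with Theorem \ref{trm_uni_surf_curv}) then forces $\mathcal{H}^2\p{W_n^\pm}\to 0$. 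The coarea formula \comp{p. 603}{LW18} now yields a height $a_n$ in the first window with $\mathcal{H}^1\p{g_n^{-1}(a_n)}\le \mathcal{H}^2\p{W_n^-}/(2\delta')\to 0$, and likewise $b_n$ in the second. I would then take $\gamma_n^-$ to be a shortest curve in $W_n^-$ that is non-contractible there, and $\gamma_n^+$ likewise in $W_n^+$. These are simple closed geodesic polygons (length-minimizers in a free homotopy class on a surface with bounded curvature are geodesics, cf. \cite{AZ67}), non-contractible in $Z_n$ because the core inclusion of a sub-cylinder is $\pi_1$-essential, and of length at most $\mathcal{H}^1\p{g_n^{-1}(a_n)}$ resp. $\mathcal{H}^1\p{g_n^{-1}(b_n)}$, hence $\le\delta$ for $n$ large. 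This establishes properties 1, 2 and 4.

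Finally I would let $Z'_n\subset Z_n$ be the sub-cylinder cobounded by $\gamma_n^-$ and $\gamma_n^+$, which exists since two essential simple closed curves in a cylinder are isotopic to the core. The metric estimates then follow from the $1$-Lipschitz property of $g_n$ together with $diam\,\gamma_n^\pm\to 0$: the values of $g_n$ on $\gamma_n^-$ lie within $o(1)$ of $\left[\tfrac{l}{4}-\delta',\tfrac{l}{4}+\delta'\right]$ and those on $\gamma_n^+$ within $o(1)$ of $\left[\tfrac{3l}{4}-\delta',\tfrac{3l}{4}+\delta'\right]$, so that $d\p{\gamma_n^-,\gamma_n^+}\ge \tfrac{l}{2}-2\delta'-o(1)$, $d\p{\gamma_n^-,C_n^0}\ge\tfrac{l}{4}-\delta'-o(1)$ and $d\p{\gamma_n^+,C_n^l}\ge \min_{C_n^l}g_n-\tfrac{3l}{4}-\delta'-o(1)\to\tfrac{l}{4}-\delta'$. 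Choosing $\delta'\le\delta$ and $n$ large yields property 3.

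The main obstacle is the \emph{uniform} shortness of the cross-sections, that is, controlling $\mathcal{H}^1\p{g_n^{-1}(a_n)}$ rather than merely the diameter of the fibres: a curve of small diameter may still be long, so the argument genuinely needs the quadratic area bound to guarantee $\mathcal{H}^2\p{W_n^\pm}\to 0$, after which the coarea formula does the work. The remaining points—that the fibres separate and carry an essential curve, that a shortest essential curve is a simple geodesic polygon, and the distance bookkeeping—are routine once this is in place.
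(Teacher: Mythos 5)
You have a genuine gap, and it sits exactly at the step you yourself identify as the crux. Your argument needs a \emph{uniform} quadratic area bound to force $\mathcal{H}^2\p{Z_n}\to 0$, but the lemma assumes no uniform bound on the curvature measures: the $X_n$ are merely closed surfaces with bounded curvature, with no constant $C$ controlling $\lvert\omega_{X_n}\rvert$. Proposition \ref{prop_Riem_area} gives the area constant $\frac{2\pi+C_n}{2}$ in terms of a curvature bound $C_n$ for the $n$-th surface, and with $C_n\to\infty$ your covering estimate $\mathcal{H}^2\p{Z_n}\lesssim A\,l\,\eta$ says nothing. The conclusion genuinely fails without such a bound: one can decorate a thin flat cylinder with many tiny high-curvature wrinkles so that $Z_n$ still Gromov--Hausdorff converges to $\left[0,l\right]$ while $\mathcal{H}^2\p{Z_n}$ stays bounded away from zero; then no level window need contain a fibre of small $\mathcal{H}^1$, and the coarea step collapses. (In the paper's application the ambient spaces do lie in $\mathcal{M}\p{S,\varepsilon,C}$, so your proof could be salvaged there, but it does not prove the lemma as stated.) The paper's proof is deliberately area-free: it takes a maximal geodesic $\gamma$ joining the boundary components, uses the collapsing only to get a \emph{small-diameter} separating set $S=\partial B_{\frac{l}{4}}(p)$, extracts from it a separating simple closed curve $J_i$ of diameter at most $\frac{\delta}{3}$ \comp{p. 623}{LW18}, and only then manufactures shortness.

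That last step also repairs a secondary defect in your construction. A curve of small diameter may be long, and your fix --- a shortest essential loop in the band $W_n^\pm$ --- need not be piecewise geodesic: the minimizer can run along the edge of the band, i.e.\ along level sets of $g_n$, which are not geodesic arcs, so property 1 is not established; minimizing instead over all of $Z_n$ restores geodesicity but loses localization, since the global minimizer in the free homotopy class may migrate toward $\partial Z_n$ and destroy the distance estimates in property 3. The paper threads this needle with a polygon trick: subdivide $J_i$ into arcs of length at most $\tilde{\delta}\le\frac{\delta}{3}$, span the subdivision by geodesics $\alpha_j$ from the basepoint $y=J_i\cap\gamma$ and a short geodesic $\xi$ between consecutive subdivision points, observe that the concatenation $\alpha_j\ast\xi\ast\alpha_{j+1}$ is homotopic to $J_i$ (hence essential), and extract from it a simple closed piecewise geodesic curve $\beta_i$ of length at most $3\cdot\frac{\delta}{3}=\delta$ which, having all its vertices within $diam\p{J_i}$ of the fibre, stays where the distance bookkeeping needs it. This converts diameter control directly into length control, with no coarea formula and no curvature or area hypothesis, which is why the paper's lemma can be stated without a uniform bound. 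Your level-set framing and distance estimates for property 3 are fine; it is properties 1 and 4 whose derivation needs to be replaced by an argument of this kind.
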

\begin{proof}
Let $\gamma\subset Z_n$ be a geodesic with $length(\gamma)=length\p{Z_n}$ that connects the boundary components of $Z_n$. We denote the midpoint of $\gamma$ by $p$ and its endpoints by $x_1$ and $x_2$.\\
Due to the convergence of the cylinders, after passing to a subsequence, we obtain that $length(\gamma)\ge l-\delta$, $S\coloneqq\partial B_{\frac{l}{4}}\p{p}\subset Z_n^0$ and that $S$ separates $x_1$ and $x_2$ in $Z_n$. Moreover we may assume the existence of some simple closed curve $J_1\subset S$ with $diam\p{J_1}\le\frac{\delta}{3}$ that separates $x_1$ and $x_2$ in $Z_n$ \comp{p. 623}{LW18}. In particular, $J_1$ is non-contractible in $Z_n$ and we may assume that $J_1$ separates $x_1$ and $p$ in $Z_n$.\\ Using a similar argument, we also may assume that there is a second simple closed curve $J_2\subset S$ with $J_1\cap J_2=\emptyset$ and $diam\p{J_2}\le\frac{\delta}{3}$ that is non-contractible in $Z_n$ and separates $x_2$ and $p$ in $Z_n$.\\
There is some $\varepsilon_n>0$ such that every loop in $Z_n$ of diameter at most $\varepsilon_n$ is contractible in $Z_n$. Let $y$ be the intersection point of $J_i$ with $\gamma$ and $0<\Tilde{\delta}\le\min\cp{\frac{\varepsilon_n}{2},\frac{\delta}{3}}$. We may assume that $length\p{J_i}>\tilde
{\delta}$ and choose a subdivision $t_0\coloneqq 0<t_1<\ldots<t_k\coloneqq 1$ of the domain of $J_i$ such that $length\p{J_i|_{\left[t_j,t_{j+1}\right]}}\le\tilde{\delta}$. After passing to a subsequence, there is some geodesic $\alpha_j\subset Z$ connecting $y$ with $J_i\p{t_j}$ and we may assume that $\alpha_j\cap\gamma$ and $\alpha_j\cap\alpha_{j+1}$ are connected.\\
By construction there is some $t_0<t_j<t_k$ such that $\alpha_j$ is homotopic to $J_i|_{\left[0,t_j\right]}$ and $\alpha_{j+1}$ is homotopic to $J_i|_{\left[t_{j+1},1\right]}$. After passing to a subsequence, there is some geodesic $\xi\subset Z$ connecting $J_i\p{t_j}$ with $J_i\p{t_{j+1}}$ such that $\xi\cap \alpha_j$ and $\xi\cap \alpha_{j+1}$ are connected. In particular, $\xi$ is homotopic to $J_i|_{\left[t_j,t_{j+1}\right]}$. We derive that the concatenation $\alpha_j\ast \xi\ast \alpha_{j+1}$ is homotopic to $J_i$.\\ Up to parametrization there is exactly one simple closed curve $\beta_i$ contained in this concatenation. The curve $\beta_i$ is non-contractible in $Z_n$ and piecewise geodesic. After passing to a subsequence, we have $\beta_1\cap\beta_2=\emptyset$ and the curves bound a topological cylinder $Z_n'\subset Z_n$. In particular, we get $length\p{Z'_n}\ge \frac{l}{2}-2\delta$, $d\p{\partial Z_n',\partial Z_n}\ge\frac{l}{4}-2\delta$ and the length of the boundary components of $Z_n'$ is at most $\delta$.
\end{proof}
\noindent
Next we show a general result about surfaces with bounded curvature. The exact estimates in the statement do not play an essential role in the further discussion. It is only important that $\tau(J)$ can be chosen close to zero if $\frac{c}{a}$ and $\lvert\omega_X\rvert(Z)$ are close to zero.    
\begin{lem}
Let $X$ be a closed surface with bounded curvature and $Z'\subset Z\subset X$ be topological cylinders with piecewise geodesic boundary components. Moreover we assume that the boundary components of $Z'$ are non-contractible in $Z$ and that $\delta\coloneqq\lvert\omega_X\rvert(Z)<\frac{\pi}{5}$. We introduce the following notation: \begin{itemize}
\item $a\coloneqq length\p{Z'}$
\item $b\coloneqq d\p{\partial Z',\partial Z}$
\item $c\coloneqq\text{The length of a longest boundary component of $Z'$.}$ 
\item $\alpha\coloneqq\frac{c}{a}+7\delta$ 
\end{itemize} 
If $c<b$ and $\arccos(\alpha)\ge\frac{\pi}{2}-\delta$, then there is some simple closed curve $J\subset Z$ such that $\lvert\tau(J)\rvert\le 4\delta$. The estimate does not depend on the direction from which the rotation is measured. 
\end{lem}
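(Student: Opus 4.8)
The plan is to produce $J$ as a nearly geodesic non-contractible loop that stays in the interior of $Z$, and then to read off the bound on $\tau(J)$ from the Gauss--Bonnet theorem. The guiding principle is the following: if $J\subset Z^0$ is a non-contractible simple closed curve that is a local geodesic away from finitely many corners, then cutting $Z$ along $J$ yields two topological cylinders, so that $\chi=0$ in the Gauss--Bonnet theorem and the rotation of $J$, measured from either side, equals the negative of the enclosed curvature minus the turning contributed by the corners. Since the total curvature available inside $Z$ is only $\delta=\lvert\omega_X\rvert\p{Z}<\tfrac{\pi}{5}$, everything reduces to constructing $J$ so that its corners contribute at most a controlled multiple of $\delta$ to the turning.

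First I would fix a shortest geodesic $\gamma$ across $Z'$ realizing $a=length\p{Z'}$, with endpoints $x_1$ and $x_2$ on the two boundary circles $J_1,J_2$ of $Z'$, and use $\gamma$ as an axis. Wrapping once around $Z'$ and closing up with short geodesic arcs produces a non-contractible piecewise geodesic loop. Here both hypotheses enter. The buffer inequality $c<b=d\p{\partial Z',\partial Z}$ guarantees that such a loop, whose relevant arcs have length comparable to $c$, never reaches $\partial Z$, so that all rotations are measured in the region where only the curvature $\delta$ is available. The angle hypothesis $\arccos(\alpha)\ge\tfrac{\pi}{2}-\delta$ with $\alpha=\tfrac{c}{a}+7\delta$ is exactly the statement that the closing arcs meet the wrapping direction within $\delta$ of a right angle: I would extract this from Alexandrov angle comparison, noting that in a region of total curvature at most $\delta$ a geodesic triangle with a long side of length $\approx a$ and a short side of length $\le c$ has base angles within a controlled multiple of $\delta$ of the Euclidean value $\arccos\p{\tfrac{c}{a}}$, the additive $7\delta$ absorbing the curvature defect.

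It then remains to convert the angle estimate into a bound on $\tau(J)$. On the geodesic arcs the sector angles are at least $\pi$, so these parts contribute nonpositively to the rotation, while the finitely many corners contribute their exterior angles, each at most $2\delta$ by the previous step. Applying the Gauss--Bonnet theorem to the sub-cylinder bounded by $J$ and $J_1$ pins the rotation of $J$ from that side between the enclosed curvature, of absolute value at most $\delta$, and the corner contributions; using in addition that the left and right rotations of $J$ sum to the curvature carried by $J$ itself, which is at most $\delta$ in absolute value, these terms combine to give $\lvert\tau(J)\rvert\le 4\delta$ regardless of the side from which the rotation is measured.

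The main obstacle is the quantitative angle comparison in the middle step: unlike in the smooth nonnegatively curved setting I cannot invoke a clean Toponogov inequality, and must instead bound the defect of geodesic triangles purely in terms of the enclosed total curvature $\delta$, which is the source of both the constant $7$ in $\alpha$ and the constraint $\delta<\tfrac{\pi}{5}$. A secondary difficulty is arranging the wrapped-and-closed curve to be simple and non-contractible at the same time; I expect this from choosing the closing arcs short, of length $\le c$, and disjoint, again invoking $c<b$ to keep them interior, but it requires care to rule out self-intersections created by the curvature.
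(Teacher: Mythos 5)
Your overall picture --- a loop going once around the cylinder, meeting a lengthwise geodesic axis almost perpendicularly, with small rotation because only $\delta$ of curvature is available --- is indeed the picture behind the paper's proof, but the step you yourself flag as ``the main obstacle'' is precisely the step the whole lemma lives on, and your substitute for it does not work. There is no Alexandrov-type comparison giving base angles within $O(\delta)$ of $\arccos\left(\tfrac{c}{a}\right)$ for a geodesic triangle with ``a long side $\approx a$ and a short side $\le c$'': the Euclidean base angle depends on all three side lengths, and a bound on $\lvert\omega_X\rvert(Z)$ alone controls angle excesses, not individual angles against a model triangle whose third side you never specify. The paper's mechanism is different in kind. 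It cuts $Z^{int}$ along a geodesic $\gamma$ realizing $length(Z)$ (connecting the boundary components of $Z$, not of $Z'$), obtaining a disc $Q$ with two boundary copies $\gamma_1,\gamma_2$ of $\gamma$, and studies $f(t)\coloneqq d\left(\gamma_1,\gamma_2(t)\right)$. The first variation formula together with Gauss--Bonnet applied to the quadrilaterals bounded by two distance-realizing geodesics and arcs of $\gamma_1,\gamma_2$ shows that $\partial^+ f$ is almost constant, varying by at most $7\delta$ on the relevant interval $I_0$ (the realizers avoid $\pi^{-1}(\partial Z)$ thanks to $c<b$). Since $f\le c$ at suitable parameters while the interval has length at least $a$, a one-sided mean value theorem forces some $t_0$ with $\lvert\partial^+ f(t_0)\rvert\le \tfrac{c}{a}+7\delta=\alpha$, and first variation then produces a realizer $\xi$ meeting $\gamma_2$ with both sector angles in $\left[\arccos(\alpha),\pi-\arccos(\alpha)\right]$. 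This averaging argument is exactly where $\tfrac{c}{a}$, the constant $7$, and the hypothesis $\arccos(\alpha)\ge\tfrac{\pi}{2}-\delta$ enter; nothing in your proposal replaces it.

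The endgame is also broken, in two ways. First, applying Gauss--Bonnet to the sub-cylinder bounded by $J$ and a boundary component $J_1$ of $Z'$ drags in $\tau(J_1)$, which no hypothesis controls: $\partial Z'$ is merely assumed piecewise geodesic and may have arbitrarily large rotation, so this comparison cannot ``pin'' $\tau(J)$. Second, your accounting is internally inconsistent: if the closing arcs meet the wrapping direction within $\delta$ of a right angle, then the two corners of $J$ each turn by roughly $\tfrac{\pi}{2}$, not by ``at most $2\delta$''. What saves the paper's curve $J$ --- the projection $\pi(\xi)$ closed up by a short subarc of the axis $\gamma$ --- is that the two nearly-right-angle turns have \emph{opposite signs} and cancel, and the quality of that cancellation is read off directly from the sector-angle estimates: at the far endpoint both sector angles between $\xi$ and $\gamma_1$ lie in $\left[\tfrac{\pi}{2},\tfrac{\pi}{2}+\delta\right]$ because $\xi$ ends at a closest point, while at $\gamma_2(t_0)$ they lie in $\left[\arccos(\alpha),\pi-\arccos(\alpha)\right]$, giving $\lvert\tau(J)\rvert\le\tfrac{\pi}{2}-\arccos(\alpha)+3\delta\le 4\delta$ from either side with no Gauss--Bonnet against $J_1$ at all. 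As written, your construction yields a loop whose rotation you cannot bound below roughly $\pi$, so both the perpendicularity step and the final bookkeeping need to be replaced by the cut-and-mean-value argument sketched above.
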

\begin{proof}
First we choose a geodesic $\gamma\subset Z$ with $length(\gamma)=length(Z)$ that connects the boundary components of $Z$. Then we cut $Z^{int}$ along $\gamma$. The resulting space $Q$ is a surface with bounded curvature in the sense of \cite[p. 141]{Res93} that is homeomorphic to the closed disc. Let $\pi\colon Q\to Z^{int}$ be the projection map corresponding to the cutting and $\gamma_1$ and $\gamma_2$ be the two connected components of $\pi^{-1}(\gamma)$. We note that these curves are geodesics in $Q$.\\
Let $x_1,x_2\in \gamma_2$ with $x_1\neq x_2$. Moreover let $\xi_i\subset Q$ be a geodesic from $x_i$ to some closest point of $\gamma_1$ such that $\xi_i$ does not intersect $\pi^{-1}\p{\partial Z}$. We further assume that the subsets $\xi_i\cap \gamma_2$ and $\xi_1\cap \xi_2$ are connected. For the sake of simplicity, we only consider the case that $\xi_i\cap \gamma_2$ is a singleton and $\xi_1\cap \xi_2$ is empty. All remaining cases can be obtained similarly or follow from this case.\\ The first variation formula \comp{pp. 1034, 1036}{Lyt05} shows that the two sector angles between $\gamma_1$ and $\xi_i$ are at least $\frac{\pi}{2}$ and it follows that they lie in $\left[\frac{\pi}{2},\frac{\pi}{2}+\delta\right]$. We note that the sum of the two sector angles between $\xi_i$ and $\gamma_2$ lies in $\left[\pi,\pi+\delta\right]$.\\
There is a unique closed topological disc $R\subset Q$ that is bounded by a subset of $\gamma_1\cup \xi_1\cup \gamma_2\cup\xi_2$. The Gauss-Bonnet theorem implies that the sum of the sector angle between $\xi_1$ and $\gamma_2$ and the sector angle between $\xi_2$ and $\gamma_2$, both measured from the side of $R$, lies in $\left[\pi-3\delta,\pi+\delta\right]$. We conclude that the sector angle between $\xi_1$ and $\gamma_2$, measured from the side of $R$, differs from the sector angle between $\xi_2$ and $\gamma_2$, measured from outside of $R$, by at most $5\delta$.\\
We may assume that $\gamma_2$ is parametrized by arc length and denote its domain by $I$. Furthermore let $I_0\subset I$ be the maximal subinterval such that
$\pi^{-1}\p{\gamma\cap Z'}\subset \gamma_2\p{I_0}$ and the following property is satisfied: For every $t\in I_0$ and every geodesic connecting $\gamma_2(t)$ with $\gamma_1$ of length $d_{\gamma_1}\p{\gamma_2(t)}\coloneqq d\p{\gamma_1,\gamma_2(t)}$ the intersection with $\pi^{-1}\p{\partial Z}$ is empty.\\
Using the aforementioned estimate for the difference of the sector angles, the first variation formula yields
\begin{align*}
\lvert \partial^+d_{\gamma_1}\p{\gamma_2(s)}-\partial^+d_{\gamma_1}\p{\gamma_2(t)}\rvert\le 7\delta   
\end{align*}
for all $s,t\in I_0$.\\
For the sake of contradiction, we assume that $\lvert \partial^+d_{\gamma_1}\p{\gamma_2(t)}\rvert > \alpha$ for all $t\in I_0$. Then we may assume that $\partial^+d_{\gamma_1}\p{\gamma_2(t)}>\frac{c}{a}$. By the mean value theorem for one-sided differentiable functions in \cite[p. 473]{MV86} we have
\begin{align*}
d_{\gamma_1}\p{\gamma_2(s)}>\frac{c}{a}\p{s-t}    
\end{align*}
for all $s\ge t$. There is a choice of the parameters such that $d_{\gamma_1}(s)\le c$ and $s-t\ge a$. This yields $c>2c$. A contradiction.\\
It follows the existence of some $t_0\in I_0$ with $\lvert \partial^+d_{\gamma_1}\p{\gamma_2\p{t_0}}\rvert \le\alpha$. Due to the first variation formula there is a geodesic $\xi\subset Q$ connecting $\gamma_2\p{t_0}$ with $\gamma_1$ of length $d_{\gamma_1}\p{\gamma_2\p{t_0}}$ such that $\xi\cap \gamma_2$ is a singleton and the sector angles between $\xi$ and $\gamma_2$ lie in  $\left[\arccos(\alpha),\pi-\arccos(\alpha)\right]$.\\
We connect the endpoints of $\pi\p{\xi}$ by some subarc of $\gamma$ and denote the constructed simple closed curve in $Z$ by $J$. Removing the endpoints of $\pi(\xi)$ from $J$, yields two local geodesics. By the estimates for the sector angles we finally derive
\begin{align*}
\lvert\tau(J)\rvert\le \frac{\pi}{2}-\arccos(\alpha)+3\delta\le 4\delta. 
\end{align*}
In particular, the estimate does not depend on the direction from which the rotation is measured.
\end{proof}
\noindent
Since $\lvert\omega\rvert_\infty$ is a finite measure, every open subset contains some point with measure zero. Combining the last three results, we therefore obtain the following corollary: 
\begin{cor}\label{cor_approx_point}
Let $\seq{X}{n}$ be a sequence in $\mathcal{M}\p{S,\varepsilon,C}$ and $\GH{X_n}{X}$. Moreover let $I\subset X$ be an open subset that is isometric to some open interval. Then for every $\delta>0$, after passing to a subsequence, there is a point $p\in I$ and  simple closed curves $J_n\subset X_n$ such that $\Haus{J_n}{\cp{p}}$ and $\lvert\tau\p{J_n}\rvert\le \delta$. The estimate does not depend on the direction from which the rotation is measured.   
\end{cor}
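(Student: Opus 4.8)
The plan is to realise $p$ as the limit of a collapsing family of cross-sections of a thin neck lying over $I$, obtained by feeding the three preceding lemmas into one another. First I would upgrade the hypothesis: since every $X_n\in\mathcal{M}\p{S,\varepsilon,C}$ is a surface with bounded curvature with $\lvert\omega_{X_n}\rvert\p{X_n}\le C$, the Banach--Alaoglu compactness result from Section~\ref{sec_pre} lets me pass to a subsequence along which $\GH{\p{X_n,\lvert\omega_{X_n}\rvert}}{\p{X,\lvert\omega\rvert_\infty}}$ for some finite limit measure $\lvert\omega\rvert_\infty$ over $X$. Because $\lvert\omega\rvert_\infty$ is finite, its mass on shrinking intervals is small, so I can fix a relatively compact open subinterval $I'\Subset I$ centred at a point of vanishing $\lvert\omega\rvert_\infty$-measure with $\lvert\omega\rvert_\infty\p{\overline{I'}}<\min\cp{\tfrac{\pi}{5},\tfrac{\delta}{4}}$. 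The target is then a single application of the chain at this one scale.

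Next I would run the chain. Applying the first of the three preceding lemmas to $I'$ produces, after a subsequence, topological cylinders $Z_n\subset X_n$ with $\Haus{Z_n}{J}$ and $\Haus{\partial Z_n}{\partial J}$ for some compact subinterval $J\subset I'$ of positive length $l$; thus $\seq{Z}{n}$ Gromov--Hausdorff converges to $\left[0,l\right]$ with $\Haus{\partial Z_n}{\cp{0,l}}$. Since the weak limit of measures is upper semicontinuous on closed sets and $\Haus{Z_n}{J}$, after a further subsequence I may assume $\lvert\omega_{X_n}\rvert\p{Z_n}\le\lvert\omega\rvert_\infty\p{\overline{I'}}<\min\cp{\tfrac{\pi}{5},\tfrac{\delta}{4}}$. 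Now the second preceding lemma, applied to $\seq{Z}{n}$ with a parameter $\delta'$ chosen small relative to $l$, yields subcylinders $Z'_n\subset Z_n$ whose boundary components are piecewise geodesic and non-contractible in $Z_n$ and which satisfy $length\p{Z'_n}\ge\tfrac{l}{2}-2\delta'$, $d\p{\partial Z'_n,\partial Z_n}\ge\tfrac{l}{4}-2\delta'$ and boundary lengths at most $\delta'$. For $\delta'$ small this forces $c<b$ and drives the ratio $c/a$ to be small, in the notation of the third lemma.

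Finally I would invoke the third preceding lemma with $X=X_n$, $Z=Z_n$ and $Z'=Z'_n$, so that its curvature parameter is $\lvert\omega_{X_n}\rvert\p{Z_n}<\tfrac{\pi}{5}$. As the remark preceding that lemma records, its conclusion holds once $c/a$ and $\lvert\omega_{X_n}\rvert\p{Z_n}$ are small enough; both are, so the lemma produces a simple closed curve $J_n\subset Z_n$ with $\lvert\tau\p{J_n}\rvert\le 4\,\lvert\omega_{X_n}\rvert\p{Z_n}<\delta$, the bound being independent of the side from which the rotation is measured. It remains to see $\Haus{J_n}{\cp{p}}$: as its construction shows, $J_n$ is a cross-cut of the neck, namely the concatenation of a geodesic whose length equals the circumference of $Z_n$ at the chosen cross-section with a short arc of the connecting geodesic. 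Since $Z_n$ collapses onto the one-dimensional interval $J$, this circumference tends to $0$, whence $diam\p{J_n}\to 0$; passing to one last subsequence so that the cross-sections localise, the curves converge in the Hausdorff sense to a single point $p\in\overline{J}\subset\overline{I'}\subset I$.

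The main obstacle will be the bookkeeping that keeps the three lemmas compatible: extracting the smallness $\lvert\omega_{X_n}\rvert\p{Z_n}<\delta/4$ from the measured convergence through upper semicontinuity on the collapsing region, and selecting $\delta'$ only after $l$ is known so that $c/a$ and the curvature are simultaneously small enough for the third lemma to fire. The one genuinely geometric point is the collapse estimate $diam\p{J_n}\to 0$, since this is what upgrades a family of cross-sections of a neck of fixed length $l$ into a family converging to a single point; everything else is subsequence extraction and the verified numerical thresholds.
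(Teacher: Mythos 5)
Your chain of lemmas is exactly the combination the paper intends (the proof there is literally ``combining the last three results''), and your bookkeeping for the rotation bound is right: choose the limit measure via Banach--Alaoglu, force $\lvert\omega_{X_n}\rvert\p{Z_n}<\min\cp{\tfrac{\pi}{5},\tfrac{\delta}{4}}$ by upper semicontinuity, make $c/a$ small via the second lemma, and invoke the third lemma in the spirit of the remark preceding it. The genuine gap is the very last step, $\Haus{J_n}{\cp{p}}$. You run the construction at a \emph{single fixed scale} (one interval $I'$ of fixed length $l$) and assert that $diam\p{J_n}\to 0$ because ``the circumference of $Z_n$ at the chosen cross-section tends to $0$''. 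But the geodesic $\xi$ in the third lemma has length $d_{\gamma_1}\p{\gamma_2\p{t_0}}$, a width measured in the \emph{cut} surface $Q$, i.e.\ a girth-type quantity at the particular parameter $t_0$ produced by the contradiction argument. Gromov--Hausdorff collapse of $Z_n$ onto an interval makes ambient cross-sectional diameters small, but it does not by itself control this cut-width at $t_0$; the only bound the construction yields is of the form $d_{\gamma_1}\p{\gamma_2\p{t_0}}\le c+\p{\alpha+7\delta_{curv}}\,length\p{Z_n}$, where $\delta_{curv}=\lvert\omega_{X_n}\rvert\p{Z_n}$. In your setup $c$ and $c/a$ tend to $0$, but $\delta_{curv}$ is only bounded by the \emph{fixed} threshold $\min\cp{\tfrac{\pi}{5},\tfrac{\delta}{4}}$ and $length\p{Z_n}\approx l$ is fixed, so the bound leaves a non-vanishing term of order $\delta_{curv}\cdot l$. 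Hence your curves $J_n$ are only trapped in a neighborhood of $\bar{J}$ of size comparable to $\delta\, l$, and ``one last subsequence'' yields Hausdorff convergence to some compact subset, not to a single point.

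The fix is the multi-scale argument that the paper's prefatory sentence (``every open subset contains some point with measure zero'') is actually for: fix $p\in I$ with $\lvert\omega\rvert_\infty\p{\cp{p}}=0$, take a nested sequence of intervals $I_m\ni p$ with $length\p{I_m}\to 0$ and $\lvert\omega\rvert_\infty\p{\overline{I_m}}\to 0$, run your chain on each $I_m$, and extract a diagonal subsequence. Then both error sources vanish simultaneously: the rotation bound $4\,\lvert\omega_{X_n}\rvert\p{Z_n^{(m)}}$ drops below $\delta$, and $diam\p{J_n^{(m)}}\lesssim c+\p{\alpha+7\delta_{curv}}\,l_m\to 0$ with all curves localizing at $p$, which is what upgrades the cross-cuts to curves Hausdorff-converging to the singleton $\cp{p}$. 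You chose a point of vanishing measure as the center of $I'$, but since you never shrink the scale, that choice does no work in your argument. A secondary, fixable mismatch: the third lemma requires \emph{both} cylinders $Z'\subset Z$ to have piecewise geodesic boundary, whereas your $Z=Z_n$ from the first lemma is bounded by components of metric spheres $\partial B_r\p{p_n}$; insert an intermediate cylinder with piecewise geodesic boundary (e.g.\ apply the second lemma twice, nested) before invoking the third lemma.
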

\noindent
Finally we are able to show Lemma \ref{lem_approx_sing_surf}. As already discussed, this is the last step to prove Proposition \ref{prop_sing_formula}.
\begin{proof}[Proof of Lemma \ref{lem_approx_sing_surf}]
We write $A_1,\ldots,A_{ind(p)}$ for the connected components of $B_r(p)\setminus\cp{p}$.\\
By Lemma \ref{lem_approx_curve} and Corollary \ref{cor_approx_point}, after passing to a subsequence, we find disjoint simple closed curves $J_{i,n}$ bounding closed topological discs in $X_n$ such that $J_{i,n}$ Hausdorff converges to some subset $J_i\subset A_i$ and $\tau\p{J_{i,n}}\ge-2\delta$ where the rotation is measured from the side of $p_n$.\\
For $1\le i\le N$ we may sharpen the construction further: By Lemma \ref{lem_approx_curve} we may assume that the subset $J_i$ is the boundary of a closed topological disc $D_i\subset A_i\cup\cp{p}$ with $p\in D_i^0$ and $\tau\p{J_{i,n}}\ge\tau\p{J_i}-\delta$ where the rotation is measured from the side of $p_n$ and $p$ respectively. Choosing $r$ small enough, we may assume that $\lvert\omega_{T_i}\rvert\p{D_i\setminus\cp{p}}\le\delta$. Then the Gauss-Bonnet theorem yields
$\tau\p{J_i}\ge\theta_{T_i}(p)-\delta$ and hence $\tau\p{J_{i,n}}\ge\theta_{T_i}(p)-2\delta$.\\
Moreover we may assume that all curves $J_{k,n}$ with $k\neq i$ lie on the side of $J_{i,n}$ that contains $p_n$. This yields that the curves bound a compact subsurface $S_n\subset X_n$ with $\partial S_n=\cup_{i=1}^{ind(p)}J_{i,n}$ and $p_n\in S_n^0$.\\
In the following we denote the simple closed curves constructed via the radius $r$ by $J_{i,n}(r)$ and the corresponding compact subsurfaces by $S_n(r)$. Choosing a diagonal sequence, we may assume that the subsets $S_n\p{\frac{1}{n}}$ Hausdorff converge to $\cp{p}$. Moreover due to Theorem \ref{trm_topo_struc_free} we may assume that $\chi\p{S_n}=2-ind(p)$.\\
This gives the construction in the case that $p$ is a cut point.\\ 
The proof for endpoints proceeds similar: For arbitrarily small $r$ the subset $A_1$ is isometric to some open interval or $A_1$ contains  infinitely many maximal cyclic subsets of $X$.\\
We start with the first case: From Corollary \ref{cor_approx_point}, after passing to a subsequence, we obtain simple closed curves $J_{1,n}\subset X_n$ bounding closed topological discs such that
$J_{1,n}$ Hausdorff converges to some point in $A_1$ and $\tau\p{J_{1,n}}\le 2\delta$ where the rotation is measured from the side of $p_n$.\\
In the second case by Lemma \ref{lem_approx_curve} we may assume the existence of such curves such that $J_{1,n}$ Hausdorff converges to the boundary of a closed topological disc in $A_1$. In addition we may assume that $\omega_{X_n}\p{J_{1,n}}\le\delta$ and $\tau\p{J_{1,n}}\ge -\delta$ where the rotation is measured from the side opposite to $p_n$. Then it follows that the rotation measured from the side of $p_n$ is at most $2\delta$.\\ 
The rest of the proof proceeds as before.
\end{proof}
\noindent
Because the measure $\lvert\omega\rvert_{\infty}$ is bounded by $C$, Theorem \ref{trm_main} is a direct consequence of Corollary \ref{cor_surf_formula} and Proposition \ref{prop_sing_formula}.
\section{A converse of the main result}
This section is devoted to the proof of Theorem \ref{trm_converse}.\\
First we note that all spaces $X\in\mathcal{L}\p{S,C}$ are semi-locally 1-connected \comp{pp. 1-2}{Dot24}.\\
A deeper analysis of the construction in \cite[p. 9]{Dot24} proves our first result. We give a brief sketch of the construction:\\
Let $\seq{T}{n}$ be an enumeration of the maximal cyclic subsets in $X$. We shrink each connected component of $\cup_{i=n+1}^{n+k}T_i$ to a point and denote the constructed space by $X_{n,k}$. After passing to a subsequence, the spaces $X_{n,k}$ Gromov-Hausdorff converge to some space $X_n$. The maximal cyclic subsets of $X_n$ are isometric to the first $n$ maximal cyclic subsets of $X$ and we have $\GH{X_n}{X}$.\\ 
From the construction we deduce the following lemma:
\begin{lem}
Let $X\in\mathcal{L}\p{S,C}$. Then $X$ can be obtained as the Gromov-Hausdorff limit of a uniformly semi-locally 1-connected sequence $X_n\in\mathcal{L}\p{S,C}$ such that $X_n$ has only finitely many maximal cyclic subsets.
\end{lem}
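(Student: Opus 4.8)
The plan is to verify directly that the spaces $X_n$ from the construction lie in $\mathcal{L}\p{S,C}$ and that $\seq{X}{n}$ is uniformly semi-locally 1-connected; the finiteness of the maximal cyclic subsets and the convergence $\GH{X_n}{X}$ are already part of the construction. First I would fix the enumeration $\seq{T}{n}$ so that, if $S\not\cong\mathbb{S}^2$, the unique maximal cyclic subset homeomorphic to $S$ is $T_1$. As the maximal cyclic subsets of $X_n$ are isometric to $T_1,\ldots,T_n$, every $X_n$ then has one maximal cyclic subset homeomorphic to $S$ and all others homeomorphic to $\mathbb{S}^2$ (all homeomorphic to $\mathbb{S}^2$ if $S\cong\mathbb{S}^2$); since each $T_i$ is a closed surface with bounded curvature, this already yields conditions 1) and 2) of Theorem \ref{trm_topo_struc} and condition 2) of Theorem \ref{trm_main}. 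Because $X_n$ consists of finitely many maximal cyclic subsets joined along a finite tree of cut points, statement 3) of Theorem \ref{trm_topo_struc} is immediate.

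The heart of the matter is that the left-hand side of the inequality in Theorem \ref{trm_main}, and a fortiori the index inequality 4) of Theorem \ref{trm_topo_struc}, does not increase under the collapsing operation. I would describe the passage from $X$ to $X_n$ by a surjection $\pi_n\colon X\to X_n$ that is an isometry on $T_1\cup\ldots\cup T_n$ and shrinks each connected component of the complementary region to a point. Using that $X_n$ carries the corresponding quotient length metric, one checks that $\pi_n$ is $1$-Lipschitz, so the area and the absolute curvature carried by each retained maximal cyclic subset away from the new cut points are controlled by those in $X$. The delicate point is that collapsing a connected component $C$ of $\cup_{i>n}T_i$ to a point $q$ simultaneously removes the curvature and singular contributions of $C$ inside $X$ and creates at $q$ a new singular point whose index-excess $2\pi\lvert ind_{X_n}(q)-2\rvert$ and surviving total angles $\theta_{T_i}(q)$ enter the expression for $X_n$. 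I would show by a local Gauss-Bonnet bookkeeping that these new contributions are dominated by what $C$ contributed in $X$: each collapsed component contains at least one spherical maximal cyclic subset and hence carries total absolute curvature at least $4\pi$, against which the excess index and the angles at $q$ can be charged. Summing over all collapsed components then gives that the full left-hand side for $X_n$ is at most that for $X$, hence at most $C$, so that $X_n\in\mathcal{L}\p{S,C}$.

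For the uniform semi-local 1-connectedness I would use that the tree structure of the cyclic decomposition destroys every loop arising from the gluing, so that $\pi_1\p{X_n}\cong\pi_1(S)$ with all generators supported in the fixed subset $T_1\cong S$. Since $T_1$ is a node of the cyclic tree, a path between two of its points cannot be shortened by leaving and re-entering $T_1$; hence the metric induced on $T_1$ inside $X_n$ coincides with its intrinsic metric and is independent of $n$, providing a single contractibility threshold $\varepsilon_S>0$ for loops contained in $T_1$. Loops inside a spherical maximal cyclic subset are contractible irrespective of their size, and a loop of small diameter meeting a cut point lies in a wedge of disc-like neighborhoods in surfaces with bounded curvature and thus bounds a disc. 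Taking $\varepsilon'$ as the minimum of $\varepsilon_S$ and a uniform contractibility radius for such cone-like neighborhoods — uniform because the bound $C$ on the curvature controls the local geometry — produces a single $\varepsilon'>0$ valid for all $n$.

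I expect the main obstacle to be the Gauss-Bonnet bookkeeping of the second step: one has to charge, uniformly over all collapsed components and all $n$, the index-excess and surviving total angles created at each collapse point against the curvature and Euler characteristic that the collapsed component carried in $X$, so that the collapse never increases the total budget. A secondary technical issue is verifying that $\pi_n$ is $1$-Lipschitz and isometric on the retained subsurfaces and establishing the no-shortcut property of cyclic-tree nodes; both follow from the tree structure of the decomposition but need care where several collapsed components accumulate at one cut point.
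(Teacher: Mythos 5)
Your outline does follow the paper's own route --- the paper likewise collapses the discarded maximal cyclic subsets and deduces the lemma from a closer analysis of the construction in \cite[p. 9]{Dot24} --- but two of your steps have genuine gaps. First, the map you define at the outset is not the right one: shrinking \emph{each connected component of the complementary region} $X\setminus\p{T_1\cup\ldots\cup T_n}$ to a point destroys the convergence $\GH{X_n}{X}$, because such a component may contain interval parts of $X$ of positive length (take $X$ to be two spheres joined by a segment: for every $n\ge 2$ your quotient is the wedge of the two spheres, at a fixed positive Gromov--Hausdorff distance from $X$). What may be collapsed are only the connected components of unions of the discarded maximal cyclic subsets, as you yourself do later when you work with a component $C$ of $\cup_{i>n}T_i$; note also that the paper performs this in finite stages $X_{n,k}$, collapsing components of $\cup_{i=n+1}^{n+k}T_i$, and then passes to a limit in $k$, since $\cup_{i>n}T_i$ is an infinite and in general non-closed union, so the one-step quotient you describe needs separate justification.

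Second, and more seriously, your central charging scheme is insufficient as stated. A single collapsed component $C$ can produce a point $q$ with $ind_{X_n}(q)=m$ arbitrarily large, so the created excess $2\pi\p{m-2}$ cannot be charged against a flat $4\pi$ per component. The correct bookkeeping is forced and essentially tight: if $C$ is one small sphere $T$ with $m$ arcs attached at cut points of total angles $\theta_1,\ldots,\theta_m$, the Gauss--Bonnet theorem gives $\omega_T\p{T\setminus Cut_X}=4\pi-2\pi m+\sum_a\theta_a$, so for small $\theta_a$ the available budget $\lvert\omega_T\rvert\p{T\setminus Cut_X}+\sum_a\theta_a$ is asymptotically exactly $2\pi\p{m-2}$; near-equality can occur, and for $m>4$ your $4\pi$ is simply too small. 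The estimate therefore has to be run sphere-by-sphere through Gauss--Bonnet and summed over the tree of spheres constituting $C$, keeping track of the index-excess and angle terms that disappear at the cut points absorbed into $q$, and of the conversion of an angle term into a curvature term when the attachment point of a pendant component ceases to be a cut point of $X_n$: there $\theta_{T_j}(p)$ is replaced by the mass $\lvert 2\pi-\theta_{T_j}(p)\rvert$, a cost of up to $2\pi$ that must be paid by the pendant sphere, for which $\omega_T\p{T\setminus\cp{p}}=2\pi+\theta_T(p)$. Without this finer accounting the claim $X_n\in\mathcal{L}\p{S,C}$ is not established. A smaller point: the uniform threshold $\varepsilon'$ does not come from a curvature-controlled contractibility radius of cone-like neighborhoods, which you neither define nor justify; it comes from the fact that every spherical cyclic part and every branch of $X_n$ is simply connected, so a loop is non-contractible only if its (1-Lipschitz) retraction onto the fixed factor $T_1\cong S$ is, whence the semi-local 1-connectedness constant of $T_1$ alone serves for all $n$ (and when $S$ is homeomorphic to $\mathbb{S}^2$, $X_n$ is simply connected and any $\varepsilon'$ works).
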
 
\noindent 
In the following we write $L_{k,n}$ for the metric space obtained by gluing $k$ disjoint copies of $\left[0,2^{-n}\right]$ along $0$.\\ 
A space $X\in \mathcal{L}\p{S,C}$ having only finitely many maximal cyclic subsets is a successive metric wedge sum of its maximal cyclic subsets and finitely many compact intervals. Replacing every wedge point $p\in X$ with $L_{ind(p),n}$ and then slightly moving the intervals, we derive the following result: 
\begin{lem}\label{lem_approx_finite}
Let $X$ be a space in $\mathcal{L}\p{S,C}$ having only finitely many maximal cyclic subsets. Then $X$ can be obtained as the Gromov-Hausdorff limit of spaces $X_n\in \mathcal{L}\p{S,C}$ satisfying the following properties:
\begin{itemize}
\item[1)] $X_n$ is a successive metic wedge sum of the maximal cyclic subsets of $X$ and finitely many compact intervals.
\item[2)] Every wedge point is the endpoint of some of the intervals. Moreover it lies in some maximal cyclic subset of $X_n$ or the interior of some of the intervals.
\item[3)] Every wedge point lies in exactly two of the wedged spaces.
\item[4)] The maximal cyclic subsets of $X_n$ do not intersect. 
\end{itemize}
\noindent
The upcoming proposition provides a "tool" to simplify the approximating spaces further.
\begin{prop}\label{prop_approx_wedge_sum}
Let $S_1$ and $S_2$ be closed surfaces with bounded curvature and $X$ be a metric wedge sum of them. We further assume that every loop in $S_1$ of diameter at most $\varepsilon$ is contractible in $S_1$ and that $S_2$ is homeomorphic to $\mathbb{S}^2$. Moreover let $F\coloneqq\cp{p_1,\ldots,p_k}\subset X\setminus\cp{p}$ where $p$ denotes the wedge point of $X$. Then $X$ can be obtained as the Gromov-Hausdorff limit of surfaces with bounded curvature $X_n$ satisfying the following properties:
\begin{itemize}
\item[1)] $X_n$ is homeomorphic to $S_1$. 
\item[2)] We have
\begin{align*}
\lim\limits_{n\to\infty} \lvert\omega_{X_n}\rvert\p{X_n}\le\sum_{i=1}^{2}\lvert\omega_{S_i}\rvert\p{S_i\setminus\cp{p}}+\theta_{S_i}(p).
\end{align*}
\item[3)] There are isometries $f_i\colon S_i\to S_i$ and points $p_{j,n}\in X_n$ such that $\conv{p_{j,n}}{f_i\p{p_j}}$ and $\theta_{X_n}\p{p_{j,n}}=\theta_{S_i}\p{p_j}$ for every $p_j\in S_i$. 
\item[4)] Every loop in $X_n$ of diameter at most $\frac{\varepsilon}{2}$ is contractible in $X_n$. 
\end{itemize}
\end{prop}
\begin{proof}
Proceeding as in the proof of 
Theorem \ref{trm_length_surf_curv}, we find convex closed topological discs $D_{i,n}\subset S_i\setminus F$ satisfying the following properties: The point $p$ lies in the interior of the disc, $J_{i,n}\coloneqq \partial D_{i,n}$ is free of peak points and the absolute rotation of $J_{i,n}$, measured from the side of $S_i\setminus D_{i,n}$, \comp{pp. 272, 308}{AZ67} is bounded by some constant that does not depend on $n$. Moreover we have $\lim\limits_{n\to\infty}l_{i,n}\coloneqq length\p{J_{i,n}}=0$, $r_n\coloneqq l_{2,n}-l_{1,n}>0$ and $\lim\limits_{n\to\infty}diam\p{D_{i,n}}=0$.\\
From the Gauss-Bonnet theorem we derive $\tau\p{J_{i,n}}\le\theta_{S_i}(p)+\lvert\omega_{S_i}\rvert\p{D_{i,n}^0\setminus\cp{p}}$ where the rotation is measured from the side of $p$.\\
Let now $T_n\subset\mathbb{R}^2$ be an isosceles trapezoid such that its basis has the lengths $l_{1,n}$ and $l_{2,n}$ and its hight is equal to $\sqrt{r_n}$. Moreover let $Z_n$ be the topological cylinder obtained by gluing the legs of $T_n$ along an isometry. We denote the boundary components of this cylinder by $b_{1,n}$ and $b_{2,n}$ where $length\p{b_{i,n}}=l_{i,n}$.\\ Furthermore we equip $X_{i,n}\coloneqq S_i\setminus D_{i,n}^0$ with its induced length metric and glue the disjoint union $X_{1,n}\sqcup Z_n\sqcup X_{2,n}$ along length preserving homeomorphisms $f_i\colon J_{i,n}\to b_{i,n}$. We denote the constructed space by $X_n$.\\
It follows $\GH{X_n}{X}$ and that $X_n$ is homeomorphic to $S_1$.\\ From the gluing theorem in \cite[p. 289]{AZ67} we derive that $X_n$ is a surface with bounded curvature. In addition the theorem implies that, after passing to a subsequence, the desired inequality for $\lim\limits_{n\to\infty}\lvert\omega_{X_n}\rvert\p{X_n}$ holds.\\ Moreover we may assume the points $p_{j,n}\coloneqq p_j\in X_n$ to be convergent and we find isometries $f_i\colon S_i\to S_i$ such that $\conv{p_{j,n}}{f_i\p{p_j}}$ for every $p_j\in S_i$. In particular, we have  $\theta_{X_n}\p{p_{j,n}}=\theta_{S_i}\p{p_j}$.\\
From an argument in \cite[p. 14]{Dott24b} we see that every simple closed curve in $X_n$ is arbitrarily Hausdorff close and homotopic to some simple closed curve that intersects $J_{1,n}$ only finitely many times. Since every loop in $S_1$ of diameter at most $\varepsilon$ is contractible in $S_1$, after passing to a subsequence, the same applies to $X_n$ with the constant $\frac{\varepsilon}{2}$. 
\end{proof}
\end{lem}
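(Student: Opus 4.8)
The plan is to turn the wedge sum into a genuine surface by excising a tiny disc around the wedge point $p$ from each factor and joining the two resulting boundary circles through a short flat tube. Because $S_2$ is a sphere, deleting a disc from it leaves a disc; hence capping $S_1\setminus\p{\text{disc}}$ through an annulus and a disc reproduces a space homeomorphic to $S_1$, which already yields property 1). The curvature and the total angles are then forced to behave correctly by a careful choice of the tube and by the gluing theorem in \cite[p. 289]{AZ67}.

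Proceeding as in the proof of Theorem \ref{trm_length_surf_curv}, I first produce convex closed topological discs $D_{i,n}\subset S_i\setminus F$ with $p\in D_{i,n}^0$, boundaries $J_{i,n}\coloneqq\partial D_{i,n}$ free of peak points and of uniformly bounded absolute rotation, and with $\conv{l_{i,n}\coloneqq length\p{J_{i,n}}}{0}$ and $\conv{diam\p{D_{i,n}}}{0}$; I arrange $l_{2,n}>l_{1,n}$ and set $r_n\coloneqq l_{2,n}-l_{1,n}>0$. Applying the Gauss--Bonnet theorem to the disc $D_{i,n}$ and using $\theta_{S_i}(p)=2\pi-\omega_{S_i}(p)$ gives
\begin{align*}
\tau\p{J_{i,n}}\le\theta_{S_i}(p)+\lvert\omega_{S_i}\rvert\p{D_{i,n}^0\setminus\cp{p}}
\end{align*}
with the rotation measured from the side of $p$. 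Next I build the tube $Z_n$ by gluing the legs of an isosceles trapezoid $T_n\subset\mathbb{R}^2$ with parallel sides of length $l_{1,n}$ and $l_{2,n}$ and height $\sqrt{r_n}$; the resulting flat cylinder has boundary circles $b_{i,n}$ of length $l_{i,n}$. The decisive point is the height: as $\conv{r_n}{0}$ the base angles of $T_n$ tend to $\frac{\pi}{2}$, so the rotations of $b_{1,n}$ and $b_{2,n}$ measured from inside $Z_n$ both tend to $0$.

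I then equip $X_{i,n}\coloneqq S_i\setminus D_{i,n}^0$ with its induced length metric and glue $X_{1,n}\sqcup Z_n\sqcup X_{2,n}$ along length-preserving homeomorphisms $J_{i,n}\to b_{i,n}$ to obtain $X_n$; since the discs and the tube collapse to $p$, it follows that $\GH{X_n}{X}$. The gluing theorem guarantees that $X_n$ is a surface with bounded curvature and identifies the curvature along each seam $\sigma_i$ as the sum of the two boundary rotations, $\omega_{X_n}\p{\sigma_i}=\tau^{X_{i,n}}\p{J_{i,n}}+\tau^{Z_n}\p{b_{i,n}}$. The interior contributions converge to $\lvert\omega_{S_i}\rvert\p{S_i\setminus\cp{p}}$, while on each seam the first summand converges to $-\theta_{S_i}(p)$ (up to the curvature carried by $J_{i,n}$, which vanishes) and the second to $0$; taking absolute values yields the estimate in property 2). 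For property 3), $F$ is disjoint from $p$ and the discs shrink to $p$, so for large $n$ each $p_j$ lies in the unaltered region $X_{i,n}$; putting $p_{j,n}\coloneqq p_j$ preserves a neighborhood isometrically, hence $\theta_{X_n}\p{p_{j,n}}=\theta_{S_i}\p{p_j}$, and under the convergence $\conv{p_{j,n}}{f_i\p{p_j}}$ for suitable self-isometries $f_i$. Property 4) follows because all topology of $X_n$ stems from $S_1$: by the curve-straightening argument of \cite[p. 14]{Dott24b} a small loop can be homotoped to meet the seam $J_{1,n}$ only finitely often, after which the contractibility of $\varepsilon$-small loops in $S_1$ forces contractibility of $\frac{\varepsilon}{2}$-small loops in $X_n$, the factor absorbing the distortion introduced near $p$.

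I expect property 2) to be the main obstacle. Everything rests on engineering the tube so that the boundary rotations of $Z_n$ vanish in the limit---this is what makes each seam pick up exactly the angle $\theta_{S_i}(p)$ and nothing more---and on being able to apply the gluing theorem, which in turn requires the uniform bound on the absolute rotations of the curves $J_{i,n}$ obtained from the construction of Theorem \ref{trm_length_surf_curv} in order to keep the total absolute curvature controlled.
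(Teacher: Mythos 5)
Your proposal is correct and follows essentially the same route as the paper's proof of Proposition \ref{prop_approx_wedge_sum}: the same convex discs $D_{i,n}$ from the construction of Theorem \ref{trm_length_surf_curv}, the same Gauss--Bonnet estimate, the same trapezoidal tube of height $\sqrt{r_n}$, the gluing theorem of \cite[p. 289]{AZ67} for bounded curvature and the total-curvature bound, and the argument of \cite[p. 14]{Dott24b} for property 4). You even make explicit what the paper leaves implicit, namely that the height $\sqrt{r_n}$ forces the base angles of $T_n$ to tend to $\frac{\pi}{2}$ so that the boundary rotations of $Z_n$ vanish in the limit and each seam contributes exactly $\theta_{S_i}(p)$.
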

\noindent
The construction in our next proof uses flat cylinders in $\mathbb{R}^3$. Therefore we introduce the following notation:
\begin{align*}
&\bar{Z}_{n,l}\coloneqq\cp{\p{x,y,z}\in\mathbb{R}^3\colon x^2+y^2=2^{-n},\ 0\le \lvert z\rvert\le\frac{l}{2}},\\
&D_{n,l}\coloneqq\cp{\p{x,y,z}\in\mathbb{R}^3\colon x^2+y^2\le 2^{-n},\ \lvert z\rvert=\frac{l}{2}}.
\end{align*}
Moreover we equip $Z_{n,l}\coloneqq\bar{Z}_{n,l}\cup D_{n,l}$ with its induced length metric.\\ 
We state the final approximation step:
\begin{lem}
Let $X$ be a space as in the approximating sequence from Lemma \ref{lem_approx_finite} such that every loop in $X$ of diameter at most $\varepsilon$ is contractible in $X$. Then $X$ can be obtained as the Gromov-Hausdorff limit of spaces $X_n\in\mathcal{M}\p{S,\frac{\varepsilon}{2},C_n}$ where $\lim\limits_{n\to\infty} C_n\le C$.
\begin{proof}
We denote the wedged intervals by $I_1,\ldots,I_k$ and define $l_q\coloneqq length(I_q)$. Next we cut out every $I_q$ and glue in the flat cylinder $Z_{n,l_q}$ instead. The gluing proceeds along the points $\p{0,0,\pm\frac{l_q}{2}}$. We denote the constructed space by $Y_n$.\\
Now we successively "repair" the wedge points $p\in Y_n$ by applying the construction from the proof of Proposition \ref{prop_approx_wedge_sum}. However, we have one more rule to follow: If $p$ lies in the "lid" of some glued-in cylinder, we choose the curve $J_{i,n}$ from the aforementioned construction as the boundary of the "lid".\\
After each wedge point has been eliminated, we end up with a new space $X_n$. In particular, we have $\GH{X_n}{X}$. Finally the proof of Proposition \ref{prop_approx_wedge_sum} yields that, after passing to a subsequence, we have $X_n\in\mathcal{M}\p{S,\frac{\varepsilon}{2},C_n}$ where $\lim\limits_{n\to\infty} C_n\le C$.
\end{proof}
\end{lem}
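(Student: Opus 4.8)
The plan is to convert the wedge sum $X$ into a genuine closed surface in two stages: first I would thicken the one-dimensional pieces into thin two-dimensional necks, and then I would remove the resulting wedge singularities by means of the gluing construction of Proposition \ref{prop_approx_wedge_sum}.

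For the first stage I would replace every wedged interval $I_q$, of length $l_q\coloneqq length\p{I_q}$, by the capped flat cylinder $Z_{n,l_q}$. Since the radius $2^{-n}$ of this cylinder tends to zero, $Z_{n,l_q}$ Gromov-Hausdorff converges to a segment of length $l_q$; carrying out all of these replacements simultaneously therefore produces a space $Y_n$ with $\GH{Y_n}{X}$. By construction $Y_n$ is a metric wedge sum of the maximal cyclic subsets of $X$, which are closed surfaces with bounded curvature, together with the capped cylinders, which are topological $2$-spheres of bounded curvature whose curvature is concentrated on the two circles along which each lid is attached to its tube. Thus, away from the finitely many wedge points, $Y_n$ is already a surface with bounded curvature.

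In the second stage I would remove the wedge points by the local gluing of Proposition \ref{prop_approx_wedge_sum}. At a wedge point $p$ at which a sphere-type piece meets an adjacent piece $S_1$, the proposition merges the two into one surface homeomorphic to $S_1$, controlling the added curvature and forcing every loop of diameter at most $\frac{\varepsilon}{2}$ to be contractible. Because the discs $D_{i,n}$ used in that construction shrink to the wedge points as $n\to\infty$, the repairs at distinct wedge points are supported in disjoint neighborhoods, so they can be performed within a single space $X_n$ without the contractibility threshold degrading below $\frac{\varepsilon}{2}$. The one additional rule is the "lid rule": when $p$ lies on a flat lid of a glued-in cylinder, I would take the curve $J_{i,n}$ to be the boundary circle of that lid, making the repair compatible with the neck. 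Iterating over all wedge points and invoking Theorem \ref{trm_topo_struc} — by which each sphere-type piece is absorbed into the distinguished $S$-piece, or, when $S\cong\mathbb{S}^2$, into another sphere — produces a genuine closed surface $X_n$ of bounded curvature that is homeomorphic to $S$, satisfies $\GH{X_n}{X}$, and lies in $\mathcal{M}\p{S,\frac{\varepsilon}{2},C_n}$.

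Finally I would track the total absolute curvature. Each application of Proposition \ref{prop_approx_wedge_sum} contributes at most the interior absolute curvatures of the two merged pieces together with their total angles at the wedge point, while capping an endpoint and branching at a point of higher index each cost exactly the topological amount $2\pi\lvert ind_X(p)-2\rvert$. Summing these contributions over all pieces and all wedge points shows that $\lim\limits_{n\to\infty}C_n$ is bounded by the three-term sum that defines $\mathcal{L}\p{S,C}$ in Theorem \ref{trm_main}, which is at most $C$; hence $\lim\limits_{n\to\infty}C_n\le C$. The step I expect to be the main obstacle is precisely this curvature bookkeeping: one must verify that the thickening-and-repair procedure creates no curvature beyond those three terms — in particular that the flat necks are genuinely curvature-free and that the topological cost at each wedge point of index $k$ is exactly $2\pi\p{k-2}$, respectively $2\pi$ at an endpoint — so that the bound matches the defining inequality of $\mathcal{L}\p{S,C}$ term by term.
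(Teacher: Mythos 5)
Your proposal follows essentially the same route as the paper's proof: replace each wedged interval $I_q$ by the capped flat cylinder $Z_{n,l_q}$ to obtain $Y_n$, then successively repair the wedge points via the construction of Proposition \ref{prop_approx_wedge_sum} with the same ``lid rule,'' concluding $\GH{X_n}{X}$ and $X_n\in\mathcal{M}\p{S,\frac{\varepsilon}{2},C_n}$ with $\lim\limits_{n\to\infty}C_n\le C$. Your explicit curvature bookkeeping (the $2\pi$ cost of each cap concentrated on its attaching circle, matching the $2\pi\lvert ind_X(p)-2\rvert$ terms, plus the total-angle contributions) is exactly what the paper delegates implicitly to the proof of Proposition \ref{prop_approx_wedge_sum}; only your appeal to Theorem \ref{trm_topo_struc} for the homeomorphism type is superfluous, since statement 1) of that proposition already provides it.
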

\noindent 
We note that $X_n$ can be obtained as the uniform limit of spaces in $\mathcal{R}\p{S,C_{n,k}}$ where $\lim\limits_{k\to\infty} C_{n,k}\le C_n$ \comp{p. 144}{Res93}.
Hence Theorem \ref{trm_converse} is a direct consequence of the lemmas in this subsection.
\bibliography{literature}
\bibliographystyle{abbrv}
\footnotesize{\textsc{Institute of Algebra and Geometry, Karlsruhe Institute of Technology, 76131 Karlsruhe, Germany}\\
\emph{E-mail}: \url{tobias.dott@kit.edu}
\end{document}